\newtheorem{theorem}{Theorem}[section]
\newtheorem{proposition}[theorem]{Proposition}
\newtheorem{lemma}[theorem]{Lemma}
\newtheorem{corollary}[theorem]{Corollary}
\newtheorem*{question}{Question}
\theoremstyle{definition}
\newtheorem{definition}[theorem]{Definition}
\newtheorem{remark}[theorem]{Remark}
\DeclareMathOperator{\End}{End}
\DeclareMathOperator{\Hom}{Hom}
\DeclareMathOperator{\Tr}{Tr}
\DeclareMathOperator{\Tw}{Tw}
\DeclareMathOperator{\Pic}{Pic}
\DeclareMathOperator{\rk}{rk}
\DeclareMathOperator{\Supp}{Supp}
\DeclareMathOperator{\charpoly}{char}
\DeclareMathOperator{\Sec}{Sec}
\newcommand{\bigslant}[2]{{\raisebox{.2em}{$#1$}\left/\raisebox{-.2em}{$#2$}\right.}}
\newcommand{\hooklongrightarrow}{\lhook\joinrel\longrightarrow}
\numberwithin{equation}{section}
\begin{document}

\title[Vector bundles over hyperk\"ahler twistor spaces]{On vector bundles
over hyperk\"ahler twistor spaces}

\author[I. Biswas]{Indranil Biswas}

\address{School of Mathematics, Tata Institute of Fundamental
Research, Homi Bhabha Road, Mumbai 400005, India}

\email{indranil@math.tifr.res.in}

\author[A. Tomberg]{Artour Tomberg}

\address{Department of Mathematics, Western University, Middlesex College, London, Ontario, Canada, N6A 5B7}
\address{Also: Faculty of Mathematics, National Research University Higher School of Economics, 6 Usacheva St., 
Moscow, Russia, 119048}

\email{artour@tomberg.com}

\subjclass[2010]{32L25, 53C28, 32L10}

\keywords{Hyperk\"ahler manifold, twistor space, stability, twistor line, holomorphic connection.}

\date{}

\begin{abstract}
We study the holomorphic vector bundles $E$ over the twistor space $\Tw(M)$ of a compact simply
connected hyperk\"ahler manifold $M$. We give a characterization of the semistability condition
for $E$ in terms of its restrictions to the holomorphic sections of the holomorphic twistor projection
$\pi \,:\, \Tw(M)\,\longrightarrow\, \mathbb{CP}^1$. It is shown that if $E$ admits a holomorphic
connection, then $E$ is holomorphically trivial and the holomorphic connection on $E$ is trivial
as well. For any irreducible vector
bundle $E$ on $\Tw(M)$ of prime rank, we prove that its restriction to the generic fibre
of $\pi$ is stable. On the other hand, for a K3 surface $M$, we construct examples of irreducible vector
bundles of any composite rank on $\Tw(M)$ whose restriction to every fibre of $\pi$ is non-stable.
We have obtained a new method of constructing irreducible vector bundles on hyperk\"ahler 
twistor spaces; this method is employed in constructing these examples.
\end{abstract}

\maketitle

\tableofcontents

\section{Introduction}

Twistor theory was introduced by Penrose \cite{penrose} in the 1960s as a way of relating physical fields on 
Minkowski space-time to complex analytic objects on the projective space $\mathbb{CP}^3$. The idea has proved
to be rich not only for theoretical physics, but for mathematics as well, in particular for the study of the
geometry of 4-manifolds. Given an oriented Riemannian 4-manifold $X$, one can associate to it its twistor
space $Z$, which is a 6-manifold with an almost complex structure. This almost complex structure on $Z$
is integrable precisely if the Riemannian metric on $X$ is self-dual \cite{AHS}. In the situation
where the almost complex structure on $Z$ is integrable, a version of the twistor correspondence
relates the conformal geometry of $X$ with the complex analytic geometry of $Z$.

The twistor theory of oriented Riemannian 4-manifolds can be generalized to the hyperk\"ahler setting, where it 
takes the following form. We first recall that a Riemannian manifold $M$ is called hyperk\"ahler if it has a triple of 
integrable almost complex structures $I,\, J,\, K$, which are parallel with respect to the Levi-Civita connection 
of $M$ and satisfy the quaternionic relations $I^2 \,=\, J^2 \,=\, K^2 \,=\, -\text{Id}$, $IJ \,=\, -JI \,=\, K$. 
Such a manifold $M$ is endowed with a 2-sphere $S^2\,=\, \{(a,b,c)\, \in\, {\mathbb R}^3\,\mid\, a^2+b^2+c^2\,=\,1\}$
of induced complex structures, given by linear combinations $aI + bJ + cK$, where $(a,\, b,\, c)\, \in\, S^2$.
In this situation, the twistor space of $M$, which we denote by $\Tw(M)$, is a Hermitian manifold which is
canonically diffeomorphic (but not biholomorphic) to the Cartesian
product $M\times {{}S}^2$, and thus has natural projections
\begin{equation}\label{twist-proj}
\xymatrix{& \Tw(M) \ar[rd]^\pi \ar[dl]_\sigma & \\ M & & \mathbb{CP}^1,}
\end{equation}
the second of which is a holomorphic map, while the fibres of $\sigma$ are complex submanifolds identified with
$\mathbb{CP}^1$. It should be clarified that $\sigma$ is \textit{not} holomorphic.

In this setting, the holomorphic structure of the complex manifold $\Tw(M)$ completely encodes the quaternionic 
structure of the hyperk\"ahler manifold $M$. For example, $M$ can be recovered from $\Tw(M)$ (see Theorem 1 in 
\cite{hitchin2}). More generally, there is a version of the twistor correspondence (see Theorem 5.12 in 
\cite{kaled-verbit}) which associates to every vector bundle on $M$, with a connection whose curvature has type 
$(1,\, 1)$ with respect to any of the complex structures parametrized by $S^2$, a holomorphic vector bundle on 
$\Tw(M)$ whose restrictions to all the fibres of the projection $\sigma$ in the diagram \eqref{twist-proj} are 
trivial, and vice versa. This bijective correspondence leads to an identification of the corresponding moduli 
spaces, and in this way studying vector bundles on the twistor space $\Tw(M)$ can help in our understanding of the 
geometry of the original manifold $M$.

In the present paper, we pursue this idea further and study holomorphic vector bundles $E$ on the twistor space 
$\Tw(M)$ of a compact simply connected hyperk\"ahler manifold $M$, in particular, we
investigate the relationship between their 
stability and the stability of their restrictions to the fibres of the projections $\sigma$ and $\pi$ in the 
diagram \eqref{twist-proj}.

The fibres of $\sigma\, :\, \Tw(M) \,\longrightarrow\, M$ in $\Tw(M)$ are called horizontal twistor lines; more 
generally, holomorphic sections of $\pi$ are called twistor lines in $\Tw(M)$. We show that the semistability of 
$E$ on $\Tw(M)$ can be related to the ``semistability'' of its restrictions to the twistor lines in $\Tw(M)$. More 
precisely, we show that if $E$ restricts semi-stably to the image of some twistor line $s\, :\, \mathbb{CP}^1\, 
\longrightarrow\, \Tw(M)$, then $E$ itself is semistable. On the other hand, if $E$ is semistable, then for some 
twistor line $s\, :\, \mathbb{CP}^1\, \longrightarrow\, \Tw(M)$, either the restriction $s^*E$ is semistable, or 
the slopes of the associated graded components of the Harder--Narasimhan filtration
$$
0\,=\, {\mathcal E}_0\, \subsetneq\, {\mathcal E}_1\, \subsetneq\, {\mathcal E}_2\, \subsetneq\,
\cdots\, \subsetneq\, {\mathcal E}_{n-2}\, \subsetneq\, {\mathcal E}_{n-1}\, \subsetneq\,
{\mathcal E}_n\,=\, s^*E
$$
of $s^*E$ satisfy the condition
$\mu({\mathcal E}_i/{\mathcal E}_{i-1}) \,=\, \mu({\mathcal E}_{i+1}/{\mathcal E}_{i}) +1$ for all $1\, \leq\, i\,\leq\, n-1$ (Theorem \ref{thm1}). We also show that if $E$ is a holomorphic bundle on $\Tw(M)$ admitting a holomorphic connection $D$, then $E$ is holomorphically trivial, and $D$ is the trivial connection (Proposition \ref{prop1}).

Concerning the restrictions of a holomorphic bundle $E$ on $\Tw(M)$ to the fibres of the twistor projection $\pi\, 
:\, \Tw(M) \,\longrightarrow\, \mathbb{CP}^1$, a result of Kaledin and Verbitsky
shows that if $E$ restricts stably to the generic fibre of $\pi$, then it is an irreducible 
bundle on $\Tw(M)$, in the sense that it does not have any nonzero proper subsheaf of lower rank
(see Lemma 7.3 in \cite{kaled-verbit}). In the paper \cite{tomberg3}, the second author proved a
partial converse to this result (see Theorem \ref{thm:result} in the present article, which is Theorem 4.1 in \cite{tomberg3}), while the following question was posed about the full converse:

\begin{question}[{\cite[p.~2]{tomberg3}}]
Given an irreducible bundle $E$ on the twistor space $\Tw(M)$, will it always be stable on the generic
fibre of the twistor projection $\pi\,:\, \Tw(M) \,\longrightarrow\, \mathbb{CP}^1$?
\end{question}

Note that if we replace ``irreducible'' by ``stable'', the answer is negative: in \cite{tomberg2}, an example of a stable (but not irreducible) bundle on $\Tw(M)$ with non-stable restrictions 
to the fibres of $\pi$ was constructed.

In the present article, we prove that an irreducible $E$ of prime rank on $\Tw(M)$ does restrict stably to the generic fibre of $\pi\,:\, \Tw(M) \,\longrightarrow\, \mathbb{CP}^1$ (Theorem \ref{thm:prime}). However, we also show that for $M$ a K3 surface, there are examples of bundles $E$ on $\Tw(M)$ of any composite
rank which are irreducible but whose restrictions to all the fibres of $\pi$ are non-stable (see Theorem 
\ref{thm:composite}). This settles the question above in the negative, and also strengthens the result of 
\cite{tomberg2}.

The proof of Theorem \ref{thm:prime} gives a new method of constructing irreducible bundles on the twistor space $\Tw(M)$. The significance of this comes from the fact that irreducible bundles (which only exist on nonalgebraic manifolds) are notoriously difficult to study, and the main difficulty is in fact a lack of general mechanisms of constructing such bundles. There are only specific methods for particular classes of manifolds; for the case of surfaces, see \cite{toma, ABT,teleman-toma,brin-mor}. Our proof gives a new method of constructing irreducible bundles on a 3-dimensional complex manifold, namely the twistor space $\Tw(M)$ of a K3 surface $M$.

\section{Preliminaries} \label{sect:preliminaries}

We begin by giving the definitions of the basic objects that we shall be working with, and also recalling the
results that will be used in the subsequent sections.

\begin{definition}\label{hyperk}
A \emph{hyperk\"ahler} structure on a smooth manifold $M$ consists of a triple of integrable almost
complex structures $I,\, J,\, K\, :\, TM \,\longrightarrow\, TM$ satisfying
\[
I^2 \,=\, J^2 \,=\, K^2 \,=\, -\text{Id},\ \ IJ \,=\, -JI \,=\, K,
\]
together with a Riemannian metric $g$ on $M$ which is simultaneously K\"ahler with respect to $I,\, J,\, K$.
\end{definition}

Together with the identity mapping, $I,\, J,\, K$ induce an action of the quaternion algebra
$\mathbb{H}$ on the smooth tangent bundle $TM$, which is moreover parallel with respect to the Levi-Civita connection
on $M$
associated to the Riemannian metric $g$. Any linear combination $A = aI + bJ + cK$, where $(a,\, b,\, c)\, \in\, {\mathbb R}^3$ with
$a^2 + b^2 + c^2 \,=\, 1$, is an endomorphism of the tangent bundle $TM$ satisfying $A^2 \,=\,
-\text{Id}_{TM}$, and thus defines an almost complex structure on $M$. This almost complex structure $A$
is actually integrable and the metric $g$ is again K\"ahler with respect to this
complex structure. In this way, we get a family of \emph{induced complex structures}
\[
{{}S}^2 \,=\, \left\{aI + bJ + cK\, \mid\, a^2 + b^2 + c^2 \,=\, 1 \right\}
\]
on $M$ parametrized by $S^2$. Consider the (topological) product manifold $\Tw(M) := M \times {{}S}^2$. For every
point $$(m,\, A) \,\in\, \Tw(M) \,=\, M \times {{}S}^2\, ,$$ we have the tangent space decomposition
$$T_{(m,A)} \Tw(M)\,=\, T_m M \bigoplus T_A {{}S}^2\, .$$ Identifying ${{}S}^2$ with $\mathbb{CP}^1$ using the
stereographic projection from $(0,\, 0,\, 1)$, we have the almost complex structure
$I_{{{}S}^2} \,: \,T {{}S}^2
\,\longrightarrow\, T {{}S}^2$ on ${{}S}^2$, while any $A\,=\, (a,\, b,\, c)\, \in\, {{}S}^2$ itself defines
the almost complex structure $A\,=\, aI + bJ + cK\, :\, T M \,\longrightarrow\,
T M$ on $M$ mentioned earlier. The operator $$\mathcal{I} \,:\, T\Tw(M) \,\longrightarrow\,
T\Tw(M)\, ,$$ which at the point $(m,\, A)$ is the direct sum $A(m)\bigoplus I_{{{}S}^2}(A)$, satisfies
the equation ${{}\mathcal{I}}^2 \,=\, -\text{Id}_{T\Tw(M)}$, and thus defines an almost complex structure on $\Tw(M)$. It can be
shown that $\mathcal{I}$ is actually integrable \cite{sal}.

\begin{definition}\label{twistor}
The above complex manifold $(\Tw(M),\, \mathcal{I})$ is called the \emph{twistor space} of the hyperk\"ahler manifold $M$.
\end{definition}

Thinking of ${{}S}^2 \cong \mathbb{CP}^1$ as the set of induced complex structures of $M$ as above, the twistor space $\Tw(M)$ parametrizes these structures at points of $M$. We have the canonical projections
\begin{equation}\label{dpi}
\xymatrix{& \Tw(M) \ar[rd]^\pi \ar[dl]_\sigma & \\ M & & \mathbb{CP}^1.}
\end{equation}
With the complex structure of $\Tw(M)$ described above, it is easy to verify that the second projection $\pi \,:\, \Tw(M) \,\longrightarrow\,
\mathbb{CP}^1$ is a holomorphic map. Holomorphic sections of $\pi$ will be called the \emph{twistor lines}, while
the constant sections of the form
$$I \,\longmapsto\, (m,\, I) \,\in\, M \times \mathbb{CP}^1 \,= \,\Tw(M)\, ,$$ where $m \in M$, will be
called the horizontal twistor lines. The hyperk\"ahler metric $g$ on $M$ and the Fubini-Study metric
$g_{\mathbb{CP}^1}$ on $\mathbb{CP}^1$ together produce
a natural Hermitian metric
\[
\sigma^*(g) + \pi^*\left(g_{\mathbb{CP}^1}\right)
\]
on $\Tw(M)$.
The Hermitian metric on $\Tw(M)$ thus obtained is not K\"ahler but satisfies the weaker property of being 
\emph{balanced} (see \cite{kaled-verbit}), i.e., its Hermitian form $\omega$ satisfies
the equation $d\left(\omega^{n-1}\right) = 
0$, where $n$ is the complex dimension of $\Tw(M)$ (which is clearly $\frac{\dim_{\mathbb R}M}{2}+1$).

From now on, the original complex structures $I,\, J,\, K$ will not play any vital role. We will denote an 
arbitrary induced complex structure on $M$ by $I$, and the resulting complex manifold by $M_I$. As noted above, $g$ 
is a K\"ahler metric on $M_I$. These $M_I$ are precisely the fibres of the holomorphic twistor projection $\pi \,:\, 
\mathrm{Tw}(M) \,\longrightarrow\, \mathbb{CP}^1$, and it will be useful to think of $\Tw(M)$ as the collection of 
K\"ahler manifolds $M_I$ lying above the points $I \,\in\, \mathbb{CP}^1$ via the map $\pi$. From now on, we shall 
assume throughout the article that $M$ is compact.

Recall that a hyperk\"ahler manifold $M$ has an action of the quaternion algebra $\mathbb{H}$ on its tangent 
bundle, which is parallel with respect to the Levi-Civita connection for its hyperk\"ahler metric $g$. Restricting to the group of 
unitary quaternions in $\mathbb{H}$, we get an action of ${\rm SU}(2)$ on $TM$, hence also on the bundle of 
differential forms ${{}\Omega}_M^*$. Since the action is parallel, it commutes with the Laplace operator, and thus 
preserves harmonic forms. Applying Hodge theory, we get a natural action of ${\rm SU}(2)$ on the cohomology 
${{}H}^*(M,\,\mathbb{C})$.

\begin{lemma}\label{thm:invariant}
A differential form $\eta$ on a hyperk\"ahler manifold $M$ is ${\rm SU}(2)$-invariant if and only if it is of Hodge
type $(p,\,p)$ with respect to all induced complex structures $M_I$.
\end{lemma}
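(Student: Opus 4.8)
The plan is to reduce the statement to pointwise linear algebra on each tangent space, followed by a short computation for the Lie algebra $\mathfrak{su}(2)$. Fix a point $m \in M$ and write $V = T_m M$. Restricting to unit quaternions identifies $\mathrm{SU}(2)$ with the unit sphere in $\mathbb{H}$, so its Lie algebra is $\mathfrak{su}(2) = \Imag\,\mathbb{H}$, spanned by $I,\,J,\,K$; the induced complex structures $S^2 = \{aI+bJ+cK \mid a^2+b^2+c^2=1\}$ are then precisely the unit vectors of $\mathfrak{su}(2)$. Both the $\mathrm{SU}(2)$-invariance of a form $\eta$ and the condition of being of type $(p,p)$ are pointwise conditions, so it suffices to argue on $\Lambda^*(V^*\otimes\mathbb{C})$. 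Moreover, since $\mathrm{SU}(2)$ is connected, $\eta$ is $\mathrm{SU}(2)$-invariant if and only if it is annihilated by the induced infinitesimal action $\rho(X)$ for every $X \in \mathfrak{su}(2)$, where $\rho(X)$ is the derivation of $\Lambda^*(V^*\otimes\mathbb{C})$ extending the contragredient action of $X$ on $V^*$.

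The key step is to describe, for a single induced complex structure $L \in S^2$, the kernel of $\rho(L)$ in terms of the Hodge decomposition $\Lambda^*(V^*\otimes\mathbb{C}) = \bigoplus_{p,q} \Lambda^{p,q}_L$ determined by $L$. Viewing $(V,L)$ as a complex vector space, $L$ is exactly multiplication by $i$, i.e. the central element $i\,\id$ of the unitary Lie algebra $\mathfrak{u}(V,L)$. The center of $\mathfrak{u}(V,L)$ acts on $\Lambda^{p,q}_L$ through a character of weight $q-p$, so $\rho(L)$ acts on $\Lambda^{p,q}_L$ by the scalar $i(q-p)$. Consequently $\Ker\rho(L) = \bigoplus_p \Lambda^{p,p}_L$, which says exactly that $\rho(L)\eta = 0$ if and only if $\eta$ is of Hodge type $(p,p)$ with respect to $L$. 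Carrying out this identification correctly — in particular tracking the contragredient action on $V^*$ and the resulting sign — is the one place that demands care; I expect it to be the main (and only genuine) obstacle, everything else being formal.

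With this in hand both implications are immediate. If $\eta$ is $\mathrm{SU}(2)$-invariant, then $\rho(X)\eta = 0$ for all $X \in \mathfrak{su}(2)$, in particular $\rho(L)\eta = 0$ for every $L \in S^2 \subset \mathfrak{su}(2)$, so by the computation above $\eta$ is of type $(p,p)$ with respect to every induced complex structure. Conversely, if $\eta$ is of type $(p,p)$ with respect to all induced complex structures, then $\rho(L)\eta = 0$ for every $L \in S^2$; since $S^2$ contains the basis $I,\,J,\,K$ of $\mathfrak{su}(2)$ and $\rho$ is linear, this forces $\rho(X)\eta = 0$ for all $X \in \mathfrak{su}(2)$, and hence $\eta$ is $\mathrm{SU}(2)$-invariant by connectedness of $\mathrm{SU}(2)$. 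Thus the passage from $S^2$ to all of $\mathfrak{su}(2)$ is just linearity, and the passage from the Lie algebra back to the group uses only that $\mathrm{SU}(2)$ is connected.
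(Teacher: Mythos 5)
Your proof is correct. The paper itself gives no argument here --- it simply cites Proposition 1.2 of Verbitsky's \emph{Hyperholomorphic bundles over a hyperk\"ahler manifold} --- and your pointwise reduction to the weight computation for the circle generated by $L$ (namely that $\rho(L)$ acts on $\Lambda^{p,q}_L$ by the scalar $\pm i(p-q)$, so its kernel is exactly $\bigoplus_p\Lambda^{p,p}_L$), combined with the observation that $S^2$ spans $\mathfrak{su}(2)$ and that ${\rm SU}(2)$ is connected, is essentially the standard argument given in that reference. The one point you flag as delicate, the sign coming from the contragredient action, indeed does not affect the conclusion, since only the vanishing of the weight matters.
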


\begin{proof}
This is proved in Proposition 1.2 of \cite{verbit1}.
\end{proof}

Using Lemma \ref{thm:invariant}, we can define vector bundles on $M$ which are simultaneously holomorphic with 
respect to all induced complex structures $I\,\in\, \mathbb{CP}^1$.

\begin{definition}\label{def:hyperholo}
Let $M$ be a hyperk\"ahler manifold and $B$ a $C^\infty$ vector bundle on $M$. A connection $\nabla$ on $B$ is 
called \emph{hyperholomorphic} if it preserves a Hermitian metric on $B$ and its curvature $\mathcal{K}(\nabla) 
\,\in\, H^0\left(M, \,\End(B) \bigotimes \Omega^2_M\right)$ is ${\rm SU}(2)$-invariant.
\end{definition}

By Lemma \ref{thm:invariant}, the ${\rm SU}(2)$-invariance
condition is equivalent to $\mathcal{K}(\nabla)$ being a $C^\infty$ section of
\[
\End(B) \otimes \left(\bigcap_{I \in \mathbb{CP}^1} {{}\Omega}^{1,1}_{M_I}\right) \,\subseteq\,
\End(B) \otimes {{}\Omega}^2_M.
\]
This means that for any $I\,\in \,\mathbb{CP}^1$, the $(0,\,1)$-part $\nabla^{0,1}_I$ of $\nabla$ with respect to $I$ 
induces a holomorphic structure on $B$ over $M_I$ \cite[p.~9, Proposition 3.7]{Ko}; we shall denote the 
corresponding holomorphic bundle by $E_I$. In this way, a hyperholomorphic connection $\nabla$ gives a family of 
holomorphic vector bundles $E_I$ over the K\"ahler manifolds $M_I$, $I\,\in\, \mathbb{CP}^1$, all with the same
underlying $C^\infty$ vector bundle $B$. To assemble these bundles into one object, we can use the twistor formalism.

Recall that the twistor space $\Tw(M)$ comes equipped with a (nonholomorphic) projection $\sigma \,: 
\,\Tw(M)\,\longrightarrow\, M$. Given a hyperholomorphic bundle $(B,\, \nabla)$ on $M$, consider the pullback 
bundle with connection $(\sigma^*\!B,\, \sigma^*\nabla)$ on $\Tw(M)$. By the considerations in the previous 
paragraph and the structure of $\Tw(M)$, the curvature of the connection $\sigma^*\nabla$ on $\Tw(M)$ is of type 
$(1,\,1)$, hence its $(0,\,1)$-part $\left(\sigma^*\nabla\right)^{0,1}$ defines a holomorphic structure on the 
topological bundle $\sigma^*\!B$ over $\Tw(M)$, which we shall denote by $E$. The restriction of $E$ to the fibre 
$\pi^{-1}(I) = M_I$ of the holomorphic projection $\pi \,:\, \Tw(M) \,\longrightarrow\, \mathbb{CP}^1$ is none 
other than the holomorphic bundle $E_I$ described in the previous paragraph. Here we will use the term 
``hyperholomorphic bundle'' interchangeably to refer either to a $C^\infty$ bundle with connection $(B,\, \nabla)$ 
on $M$ as in the statement of Definition \ref{def:hyperholo}, or to the holomorphic bundle $E_I$ on $M_I$ obtained 
from it as described in the previous paragraph, or to the holomorphic bundle $E$ on $\Tw(M)$ constructed above. In 
either of these contexts, the hyperholomorphic line bundles form a complex abelian Lie group under tensor product.

\begin{definition}\label{def:generic}
Let $M$ be hyperk\"ahler and $I$ an induced complex structure. We say that $I$ is \emph{generic} with respect to the hyperk\"ahler structure on $M$ if all elements in
\[
\bigoplus_p {{}H}^{p,p}(M_I) \cap {{}H}^{2p}(M,\,\mathbb{Z}) \subset {{}H}^*(M,\,\mathbb{C})
\]
are ${\rm SU}(2)$-invariant.
\end{definition}

This terminology is justified: most induced complex structures are generic, in a sense made precise in the following proposition.

\begin{proposition}\label{thm:dense}
Let $M$ be a hyperk\"ahler manifold. The subset $S_0 \,\subset\, {{}S}^2$ of generic induced complex structures is
dense in ${{}S}^2$ and its complement ${{}S}^2\setminus S_0$ is countable.
\end{proposition}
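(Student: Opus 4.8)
The plan is to translate non-genericity, via Lemma \ref{thm:invariant}, into the degeneration of the Hodge type of a fixed cohomology class as the complex structure moves along the twistor line, and then to use that this degeneration is governed by a \emph{holomorphic} condition in the twistor parameter. Concretely, by Lemma \ref{thm:invariant} an induced complex structure $I$ is non-generic if and only if there is an integral class $\alpha \in H^{2p}(M,\mathbb{Z})$ (for some $p$) that is of type $(p,p)$ with respect to $I$ yet is not ${\rm SU}(2)$-invariant, i.e. fails to be of type $(p,p)$ for at least one induced complex structure. Setting $Z_\alpha := \{I \in {{}S}^2 : \alpha \in H^{p,p}(M_I)\}$, I would write $S^2 \setminus S_0 = \bigcup_\alpha Z_\alpha$, the union taken over all non-invariant integral classes $\alpha$. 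As each group $H^{2p}(M,\mathbb{Z})$ is finitely generated and only finitely many degrees occur, this index set is countable, so the whole statement reduces to proving that each $Z_\alpha$ (with $\alpha$ non-invariant) is \emph{finite}.

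To get finiteness I would exploit holomorphicity in the twistor parameter. Identifying ${{}S}^2$ with $\mathbb{CP}^1$ through $\pi$, the fibres $M_t$ assemble into the proper holomorphic submersion $\pi\,:\,\Tw(M)\,\longrightarrow\,\mathbb{CP}^1$ with compact K\"ahler fibres. Since $\mathbb{CP}^1$ is simply connected, the local system $R^{2p}\pi_*\mathbb{C}$ is canonically the trivial bundle $V := H^{2p}(M,\mathbb{C}) \otimes \mathcal{O}_{\mathbb{CP}^1}$, and by the holomorphicity of the Hodge filtration for smooth proper K\"ahler families (Griffiths) the terms $F^p_t \subset H^{2p}(M,\mathbb{C})$ form a holomorphic subbundle $F^p \subset V$. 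For a fixed \emph{real} class $\alpha$ one has $\alpha \in H^{p,p}(M_t) = F^p_t \cap \overline{F^p_t}$ if and only if $\alpha \in F^p_t$, because $\alpha \in \overline{F^p_t}$ is equivalent to $\overline{\alpha} = \alpha \in F^p_t$. Thus $Z_\alpha$ is exactly the zero locus of the holomorphic section of the quotient bundle $V/F^p$ induced by the constant section $\alpha$, and in particular $Z_\alpha$ is a complex-analytic subset of $\mathbb{CP}^1$.

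With this in hand the conclusion is immediate. If $\alpha$ is not ${\rm SU}(2)$-invariant, Lemma \ref{thm:invariant} furnishes a $t_0$ with $\alpha \notin H^{p,p}(M_{t_0})$, so the section above is not identically zero and $Z_\alpha \neq \mathbb{CP}^1$; a proper complex-analytic subset of the compact curve $\mathbb{CP}^1$ is discrete, hence finite. Therefore $S^2 \setminus S_0 = \bigcup_\alpha Z_\alpha$ is a countable union of finite sets, hence countable. Density of $S_0$ then follows at once: every nonempty open subset of ${{}S}^2$ is uncountable and so cannot be contained in the countable set $S^2 \setminus S_0$, whence $S_0$ meets every open set.

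I expect the sole real obstacle to be securing the \emph{complex}-analyticity of $Z_\alpha$ — that is, the holomorphic dependence of the Hodge data on the twistor parameter — since mere real-analyticity on the surface ${{}S}^2$ would only yield density (via Baire category) and not the countability of the complement, a proper real-analytic subset of ${{}S}^2$ being possibly a whole curve. It is reassuring that in degree $2$ this holomorphicity is entirely explicit: writing $\Omega = \omega_J + i\omega_K$ and $\omega = \omega_I$, the holomorphic $(2,0)$-line over the twistor line is spanned (in suitable conventions) by the quadratic section $\sigma_t = \Omega + 2t\,\omega - t^2\bar{\Omega}$, so that, for the Beauville--Bogomolov form $q$ on $H^2(M,\mathbb{R})$, the condition $\alpha \in H^{1,1}(M_t)$ reads $q(\alpha,\sigma_t) = q(\alpha,\Omega) + 2t\,q(\alpha,\omega) - t^2 q(\alpha,\bar{\Omega}) = 0$, a polynomial of degree at most $2$ in $t$ whose coefficients $q(\alpha,\Omega)$, $q(\alpha,\omega)$, $q(\alpha,\bar\Omega)$ vanish simultaneously precisely when $q(\alpha,\omega_I) = q(\alpha,\omega_J) = q(\alpha,\omega_K) = 0$, i.e. precisely when $\alpha$ is ${\rm SU}(2)$-invariant. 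For general $p$ one must replace this explicit conic by the general holomorphicity of the Hodge bundles $F^p$ over the K\"ahler family $\pi$, which is the one technical input to pin down carefully.
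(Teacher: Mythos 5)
Your argument is correct, but note that the paper does not actually prove this statement: it simply cites Proposition~2.2 of \cite{verbit2}, so what you have produced is a self-contained proof where the authors offer only a reference. Your route --- reduce to a countable union of loci $Z_\alpha$ over non-invariant integral classes, then kill each $Z_\alpha$ by exhibiting it as the zero set of a holomorphic section of $\left(R^{2p}\pi_*\mathbb{C}\otimes\mathcal{O}\right)/F^p$ over the twistor line --- is sound, and you correctly isolate the real content: the condition ``$\alpha$ is of type $(p,p)$'' must be \emph{holomorphic} in the twistor parameter, since mere real-analyticity on $S^2$ would allow a one-dimensional vanishing locus and give only density, not countability. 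Verbitsky's original argument reaches the same finiteness through the ${\rm SU}(2)$-representation theory on $H^*(M,\mathbb{C})$ (a non-invariant class has a nonzero component in some irreducible summand of positive weight, and the set of $I\in S^2$ for which that component lies in the zero-weight space of the Cartan element $H_I$ is finite), which avoids invoking variations of Hodge structure; your version instead leans on the standard fact that $F^p$ is a holomorphic subbundle for a smooth proper family with compact K\"ahler fibres. That fact does need the small caveat you flag: $\Tw(M)$ itself is only balanced, so one should say explicitly that local constancy of the Hodge numbers and degeneration of Hodge--de Rham are fibrewise statements requiring only that each $M_I$ be compact K\"ahler, which they are. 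Two minor points are worth making explicit in a written-up version: (i) since the ${\rm SU}(2)$-action preserves cohomological degree, a class in $\bigoplus_p H^{p,p}(M_I)\cap H^{2p}(M,\mathbb{Z})$ is invariant iff each graded piece is, which justifies working one degree at a time; and (ii) integral classes are treated via their images in real cohomology, so torsion is irrelevant. With those spelled out, your proof is complete, and your explicit degree-$2$ computation with $\sigma_t=\Omega+2t\,\omega-t^2\bar{\Omega}$ is exactly the right consistency check.
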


\begin{proof}
This is proved in Proposition 2.2 of \cite{verbit2}.
\end{proof}

We now give the definition of stable vector bundles and torsionfree sheaves. Recall that a coherent sheaf 
$\mathcal{F}$ on an arbitrary complex manifold is called \emph{torsionfree} if the natural morphism into the double 
dual $\mathcal{F} \,\longrightarrow\, \mathcal{F}^{**}$ is injective. If it is an isomorphism, $\mathcal{F}$ is 
called \emph{reflexive}. We call the sheaf $\mathcal{F}$ \emph{normal} if for every open set $U$ and every analytic 
subset $A \subset U$ of complex codimension at least two, the restriction map $\mathcal{F}(U) \,\longrightarrow\, 
\mathcal{F}(U \setminus A)$ is an isomorphism.

\begin{definition}
Let $Z$ be a compact balanced manifold of complex dimension $n$, and let $\omega$ denote the Hermitian
form of its balanced metric. The \emph{degree} of a coherent sheaf $\mathcal{F}$ on $Z$ is defined to be
\[
\deg \mathcal{F} \,:=\, \int_Z c_1(\mathcal{F}) \wedge \omega^{n-1}\, ,
\]
where by $c_1(\mathcal{F})$ we mean any representative of the first Chern class of $\mathcal{F}$ in the
de Rham cohomology ${{}H}^2(Z,\, \mathbb{C})$ (the condition that $\omega$ is balanced ensures
that $\deg \mathcal{F}$ does not depend on the choice of the representative of the first Chern class).
If $\mathcal{F}$ is a nonzero torsionfree coherent sheaf, the \emph{slope} of $\mathcal{F}$ is
\[
\mu(\mathcal{F}) \,:=\, \frac{\deg \mathcal{F}}{\rk \mathcal{F}}\,.
\]
A torsionfree sheaf $\mathcal{F}$ is called \emph{stable} (respectively, \emph{semistable}) if for every subsheaf $\mathcal{G} \subset \mathcal{F}$ with $0 < \rk \mathcal{G} < \rk \mathcal{F}$ we have
\[
\mu(\mathcal{G}) \,< \, \mu(\mathcal{F}) \ \ \left(\textrm{respectively, } \mu(\mathcal{G}) \,\le\,
\mu(\mathcal{F})\right)\, ,
\]
while $\mathcal{F}$ is called \emph{polystable} if it is a direct sum of stable sheaves of the same slope.

A torsionfree sheaf $\mathcal{F}$ is called \emph{irreducible} if it has no proper subsheaves of lower rank.
\end{definition}

Note that any irreducible sheaf is stable.

For a hyperk\"ahler $M$, as mentioned previously, its twistor space $\Tw(M)$ is a balanced manifold, and the fibres $\pi^{-1}(I) = M_I$ of the twistor projection $\pi \,:\, \Tw(M) \,\longrightarrow\,
 \mathbb{CP}^1$ are K\"ahler. Moreover, if we denote by $\omega$ the Hermitian form of the balanced metric on $\Tw(M)$, its restriction $\omega_I$ to $M_I$ is precisely the K\"ahler form of the K\"ahler metric on $M_I$. Thus, given a holomorphic vector bundle $E$ on $\Tw(M)$, it makes sense to talk both about its stability as a bundle on $\Tw(M)$, and also the stability of its restrictions $E_I$ to the fibres $M_I$ of $\pi$.

\begin{definition}
A holomorphic vector bundle $E$ on the twistor space $\Tw(M)$ is called \emph{fibrewise stable} if its restriction 
$E_I$ to each fibre $M_I$ of the projection $\pi \,:\, \Tw(M) \,\longrightarrow\, \mathbb{CP}^1$ is stable. $E$ is 
called \emph{generically fibrewise stable} if $E_I$ is stable for all $I$ in a nonempty Zariski open subset of 
$\mathbb{CP}^1$. Similarly, $E$ is called \emph{fibrewise simple} if all the restrictions $E_I$ are simple bundles, 
in the sense that $\Hom_{M_I}(E_I,\, E_I) \,=\, \mathbb{C}$, and it is called \emph{generically fibrewise simple} if
$E_I$ is simple for all $I$ in a nonempty Zariski open subset of $\mathbb{CP}^1$.
\end{definition}

The following important result gives a topological characterization of bundles on $M$ admitting hyperholomorphic 
structures.

\begin{theorem}\label{thm:c1c2}
Let $M$ be a hyperk\"ahler manifold, and let $E_I$ be a stable holomorphic bundle on $M_I$, for some $I \,\in\, 
\mathbb{CP}^1$. If $c_1(E_I)$ and $c_2(E_I)$ are ${\rm SU}(2)$-invariant, then there exists a unique hyperholomorphic 
connection on the underlying $C^\infty$ bundle of $E_I$ which induces the holomorphic structure of $E_I$. 
\end{theorem}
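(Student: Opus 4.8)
The plan is to combine the Kobayashi--Hitchin correspondence on the Kähler fibre $M_I$ with the special structure of the ${\rm SU}(2)$-action on the $2$-forms of $M$. First I would apply the Uhlenbeck--Yau theorem: as $E_I$ is stable on the compact Kähler manifold $M_I$, it carries a Hermitian--Einstein metric $h$, unique up to a positive scalar, whose Chern connection $\nabla$ satisfies $\Lambda_I\mathcal{K}(\nabla)=\lambda\cdot\id$ for a constant $\lambda$ proportional to $\deg E_I$. Since $c_1(E_I)$ is ${\rm SU}(2)$-invariant, Lemma \ref{thm:invariant} shows that its harmonic representative is of type $(1,1)$ with respect to every induced complex structure, hence primitive with respect to $\omega_I$; therefore $\deg E_I=\int_M c_1(E_I)\wedge\omega_I^{n-1}=0$, so $\lambda=0$ and $\mathcal{K}:=\mathcal{K}(\nabla)$ is a primitive $(1,1)$-form with values in $\End(E_I)$. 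It then remains only to prove that $\mathcal{K}$ is ${\rm SU}(2)$-invariant, for then $(E_I,h,\nabla)$ is, by definition, a hyperholomorphic bundle inducing the given holomorphic structure.

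For the central step I would use that on a hyperkähler manifold the complexified $2$-forms carry only the trivial (spin-$0$) and the adjoint (spin-$1$) representations of ${\rm SU}(2)$, the spin-$1$ summand being spanned pointwise by the Kähler forms of the induced complex structures. Decompose accordingly $\mathcal{K}=\mathcal{K}_0+\mathcal{K}_1$ into its invariant and spin-$1$ parts. Because $\mathcal{K}$ is primitive of type $(1,1)$ with respect to $I$, its spin-$1$ part $\mathcal{K}_1$ lies in the weight-zero part of the spin-$1$ summand for the circle generated by $I$, namely the part transforming like $\omega_I$; since $\omega_I$ is of type $(2,0)+(0,2)$ with respect to any induced complex structure $J$ orthogonal to $I$, the form $\mathcal{K}_1$ is, relative to such a $J$, primitive of type $(2,0)+(0,2)$, whereas $\mathcal{K}_0$, being invariant, stays primitive of type $(1,1)$ for every $J$. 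I would then study the characteristic number
\[
\kappa_J\,:=\,\int_M \Tr(\mathcal{K}\wedge\mathcal{K})\wedge\frac{\omega_J^{\,n-2}}{(n-2)!}\,.
\]
By Chern--Weil, $\Tr(\mathcal{K}\wedge\mathcal{K})$ represents a fixed multiple of $c_1(E_I)^2-2c_2(E_I)$; since a product of ${\rm SU}(2)$-invariant classes is again of type $(p,p)$ for every $I$ and hence invariant by Lemma \ref{thm:invariant}, this class is ${\rm SU}(2)$-invariant, and the Fujiki relations for invariant classes in the hyperkähler cohomology ring make $\kappa_J$ independent of $J\in S^2$.

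To finish the existence part I would evaluate $\kappa_J$ twice by means of Weil's identity, which for a primitive $2$-form $\alpha$ reads $\star\alpha=-\tfrac{1}{(n-2)!}\omega^{n-2}\wedge\alpha$ when $\alpha$ is of type $(1,1)$ but $\star\alpha=+\tfrac{1}{(n-2)!}\omega^{n-2}\wedge\alpha$ when $\alpha$ is of type $(2,0)+(0,2)$. At $J=I$ the whole of $\mathcal{K}$ is primitive $(1,1)$, so $\kappa_I=\|\mathcal{K}_0\|_{L^2}^2+\|\mathcal{K}_1\|_{L^2}^2$. At a $J$ orthogonal to $I$ the mixed term $\Tr(\mathcal{K}_0\wedge\mathcal{K}_1)\wedge\omega_J^{n-2}$ vanishes for type reasons and the opposite Weil signs give $\kappa_J=\|\mathcal{K}_0\|_{L^2}^2-\|\mathcal{K}_1\|_{L^2}^2$. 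As $\kappa_J$ is independent of $J$, comparing the two expressions forces $\|\mathcal{K}_1\|_{L^2}^2=0$, whence $\mathcal{K}=\mathcal{K}_0$ is ${\rm SU}(2)$-invariant. For uniqueness, let $\nabla'$ be any hyperholomorphic connection inducing the holomorphic structure of $E_I$; its curvature is ${\rm SU}(2)$-invariant, hence primitive of type $(1,1)$ with respect to $I$, so $\Lambda_I\mathcal{K}(\nabla')=0$ and $\nabla'$ is the Chern connection of a Hermitian--Einstein metric inducing $\bar\partial_{E_I}$. By the uniqueness in the Kobayashi--Hitchin correspondence this metric is unique up to a scalar, and rescaling does not change the Chern connection, so $\nabla'=\nabla$.

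I expect the main obstacle to lie in the two geometric inputs to the comparison of $\kappa_I$ and $\kappa_J$: first, the genuine independence of $\kappa_J$ on $J$, which rests on the ${\rm SU}(2)$-invariance of $c_1^2$ and $c_2$ propagating through the Fujiki relations of the hyperkähler cohomology ring rather than through any (unavailable) multiplicativity of the Lefschetz-type ${\rm SU}(2)$-action; and second, the precise identification of the primitive Hodge types of $\mathcal{K}_0$ and $\mathcal{K}_1$ with respect to the orthogonal complex structures, which is exactly what makes the two opposite Weil signs applicable. The sign discrepancy in Weil's identity is the actual mechanism that forces the non-invariant part $\mathcal{K}_1$ to vanish.
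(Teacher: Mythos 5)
The paper offers no proof of its own here --- the statement is quoted verbatim from Theorem 2.5 of \cite{verbit1} --- and your argument is essentially a correct reconstruction of Verbitsky's original proof: Uhlenbeck--Yau gives a Hermitian--Einstein connection whose Einstein constant vanishes because the invariant class $c_1$ has degree zero, and the opposite signs in the Weil identity for primitive $(1,1)$- versus $(2,0)+(0,2)$-forms, fed into the $J$-independent characteristic number $\int_M\Tr(\mathcal{K}\wedge\mathcal{K})\wedge\omega_J^{n-2}$, force the non-invariant part of the curvature to vanish; the uniqueness argument via the uniqueness of the Hermitian--Einstein metric is also the standard one. Two small points should be repaired. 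First, the spin-$1$ summand of $\Lambda^2T^*M\otimes\mathbb{C}$ is spanned pointwise by $\omega_I,\omega_J,\omega_K$ only when $\dim_{\mathbb{R}}M=4$; in general $T^*M\otimes\mathbb{C}\cong V_{1/2}\otimes W$ for a $2d$-dimensional multiplicity space $W$ with trivial ${\rm SU}(2)$-action, so the spin-$1$ summand is $V_1\otimes\Lambda^2W$. This does not damage your argument, since all you actually use is the weight decomposition of the $V_1$-factor under the circles generated by $I$ and by $J$, which is unaffected by tensoring with $W$. Second, the $J$-independence of $\kappa_J$ neither needs nor should invoke the Fujiki relations, which would restrict the statement to irreducible holomorphic symplectic $M$: since ${\rm SU}(2)$ acts on forms by pointwise isometries preserving the volume form and commuting with $\star$, one has $\int_M\eta\wedge\omega_{gJg^{-1}}^{n-2}=\int_M(g^{-1}\cdot\eta)\wedge\omega_J^{n-2}$ for all $g\in{\rm SU}(2)$, and transitivity of the action on the sphere of induced complex structures gives the claim for any invariant class $\eta$; this is precisely the mechanism behind Lemma \ref{thm:degr}.
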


\begin{proof}
This is proved in Theorem 2.5 of \cite{verbit1}.
\end{proof}

Any hyperholomorphic bundle on any $M_I$ has degree zero, as shown by the following lemma.

\begin{lemma}\label{thm:degr}
An ${\rm SU}(2)$-invariant 2-form $\beta$ on a hyperk\"ahler manifold $M$ satisfies
\[
\int_M \beta \wedge \omega_I^{n-1} = 0
\]
for any induced complex structure $I$, where $\omega_I$ denotes the K\"ahler form on $M_I$.
\end{lemma}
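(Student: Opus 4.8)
The plan is to exploit two features of the situation: that the Kähler forms depend \emph{linearly} on the induced complex structure, and that the $\mathrm{SU}(2)$-action on $\Omega^*_M$ is \emph{fibrewise} (it covers the identity of $M$) and orientation preserving. Writing an induced complex structure as $A = aI + bJ + cK$ with $(a,b,c)\in S^2$, the identity $\omega_A(X,Y) = g(AX,Y)$ gives $\omega_A = a\omega_I + b\omega_J + c\omega_K$, so $\omega_A$ is linear in $A$. I would then study the function $F$ on $S^2$ defined by $F(A) = \int_M \beta \wedge \omega_A^{\,n-1}$ (where $n-1 = \dim_{\mathbb C}M - 1$ is forced by $\beta\wedge\omega_A^{\,n-1}$ being a top-degree form on $M$) and show that $F$ vanishes identically; the lemma is the special case $A = I$.

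First I would pin down the equivariance of the Kähler forms under the quaternionic action. For a unit quaternion $q\in\mathrm{SU}(2)$, let $\phi_q$ denote left multiplication by $q$ on the fibres of $TM$. Since the hyperkähler metric satisfies $g(qX,qY)=g(X,Y)$, a short computation yields $\phi_q^*\omega_A = \omega_{RA}$, where $R\in\mathrm{SO}(3)$ is the rotation of $\mathrm{Im}\,\mathbb{H}$ associated to $q$ by conjugation under the double cover $\mathrm{SU}(2)\to\mathrm{SO}(3)$; as $q$ varies, $R$ sweeps out all of $\mathrm{SO}(3)$. The crucial remark is that $\phi_q$ is a fibrewise orientation-preserving isometry, hence acts as the identity on top-degree forms (multiplication by $\det_{\mathbb R}\phi_q = 1$). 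Evaluating $\phi_q^*$ on the top-degree form $\beta\wedge\omega_A^{\,n-1}$ in two ways---trivially on one hand, and as $\phi_q^*\beta \wedge (\phi_q^*\omega_A)^{n-1} = \beta\wedge\omega_{RA}^{\,n-1}$ on the other, using the $\mathrm{SU}(2)$-invariance $\phi_q^*\beta=\beta$---produces the \emph{pointwise} identity $\beta\wedge\omega_A^{\,n-1} = \beta\wedge\omega_{RA}^{\,n-1}$. Integrating, $F$ is $\mathrm{SO}(3)$-invariant.

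To finish I would run a parity argument. Because $\mathrm{SO}(3)$ acts transitively on $S^2$, the antipode $-A$ lies in the orbit of $A$, so $F(-A)=F(A)$. On the other hand $\omega_{-A} = -\omega_A$ gives $F(-A) = (-1)^{n-1}F(A)$. Since $M$ is hyperkähler, $\dim_{\mathbb R}M$ is divisible by $4$, so $\dim_{\mathbb C}M$ is even and the exponent $n-1$ is \emph{odd}; thus $(-1)^{n-1} = -1$, forcing $F(A) = -F(A)$ and hence $F\equiv 0$. The only delicate point is the equivariance computation $\phi_q^*\omega_A = \omega_{RA}$ together with the reduction to top-degree forms: once one realizes that the fibrewise, determinant-one action makes the pointwise identity automatic, the rest is the elementary observation that $\dim_{\mathbb C}M$ is even. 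I expect the sign bookkeeping in $\phi_q^*\omega_A = \omega_{RA}$ to be the only place requiring care, but it does not affect the conclusion, since all that is used is that $R$ ranges over $\mathrm{SO}(3)$.
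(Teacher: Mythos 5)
Your argument is correct and complete, and it is genuinely different from what the paper does: the paper gives no proof at all, deferring to Lemma 2.1 of Verbitsky's paper on hyperholomorphic bundles, where the vanishing comes out of the representation theory of the $\mathrm{SU}(2)$-action on $H^2(M,\mathbb{R})$ (the degree functional pairs nontrivially only against the three-dimensional weight space spanned by $\omega_I,\omega_J,\omega_K$, which is complementary to the invariant part). Your route is more elementary and entirely pointwise. Each step checks out: $\omega_A=a\omega_I+b\omega_J+c\omega_K$ follows from $\omega_A(X,Y)=g(AX,Y)$; the metric satisfies $g(qX,qY)=g(X,Y)$ because $I,J,K$ are $g$-skew, so the $g$-adjoint of $q$ is $\bar q$ and hence $\phi_q^*\omega_A=\omega_{\bar qAq}$, with conjugation realizing all of $\mathrm{SO}(3)$ on the $2$-sphere of induced complex structures; and $\det_{\mathbb{R}}(L_q)=|q|^{4d}=1$ (or simply: $\mathrm{SU}(2)$ is connected and the determinant is $\pm1$), so $\phi_q^*$ fixes top-degree forms. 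Combined with $\phi_q^*\beta=\beta$ and the oddness of $n-1=\dim_{\mathbb{C}}M-1$, the specialization $RA=-A$ gives $\beta\wedge\omega_A^{n-1}=-\beta\wedge\omega_A^{n-1}$ \emph{before} integrating, so you in fact prove the stronger pointwise vanishing $\beta\wedge\omega_I^{n-1}=0$. This is consistent with the flat local model: on $\mathbb{H}=\mathbb{R}^4$ the $\mathrm{SU}(2)$-invariant $2$-forms are exactly the anti-self-dual ones, which wedge to zero against the self-dual form $\omega_I$. What Verbitsky's cohomological argument buys in exchange is a formula for the degree of an arbitrary (not necessarily invariant) class in terms of its $\mathrm{SU}(2)$-decomposition, which is more information than the lemma asks for; your symmetry argument isolates exactly the vanishing statement needed here.
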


\begin{proof}
This is a consequence of Lemma 2.1 of \cite{verbit1}.
\end{proof}

It follows from Lemma \ref{thm:degr} that any sheaf $\mathcal{F}$ on any $M_I$ with
${\rm SU}(2)$-invariant first 
Chern class $c_1(\mathcal{F}) \,\in\, {{}H}^2(M,\, \mathbb{R})$ has degree zero, because the harmonic representative of 
$c_1(\mathcal{F})$ must be ${\rm SU}(2)$-invariant as a two-form. In particular, if $S_0\,\subseteq\, 
{{}S}^2\,\cong\, \mathbb{CP}^1$ denotes the subset of generic complex structures of $M$ as in the statement of Proposition 
\ref{thm:dense}, then for any $I \,\in\, S_0$, all sheaves on $M_I$ have degree zero, and are thus semistable.
The following proposition is a consequence of this.

\begin{proposition}\label{thm:divisors}
The holomorphic twistor projection $\pi \,:\, \Tw(M)\,\longrightarrow\,
\mathbb{CP}^1$ establishes a bijective correspondence between divisors on $\mathbb{CP}^1$ and those on $\Tw(M)$.
\end{proposition}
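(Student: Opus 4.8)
The plan is to realize the asserted correspondence concretely as the pullback homomorphism $\pi^*\colon \mathrm{Div}(\mathbb{CP}^1)\,\longrightarrow\,\mathrm{Div}(\Tw(M))$, which sends a point $p\in\mathbb{CP}^1$ to the fibre $\pi^{-1}(p)=M_I$ (with $I$ the induced complex structure corresponding to $p$) and extends $\mathbb{Z}$-linearly, and to prove that it is an isomorphism of divisor groups. Since $\pi$ is a proper holomorphic submersion, each fibre $\pi^{-1}(p)=M_I$ is a smooth, connected, reduced hypersurface, hence a prime divisor, and distinct points give distinct fibres; so $\pi^*$ carries the free generators of $\mathrm{Div}(\mathbb{CP}^1)$ injectively onto distinct prime divisors of $\Tw(M)$. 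Everything therefore reduces to \emph{surjectivity}: I must show that every prime divisor $X\subset\Tw(M)$ is of the form $\pi^{-1}(p)$.

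So let $X\subset\Tw(M)$ be an irreducible reduced hypersurface. As $\Tw(M)$ is compact, $\pi|_X\colon X\to\mathbb{CP}^1$ is proper, and by the proper mapping theorem its image $\pi(X)$ is an irreducible analytic subset of the curve $\mathbb{CP}^1$, hence either a single point or all of $\mathbb{CP}^1$. If $\pi(X)=\{p\}$, then $X\subseteq\pi^{-1}(p)=M_I$; but $\dim_{\mathbb C}X=\dim_{\mathbb C}\Tw(M)-1=\dim_{\mathbb C}M_I$, so the inclusion of the irreducible $X$ into the irreducible $M_I$ of equal dimension forces $X=M_I=\pi^{-1}(p)$, as wanted. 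It remains to rule out the dominant case $\pi(X)=\mathbb{CP}^1$.

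Suppose then that $\pi|_X$ is dominant, hence surjective. For all $I$ outside a finite subset of $\mathbb{CP}^1$ the fibre $X\cap M_I$ of $\pi|_X$ is nonempty of dimension $\dim_{\mathbb C}X-1=\dim_{\mathbb C}M_I-1$, so it is a nonzero effective divisor on $M_I$. Intersecting this cofinite set with the dense set $S_0$ of generic complex structures, whose complement is only countable by Proposition~\ref{thm:dense}, I may pick a generic $I\in S_0$ for which $D:=X\cap M_I$ is a nonzero effective divisor. Its class $[D]=c_1(\mathcal{O}_{M_I}(D))$ lies in $H^{1,1}(M_I)\cap H^2(M,\mathbb{Z})$, and by genericity of $I$ (Definition~\ref{def:generic}, Lemma~\ref{thm:invariant}) this class is $\mathrm{SU}(2)$-invariant. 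Lemma~\ref{thm:degr} then forces its degree on $M_I$ to vanish, $\deg_I D=\int_{M_I}[D]\wedge\omega_I^{\dim_{\mathbb C}M_I-1}=0$. Representing $[D]$ by the current of integration along $D$, however, this same integral equals the Riemannian volume $\int_D\omega_I^{\dim_{\mathbb C}D}$ of the nonempty complex subvariety $D$, which is strictly positive by Wirtinger's inequality. This contradiction shows $\pi|_X$ cannot be dominant.

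Hence every prime divisor of $\Tw(M)$ is a fibre of $\pi$, so $\pi^*$ is surjective and therefore an isomorphism, giving the claimed bijection. The step I expect to require the most care is the simultaneous genericity argument in the last paragraph: one must arrange $I$ to lie in $S_0$ \emph{and} to give a proper, nonzero intersection $X\cap M_I$, which is exactly why the density of $S_0$ together with the countability of its complement (Proposition~\ref{thm:dense}) is indispensable. Once such an $I$ is secured, the clash between $\mathrm{SU}(2)$-invariance forcing degree zero and Wirtinger positivity forcing strictly positive degree is immediate.
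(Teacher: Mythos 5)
Your proof is correct. The paper itself offers no argument for this proposition --- it simply cites Proposition 2.17 of \cite{tomberg3} --- and what you have written is precisely the standard argument behind that result, assembled from the paper's own preliminaries: a prime divisor with point image equals a fibre by irreducibility and equality of dimensions, while a dominant prime divisor would cut out a nonzero effective divisor $D$ on some generic fibre $M_I$, $I \in S_0$ (available by Proposition \ref{thm:dense}), whose integral $(1,1)$-class is ${\rm SU}(2)$-invariant (Definition \ref{def:generic}, Lemma \ref{thm:invariant}) and hence of degree zero by Lemma \ref{thm:degr} --- applied to the harmonic representative, exactly as in the paper's discussion following that lemma --- contradicting the strict positivity of $\int_D \omega_I^{\dim_{\mathbb{C}} D}$ given by Wirtinger. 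The one step you leave implicit, passing from ${\rm SU}(2)$-invariance of the cohomology class to an invariant $2$-form to which Lemma \ref{thm:degr} applies, is supplied verbatim by the paper right after that lemma, so there is no gap.
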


\begin{proof}
This is proved in Proposition 2.17 of \cite{tomberg3}.
\end{proof}

In view of this bijective correspondence, given a divisor $D$ on $\mathbb{CP}^1$, we will denote
by the same letter $D$ the corresponding divisor on $\Tw(M)$, and vice versa. The corresponding
line bundle on $\mathbb{CP}^1$ will be denoted by $\mathcal{O}_{\mathbb{CP}^1}(D)$, and on
$\Tw(M)$ by $\mathcal{O}_{\Tw(M)}(D)$. For a sheaf $\mathcal{F}$ on $\Tw(M)$, we define
\[
\mathcal{F}(D) \,:=\, \mathcal{F} \bigotimes\nolimits_{\mathcal{O}_{\Tw(M)}} \mathcal{O}_{\Tw(M)}(D)\, .
\] 

We finish this section by stating a theorem from the paper \cite{tomberg3} on fibrewise 
stability of bundles on the twistor space of a simply connected hyperk\"ahler manifold $M$, 
which establishes a partial converse to the result of Kaledin and Verbitsky
proved in \cite{kaled-verbit} that was mentioned in the introduction.

\begin{theorem}\label{thm:result}
Let $M$ be a compact simply connected hyperk\"ahler manifold, and let $E$ be a holomorphic
vector bundle on the twistor space $\Tw(M)$. If $E$ is generically fibrewise stable, then it is
irreducible. If $E$ is irreducible of rank two or three, then $E$ is
generically fibrewise stable. Also, if $E$ is irreducible and it is generically fibrewise simple,
then $E$ is generically fibrewise stable.
\end{theorem}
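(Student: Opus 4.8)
The plan is to run everything through one basic dictionary on the generic fibres. By Definition \ref{def:generic} together with Lemma \ref{thm:degr}, for every $I \in S_0$ every coherent sheaf on $M_I$ has degree zero, hence slope zero; consequently each such sheaf is automatically semistable, and $E_I$ is stable if and only if it admits no proper subsheaf of intermediate rank, i.e. if and only if $E_I$ is irreducible. Since by Proposition \ref{thm:dense} the set $S_0$ is co-countable while every nonempty Zariski open subset of $\mathbb{CP}^1$ is cofinite, the two notions of ``generic'' always meet: any Zariski open $U$ contains points of $S_0$. Thus on the relevant fibres ``stable'' and ``irreducible'' coincide, and the whole statement becomes a comparison between irreducibility of $E$ on $\Tw(M)$ and irreducibility of its generic restrictions $E_I$.

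\emph{First statement.} Suppose $E$ is generically fibrewise stable and, for contradiction, that $E$ carries a proper subsheaf $\mathcal{F}$ with $0 < \rk \mathcal{F} < \rk E$. Restricting to $M_I$ for $I$ in $U \cap S_0$ (with $U$ the Zariski open set of stability), the generic rank is preserved, so the image of $\mathcal{F}|_{M_I}$ in $E_I$ is a proper subsheaf of intermediate rank. This contradicts the irreducibility of the stable sheaf $E_I$, recovering the Kaledin--Verbitsky implication.

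\emph{Converse direction (rank two or three, or generic simplicity).} I argue by contrapositive: assuming $E$ is not generically fibrewise stable, I will manufacture a global proper subsheaf $\mathcal{F} \subsetneq E$ of intermediate rank, contradicting the irreducibility of $E$. By the dictionary above, failure of generic fibrewise stability means the set of $I \in S_0$ for which $E_I$ is reducible is infinite; by pigeonhole a single intermediate rank $r$ with $0 < r < \rk E$ is realized by destabilizing subsheaves of $E_I$ for infinitely many such $I$. Let $\Dou_r$ denote the relative Douady space over $\mathbb{CP}^1$ parametrizing rank-$r$, degree-zero subsheaves of the fibres of $E$; it is proper over $\mathbb{CP}^1$. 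Its image in $\mathbb{CP}^1$ is closed and infinite, hence all of $\mathbb{CP}^1$, so some component of $\Dou_r$ dominates $\mathbb{CP}^1$. If this family can be made single-valued over a dense open subset, properness lets it extend to a genuine section over all of $\mathbb{CP}^1$, producing the desired global subsheaf.

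\emph{Main obstacle: canonicity of the fibrewise subsheaf.} The crux is to replace the dominating component (a priori a multisection) by an honest section, i.e. to pin down a canonical destabilizing subsheaf on each generic reducible fibre. Under the hypothesis of generic fibrewise simplicity (third statement) this is clean: the socle, the unique maximal polystable subsheaf of slope zero, is canonical, and a simple reducible $E_I$ cannot be polystable, so its socle is a proper nonzero subsheaf; the relative socle has constant rank over a dense open and furnishes the required section. In the low-rank cases (second statement) there is no simplicity to invoke, and the difficulty concentrates in the possibility that the generic reducible fibre is polystable (for instance $E_I \cong L_{1} \oplus L_{2}$ in rank two) with monodromy on $\mathbb{CP}^1$ permuting the summands, so that no individual summand descends to a section. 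Here I would analyze the relative endomorphism algebra $\pi_* \mathcal{E}nd(E)$: non-simplicity of the generic fibre makes it a nontrivial commutative $\mathcal{O}_{\mathbb{CP}^1}$-algebra, which either yields a global idempotent, hence a global proper direct summand of $E$, or exhibits $E$ through a finite cover of $\mathbb{CP}^1$ carrying a lower-rank sheaf; the constraint $\rk E \in \{2,3\}$ limits the possible factorizations so that each alternative produces a proper subsheaf of intermediate rank, again contradicting irreducibility. The technical heart is therefore the flatness (constancy of rank) of the relative socle, respectively the relative minimal destabilizer, over a dense open subset of $\mathbb{CP}^1$, together with the properness of $\Dou_r$ that allows the resulting section to be extended across the finitely many bad fibres.
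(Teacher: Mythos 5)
The paper does not actually prove this theorem: the forward implication is attributed to the proof of Lemma 7.3 in \cite{kaled-verbit} and the two converses to Theorem 4.1 of \cite{tomberg3}, so any self-contained argument is necessarily a different route from the paper's. Your opening dictionary is exactly the right reduction: for $I \in S_0$ every sheaf on $M_I$ has degree zero, so $E_I$ is stable if and only if it is irreducible, and since $S_0$ is co-countable it meets every cofinite (i.e.\ nonempty Zariski open) subset of $\mathbb{CP}^1$. With that, your proof of the first statement is correct and complete --- it is the Kaledin--Verbitsky argument.

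The two converse directions, however, contain genuine gaps. First, for the rank $2,3$ case you assert that non-simplicity of the generic fibre makes $\pi_*(\End E)$ a nontrivial \emph{commutative} $\mathcal{O}_{\mathbb{CP}^1}$-algebra; this is false in general (if $E_I \cong L^{\oplus 2}$ with $L$ stable then $\End(E_I) \cong M_2(\mathbb{C})$), and the ensuing dichotomy (``global idempotent or finite cover, and rank $\le 3$ limits the factorizations'') is gestured at rather than carried out. Compare the proof of Theorem \ref{thm:prime} in this paper, where precisely this situation is handled via the characteristic polynomial of a twisted endomorphism $F : E \to E(D)$ over $K(\mathbb{CP}^1)$, the branched cover splitting it, and the Picard computation of Section \ref{sect:twmx}; that is roughly the amount of work your one sentence is standing in for. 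Second, the steps you yourself flag as ``the technical heart'' are the entire content of the hard direction: the relative Douady space of quotients of $E$ along the fibres of $\pi$ is not globally proper (only its irreducible components are, and since $\Tw(M)$ is merely balanced, not K\"ahler, even the compactness of those components requires an argument, e.g.\ Bishop-type volume bounds with respect to the balanced metric); and the socle of $E_I$, while canonical fibre by fibre and nonzero proper when $E_I$ is simple and strictly semistable, must still be shown to have locally constant invariants over a dense open subset of $\mathbb{CP}^1$ and to assemble into a coherent subsheaf of $E$ there, before being extended across the finitely many bad fibres to contradict irreducibility. As written, the proposal establishes the first statement and lays out a plausible but incomplete programme for the other two.
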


\begin{proof}
The forward implication follows from the proof of Lemma 7.3 in \cite{kaled-verbit}. The two partial
converses are proved in Theorem 4.1 of \cite{tomberg3} and also Theorem 4.2.1 of \cite{tomberg1}.
\end{proof}

In Section \ref{sect:prime}, we shall strengthen this result by showing that the converse holds 
for arbitrary vector bundles of prime rank. On the other hand, 
there are examples of irreducible bundles of any composite rank on 
$\Tw(M)$, for $M$ a K3 surface, which are not generically fibrewise stable (this
is proved in Section \ref{sect:composite}).

\section{Semistability of bundles and holomorphic connections on 
$\Tw(M)$}\label{sect:hol-conn-semistab}

\subsection{Semistability and restriction to twistor lines}

Let $M$ be a compact hyperk\"ahler manifold, and $\Tw(M)$ its twistor space. Let
${\mathcal D}$ denote the component of the Douady space for $\Tw(M)$ that contains
the horizontal twistor lines. Let
\begin{equation}\label{e3}
\Sec(\pi)\, \subset\, {\mathcal D}
\end{equation}
be the Zariski open subset consisting of the holomorphic sections of the projection
$\pi\, :\, \Tw(M)\, \longrightarrow\, {\mathbb C}{\mathbb P}^1$, that is, twistor lines in $\Tw(M)$.

The following theorem does not require $M$ to be simply connected.

\begin{theorem}\label{thm1}
Let $E$ be a torsionfree coherent analytic sheaf on $\Tw(M)$.

If for some element $s\, \in\, \Sec(\pi)$, the pulled back coherent analytic sheaf
$s^*E\, \longrightarrow\, {\mathbb C}{\mathbb P}^1$ is torsionfree and semistable, then
$E$ is semistable.

If $E$ is semistable, then one of the following two holds:
\begin{itemize}
\item There is a nonempty Zariski open subset ${\mathcal U}_s\, \subset\, \Sec(\pi)$ such that
for all $s\, \in\, {\mathcal U}_s$, the pulled back coherent analytic sheaf
$s^*E\, \longrightarrow\, {\mathbb C}{\mathbb P}^1$ is
torsionfree and semistable.

\item For all element $s\, \in\, \Sec(\pi)$ such that the pulled back coherent analytic sheaf
$s^*E\, \longrightarrow\, {\mathbb C}{\mathbb P}^1$ is torsionfree, the vector bundle
$s^*E$ is not semistable. Furthermore, there is a nonempty Zariski open subset
${\mathcal U}^0_s\, \subset\, \Sec(\pi)$ such that
for all $s\, \in\, {\mathcal U}^0_s$, the pulled back coherent analytic sheaf
$s^*E\, \longrightarrow\, {\mathbb C}{\mathbb P}^1$ is
torsionfree, and if
$$
0\,=\, {\mathcal E}_0\, \subsetneq\, {\mathcal E}_1\, \subsetneq\, {\mathcal E}_2\, \subsetneq\,
\cdots\, \subsetneq\, {\mathcal E}_{n-2}\, \subsetneq\, {\mathcal E}_{n-1}\, \subsetneq\,
{\mathcal E}_n\,=\, s^*E
$$
is the Harder--Narasimhan filtration of $s^*E$ (which is not semistable by the first
sentence), then
$$
\mu({\mathcal E}_i/{\mathcal E}_{i-1}) \,=\, \mu({\mathcal E}_{i+1}/{\mathcal E}_{i}) +1
$$
for all $1\, \leq\, i\,\leq\, n-1$.
\end{itemize}
\end{theorem}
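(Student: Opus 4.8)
The plan is to reduce every slope comparison to a degree computation on $\mathbb{CP}^1$ by means of the divisor correspondence of Proposition \ref{thm:divisors}, which is the tool shared by both assertions. For any subsheaf or quotient $\mathcal{G}$ appearing below, the determinant $\det\mathcal{G}$ is a line bundle on $\Tw(M)$, so by Proposition \ref{thm:divisors} one has $\det\mathcal{G}\,\cong\,\pi^*\mathcal{O}_{\mathbb{CP}^1}(d(\mathcal{G}))$ for a unique integer $d(\mathcal{G})$. Since $\pi\circ s\,=\,\id$, pulling back gives $s^*\det\mathcal{G}\,\cong\,\mathcal{O}_{\mathbb{CP}^1}(d(\mathcal{G}))$, and functoriality of $c_1$ yields $\deg_{\mathbb{CP}^1}(s^*\mathcal{G})\,=\,d(\mathcal{G})$ for \emph{every} $s\,\in\,\Sec(\pi)$. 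Writing $V\,=\,\int_{\Tw(M)}\pi^*[\mathrm{pt}]\wedge\omega^{n-1}\,>\,0$, the definition of degree on $\Tw(M)$ gives $\deg_{\Tw(M)}\mathcal{G}\,=\,V\,d(\mathcal{G})$, hence the proportionality $\mu_{\Tw(M)}(\mathcal{G})\,=\,V\,\mu_{\mathbb{CP}^1}(s^*\mathcal{G})$. In particular $\mu(s^*E)\,=\,d(E)/\rk E$ is independent of $s$.

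For the first assertion I would argue by contrapositive using the maximal destabilizing subsheaf. Suppose $E$ is not semistable and let $\mathcal{G}\,\subset\, E$ be its maximal destabilizing subsheaf; it is saturated, so $Q\,:=\,E/\mathcal{G}$ is torsionfree on $\Tw(M)$ and $\mu_{\Tw(M)}(\mathcal{G})\,>\,\mu_{\Tw(M)}(E)$. Applying $s^*$ to $0\to\mathcal{G}\to E\to Q\to 0$, right-exactness keeps $s^*E\to s^*Q$ surjective, so the image of $s^*\mathcal{G}$ in $s^*E$ is $\ker(s^*E\to s^*Q)$, of rank $\rk\mathcal{G}$ and degree $\deg s^*E-\deg s^*Q\,=\,d(E)-d(Q)\,=\,d(\mathcal{G})$ by additivity of $c_1$. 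Thus $s^*E$ contains a subsheaf of slope $d(\mathcal{G})/\rk\mathcal{G}\,=\,\mu_{\Tw(M)}(\mathcal{G})/V\,>\,\mu_{\Tw(M)}(E)/V\,=\,\mu(s^*E)$, so $s^*E$ is not semistable. As this uses only the given $s$, it proves the first statement.

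Assume now $E$ is semistable. For generic $s$ the line $s(\mathbb{CP}^1)$ avoids the (codimension $\ge 2$) singular locus of $E$, so $s^*E$ is a vector bundle, hence $\bigoplus_i\mathcal{O}(a_i)$; by upper semicontinuity of the Harder–Narasimhan type over $\Sec(\pi)$, the splitting type is constant on a nonempty Zariski open $\mathcal{U}^0_s$. If this generic type is balanced we land in the first alternative. Otherwise, since $\mu(s^*E)$ is constant in $s$ while $\mu_{\max}(s^*E)$ is upper semicontinuous and equals the generic top slope $b_1\,>\,\mu(s^*E)$, every $s$ with $s^*E$ torsionfree satisfies $\mu_{\max}(s^*E)\,\ge\, b_1\,>\,\mu(s^*E)$ and is non-semistable, which is the second alternative. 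Writing the distinct HN slopes as integers $b_1\,>\,b_2\,>\,\cdots$, integrality on $\mathbb{CP}^1$ immediately gives $b_i\,\ge\, b_{i+1}+1$.

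It remains to prove the reverse inequality $b_i\,\le\, b_{i+1}+1$, and this is where the twistor geometry enters and where I expect the main obstacle to lie. The crucial input is that a generic twistor line has normal bundle $N_s\,\cong\,\mathcal{O}_{\mathbb{CP}^1}(1)^{\oplus(n-1)}$. For $1\le i\le n-1$ the subsheaf $\mathcal{E}_i\,\subset\, s^*E$ formed by the top $i$ graded pieces has $\mu(\mathcal{E}_i)\,>\,\mu(s^*E)$, so if it extended to a subsheaf of $E$ on $\Tw(M)$ it would, by the proportionality of the first paragraph, destabilize $E$; hence semistability of $E$ forces $\mathcal{E}_i$ \emph{not} to descend along the family $\Sec(\pi)$. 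The obstruction to descending $\mathcal{E}_i$ is the Kodaira–Spencer/second-fundamental-form class of the universal twistor line, in which $N_s$ enters as a positive twist, and its non-vanishing produces a nonzero homomorphism of the semistable piece $\mathcal{E}_i/\mathcal{E}_{i-1}$ into $(\mathcal{E}_{i+1}/\mathcal{E}_i)\otimes N_s$, a semistable bundle of slope $b_{i+1}+1$. The slope inequality for a nonzero map between semistable sheaves on $\mathbb{CP}^1$ then gives $b_i\,\le\, b_{i+1}+1$, and together with $b_i\,\ge\, b_{i+1}+1$ this is the asserted equality. The hard part is precisely to make this last step rigorous: to identify the descent obstruction with an $N_s$-twisted homomorphism between consecutive graded pieces and to verify that the twist is by the \emph{positive} bundle $N_s$, so that the degree shift is exactly $\deg\mathcal{O}_{\mathbb{CP}^1}(1)\,=\,1$, which is what converts non-descent into the clean relation $\mu(\mathcal{E}_i/\mathcal{E}_{i-1})\,=\,\mu(\mathcal{E}_{i+1}/\mathcal{E}_i)+1$.
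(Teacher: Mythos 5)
Your overall architecture coincides with the paper's: reduce slopes on $\Tw(M)$ to slopes on $\mathbb{CP}^1$ via a degree formula, use openness/semicontinuity of the Harder--Narasimhan type over $\Sec(\pi)$ to set up the dichotomy, get $\mu_i\ge\mu_{i+1}+1$ from integrality on $\mathbb{CP}^1$, and get the reverse inequality from the non-vanishing of the deformation map of ${\mathcal E}_i$ twisted by the normal bundle ${\mathcal O}_{\mathbb{CP}^1}(1)^{\oplus 2d}$ of a twistor line. The step you flag as ``the hard part'' is exactly the paper's mechanism: the homomorphisms $\Psi^s_i\,:\,\ker(\varepsilon)\otimes{\mathcal E}_i\to (s^*E)/{\mathcal E}_i$, with $\ker(\varepsilon)\cong{\mathcal O}_{\mathbb{CP}^1}(-1)^{\oplus 2d}$, whose simultaneous vanishing for all $s$ would make ${\mathcal E}_i$ descend to a subsheaf ${\mathbb E}\subset E$ contradicting semistability; so your sketch of that step is on target, though you leave it unexecuted.

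There is, however, a concrete error at the foundation. You assert that $\det{\mathcal G}\cong\pi^*{\mathcal O}_{\mathbb{CP}^1}(d({\mathcal G}))$ for every subsheaf ${\mathcal G}$, invoking Proposition \ref{thm:divisors}. This is false: by Proposition \ref{thm:picard}, $\Pic\Tw(M)\cong\Pic\mathbb{CP}^1\oplus H^2(M,\mathbb{Z})_{\mathrm{inv}}$, so $\det{\mathcal G}$ generally carries a nontrivial hyperholomorphic component (Proposition \ref{thm:divisors} classifies divisors, not line bundles --- $\Tw(M)$ is non-algebraic and most line bundles have no meromorphic sections). The proportionality $\deg_{\Tw(M)}{\mathcal G}=\mathrm{Vol}(M)\cdot\deg(s^*{\mathcal G})$ is nonetheless true, but for a different reason: the restriction of $c_1({\mathcal G})$ to every fibre $M_I$ is ${\rm SU}(2)$-invariant (Lemma \ref{thm:invariant}), hence contributes zero degree on each fibre by Lemma \ref{thm:degr}, so only the ${{}H}^2(\mathbb{CP}^1)$-component survives the integration against $\omega^{n-1}$. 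A second, smaller gap: in your contrapositive argument for the first assertion you compute $\rk$ and $\deg$ of $\ker(s^*E\to s^*Q)$ for the \emph{given} $s$, but if $s(\mathbb{CP}^1)$ lies in (or meets badly) the singular loci of ${\mathcal G}$ and $Q$, the naive pullback acquires rank jumps and $\mathrm{Tor}$ corrections and the identities $\rk=\rk{\mathcal G}$, $\deg=d({\mathcal G})$ can fail. The paper avoids this by first invoking openness of semistability \cite[Theorem~2.8(B)]{Ma} to pass from the given $s_0$ to a general $s$, for which the destabilizing-subsheaf comparison is clean; you should insert the same reduction.
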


\begin{proof}
Since $\Tw(M)$ is topologically the product of $M$ and $\mathbb{CP}^1$, we have
\begin{equation}\label{a1}
{{}H}^2(\Tw(M),\, \mathbb{R})\,=\, {{}H}^2(M,\, \mathbb{R})\oplus
{{}H}^2({\mathbb C}{\mathbb P}^1,\, \mathbb{R})\, .
\end{equation}
For any torsionfree coherent analytic
sheaf $\mathcal F$ on $\Tw(M)$, it is clear that all its restrictions $\mathcal F_I \,=\,
\mathcal F\vert_{\pi^{-1}(I)}$ for all $I\, \in\, {\mathbb C}{\mathbb P}^1$ have the same
first Chern class, and hence the harmonic representative of
$c_1(\mathcal F_I) \,\in\, {{}H}^2(M,\, \mathbb{Z})$ is ${\rm SU(2)}$-invariant as
a consequence of Lemma \ref{thm:invariant}. From Lemma \ref{thm:degr}, and the
discussion following it, we know that the degree of $\mathcal{F}_I$ is zero for
all $I\, \in\, {\mathbb C}{\mathbb P}^1$. Consequently, using \eqref{a1} it follows that
\begin{equation}\label{e4}
\deg({\mathcal F})\, =\, \deg(s^*{\mathcal F})\cdot \text{Vol}(M)\, ,
\end{equation}
where $s\, :\, {\mathbb C}{\mathbb P}^1\, \longrightarrow\, \Tw(M)$ is any element of
$\Sec(\pi)$ defined in \eqref{e3}.

{}From \eqref{e4} it follows immediately that $E$ is semistable if for the general element 
$s\, \in\, \Sec(\pi)$, the pulled back coherent analytic sheaf $s^*E\, 
\longrightarrow\, {\mathbb C}{\mathbb P}^1$ is torsionfree and semistable. Now the 
openness of the semistability condition (see \cite[p.~635, Theorem~2.8(B)]{Ma} for it), implies
that if $s^*_0E$ is torsionfree and semistable for some $s_0\, 
\in\, \Sec(\pi)$, then $s^*E$ is torsionfree and semistable for the general element 
$s\, \in\, \Sec(\pi)$. Therefore, $E$ is semistable if $s^*E$ is torsionfree and 
semistable for some $s\, \in\, \Sec(\pi)$.

Now assume that $E$ is semistable. Consider all $s\, \in\, \Sec(\pi)$
such that the pulled back coherent analytic sheaf
$s^*E\, \longrightarrow\, {\mathbb C}{\mathbb P}^1$ is torsionfree. Their
locus is a nonempty Zariski open subset of $\Sec(\pi)$. This Zariski open
subset of $\Sec(\pi)$ will be denoted by ${\mathcal D}^1$.

Assume that for some $s_0\,\in\, {\mathcal D}^1$, the torsionfree sheaf $s^*_0E$ is semistable. 
Then from the openness of semistability condition it follows that there is a nonempty Zariski 
open subset ${\mathcal U}_s\, \subset\, {\mathcal D}^1$ such that for all $s\, \in\, {\mathcal 
U}_s$, the pulled back coherent analytic sheaf $s^*E\, \longrightarrow\, {\mathbb C}{\mathbb 
P}^1$ is torsionfree and semistable.

Therefore, assume that $s^*_0E$ is not semistable for every $s_0\,\in\, {\mathcal D}^1$.
Consequently, there is a nonempty Zariski open subset
$$
{\mathcal U}'_s\, \subset\, {\mathcal D}^1
$$
such that for every $s_0\, \in\, {\mathcal U}'_s$, the Harder--Narasimhan
filtration of $s_0^*E$ is independent of $s_0$. In other words,
the Harder--Narasimhan filtrations of all $s_0^*E$, $s_0\, \in\, {\mathcal U}'_s$,
have equal length, and the rank and degree of the $i$--th term in the filtration
are independent of $s_0\, \in\, {\mathcal U}'_s$ for all $i$.

Take an element $s\, \in\, {\mathcal U}'_s\, .$ Let
\begin{equation}\label{hn2}
0\,=\, {\mathcal E}_0\, \subsetneq\, {\mathcal E}_1\, \subsetneq\, {\mathcal E}_2\, \subsetneq\,
\cdots\, \subsetneq\, {\mathcal E}_{n-2}\, \subsetneq\, {\mathcal E}_{n-1}\, \subsetneq\,
{\mathcal E}_n\,=\, s^*E
\end{equation}
be the Harder--Narasimhan filtration of $s^*E$.

Let $\mathcal N$ be the normal bundle of
$s({\mathbb C}{\mathbb P}^1)\, \subset\,\Tw(M)$. We know that $${\mathcal N}\,=\,
{\mathcal O}_{{\mathbb C}{\mathbb P}^1}(1)^{\oplus 2d}\, ,$$ where $4d$ is the real dimension
of $M$ (see \cite[p.~142, Theorem 1(2)]{hitchin2}). From this it follows
that the evaluation homomorphism
$$
\varepsilon\, :\, {\mathbb C}{\mathbb P}^1\times {{}H}^0({\mathbb C}{\mathbb P}^1,\,
{\mathcal N})\, \longrightarrow\, {\mathcal N}\, ,
$$
that sends any $(x,\, v)\, \in\,{\mathbb C}{\mathbb P}^1\times {{}H}^0({\mathbb C}{\mathbb P}^1,\,
{\mathcal N})$ to $v(x)\, \in\, {\mathcal N}_x$, is surjective, and moreover
\begin{equation}\label{dt}
\text{kernel}(\varepsilon)\,=\, 
{\mathcal O}_{{\mathbb C}{\mathbb P}^1}(-1)^{\oplus 2d}\, .
\end{equation}

For every $1\, \leq\, i\, \, \leq\, n-1$, there is a natural homomorphism
\begin{equation}\label{dh}
\Psi^s_i\, :\, \text{kernel}(\varepsilon)\otimes {\mathcal E}_i\, \longrightarrow\,
(s^*E)/{\mathcal E}_i
\end{equation}
(see \eqref{hn2}); these homomorphisms $\Psi^s_i$ correspond to the infinitesimal deformation
of the subsheaves of the Harder--Narasimhan filtrations.

From the given condition that $E$ is semistable it can be deduced that
the homomorphism $\Psi^s_i$ is not identically zero for the general element
$s\, \in\, {\mathcal U}'_s$. Indeed, if $\Psi^s_i\,=\, 0$ for all
$s\, \in\,{\mathcal U}'_s$, then there is a subsheaf
$${\mathbb E}\, \subset\, E$$ such that ${\mathcal E}_i\,=\,s^*{\mathbb E}$
for all $s\, \in\, {\mathcal U}'_s$. Now using \eqref{e4}, and the properties
of the Harder--Narasimhan filtration, it follows that
the subsheaf $\mathbb E$ of $E$ contradicts the semistability condition for $E$.

Since the homomorphism $\Psi^s_i$ in \eqref{dh} is nonzero for the general
element $s\, \in\, {\mathcal U}'_s$, using \eqref{dt} it follows that
\begin{equation}\label{eh}
\mu({\mathcal E}_i/{\mathcal E}_{i-1}) \,=\, \mu({\mathcal E}_{i+1}/{\mathcal E}_{i}) +1
\end{equation}
for all $s\, \in\, {\mathcal U}'_s$ such that $\Psi^s_i\, \not=\, 0$. In other words,
there is a nonzero Zariski open subset ${\mathcal U}^0_s\, \subset\, {\mathcal U}'_s$
such that \eqref{eh} holds. This completes the proof of the theorem.
\end{proof}

A simply connected compact hyperk\"ahler manifold is called irreducible if it
is not a product of hyperk\"ahler manifolds.

\begin{proposition}\label{prop3}
Let $M$ be a compact simply connected irreducible 
hyperk\"ahler manifold. Then the holomorphic
tangent bundle ${{}T}^{1,0}\Tw(M)$ of the twistor space $\Tw(M)$ is stable.
\end{proposition}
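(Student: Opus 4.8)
The plan is to exploit the relative tangent sequence of the holomorphic projection $\pi$,
$$0\,\longrightarrow\, T_\pi\,\longrightarrow\, T^{1,0}\Tw(M)\,\longrightarrow\, \pi^*T\mathbb{CP}^1\,\longrightarrow\, 0,$$
where $T_\pi$ is the relative tangent bundle (the tangent along the fibres $M_I$). First I would record the restrictions of these three pieces to a twistor line $s\,\in\,\Sec(\pi)$. Since a section is transverse to the fibres, $d\pi$ identifies $s^*T_\pi$ with the normal bundle $\mathcal N$ of $s(\mathbb{CP}^1)$; by the computation recalled around \eqref{dt} this gives $s^*T_\pi\,\cong\,\mathcal{O}_{\mathbb{CP}^1}(1)^{\oplus 2d}$ and $s^*(\pi^*T\mathbb{CP}^1)\,\cong\,\mathcal{O}_{\mathbb{CP}^1}(2)$, so that $s^*T^{1,0}\Tw(M)\,\cong\,\mathcal{O}_{\mathbb{CP}^1}(2)\oplus\mathcal{O}_{\mathbb{CP}^1}(1)^{\oplus 2d}$. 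Feeding these degrees into \eqref{e4} yields $\mu(T_\pi)=\text{Vol}(M)$, $\mu(\pi^*T\mathbb{CP}^1)=2\,\text{Vol}(M)$ and $\mu(T^{1,0}\Tw(M))=\tfrac{2d+2}{2d+1}\text{Vol}(M)$; note that $\text{Vol}(M)<\mu(T^{1,0}\Tw(M))<2\,\text{Vol}(M)$, and that by \eqref{e4} every degree occurring on $\Tw(M)$ is an integer multiple of $\text{Vol}(M)$.

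The second step is to prove that the subbundle $T_\pi$ is itself stable. Its restriction to a fibre $M_I$ is the holomorphic tangent bundle $T^{1,0}M_I$, which is stable: since $M$ is compact, simply connected and irreducible, its holonomy is the full group $\mathrm{Sp}(d)$, the Levi--Civita connection makes $T^{1,0}M_I$ Hermitian--Einstein of slope $0$ (hence polystable), and the irreducibility of the holonomy representation forces it to be simple, hence stable. Thus $T_\pi$ is (generically) fibrewise stable, and the forward implication of Theorem \ref{thm:result} shows that $T_\pi$ is irreducible, hence stable on $\Tw(M)$.

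Now suppose for contradiction that $T^{1,0}\Tw(M)$ is not stable, and let $\mathcal F\subset T^{1,0}\Tw(M)$ be a saturated destabilizing subsheaf of rank $r$ with $0<r<2d+1$ and $\mu(\mathcal F)\ge\mu(T^{1,0}\Tw(M))$. Set $\mathcal K=\mathcal F\cap T_\pi$ and let $\mathcal G$ be the image of $\mathcal F$ in $\pi^*T\mathbb{CP}^1$, so $\deg\mathcal F=\deg\mathcal K+\deg\mathcal G$ with $\deg\mathcal G\le\deg(\pi^*T\mathbb{CP}^1)=2\,\text{Vol}(M)$. If $\mathcal G=0$ then $\mathcal F\subseteq T_\pi$ and $\mu(\mathcal F)\le\mu(T_\pi)=\text{Vol}(M)<\mu(T^{1,0}\Tw(M))$, a contradiction; so $\rk\mathcal G=1$ and $\rk\mathcal K=r-1$. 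If moreover $r\ge 2$, then $\mathcal K$ is a nonzero proper subsheaf (of rank $r-1\le 2d-1$) of the stable bundle $T_\pi$, so stability of $T_\pi$ gives $\deg(s^*\mathcal K)<r-1$, i.e.\ $\deg(s^*\mathcal K)\le r-2$ by integrality; combined with $\deg(s^*\mathcal G)\le 2$ this yields $\deg(s^*\mathcal F)\le r$, whence $\mu(\mathcal F)\le\text{Vol}(M)<\mu(T^{1,0}\Tw(M))$, again a contradiction.

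The remaining case $r=1$ is where the real work lies, and I expect it to be the main obstacle. Here $\mathcal F=\mathcal L$ is a saturated line subbundle, and to destabilize it must satisfy $\deg(s^*\mathcal L)\ge 2$; since $s^*\mathcal L\subset\mathcal{O}_{\mathbb{CP}^1}(2)\oplus\mathcal{O}_{\mathbb{CP}^1}(1)^{\oplus 2d}$ forces $\deg(s^*\mathcal L)\le 2$, we must have $\deg(s^*\mathcal L)=2$ for every $s$. But a degree-$2$ sub-line-bundle of $\mathcal{O}_{\mathbb{CP}^1}(2)\oplus\mathcal{O}_{\mathbb{CP}^1}(1)^{\oplus 2d}$ is exactly the $\mathcal{O}_{\mathbb{CP}^1}(2)$ summand, namely the tangent line $T(s(\mathbb{CP}^1))$. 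Thus $\mathcal L$ would be tangent to every twistor line through every point of $\Tw(M)$. This is impossible: through a generic point $x$ pass both the horizontal twistor line and non-horizontal ones, and these have distinct tangent directions at $x$ (the horizontal line is tangent to the $\mathbb{CP}^1$-direction, with vanishing $TM$-component, while a non-horizontal one has nonzero $TM$-component), so no single line $\mathcal L_x\subset T_x\Tw(M)$ can contain all of them. Equivalently, such an $\mathcal L$ would split the relative tangent sequence and produce a holomorphic horizontal line distribution complementary to $T_\pi$, contradicting the genuine variation of the complex structures $M_I$ along $\pi$. This rules out $r=1$ and completes the proof; the only delicate point, and the one I would scrutinise most carefully, is this last geometric non-degeneracy of the family of twistor lines.
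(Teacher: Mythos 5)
Your proof is correct and follows essentially the same route as the paper: the relative tangent sequence \eqref{z1}, stability of the fibres $T^{1,0}M_I$ feeding into Theorem \ref{thm:result} to control $T^{1,0}_{\pi}$, slope computations via restriction to twistor lines and \eqref{e4}, and finally the nontriviality of the twistor deformation to kill the rank-one case. The only cosmetic differences are that you handle the intermediate ranks by a degree--integrality estimate using stability of $T^{1,0}_{\pi}$ where the paper invokes its irreducibility to reduce at once to the two extreme cases, and your ``tangent to every twistor line'' formulation of the last step is precisely the non-splitting of the restricted sequence that the paper deduces from the Kodaira--Spencer homomorphism.
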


\begin{proof}
Consider the projection $\pi$ in \eqref{dpi}. We have the short exact sequence of
holomorphic vector bundles on $\Tw(M)$
\begin{equation}\label{z1}
0\, \longrightarrow\, {{}T}^{1,0}_{\pi}
\, \longrightarrow\, {{}T}^{1,0}{\Tw(M)} \, \stackrel{d\pi}{\longrightarrow}\,
\pi^*{{}T}^{1,0}{{\mathbb C}{\mathbb P}^1} \, \longrightarrow\, 0\, ,
\end{equation}
where ${{}T}^{1,0}_{\pi}$ is the relative holomorphic tangent bundle for the
projection $\pi$, and $d\pi$ is the differential of $\pi$.

Firstly, the relative tangent bundle ${{}T}^{1,0}_{\pi}$ is irreducible. Indeed, for any $I\, \in\, {\mathbb C}{\mathbb P}^1$, the restriction ${{}T}^{1,0}_{\pi}\vert_{\pi^{-1}(I)}$ is stable, because $M$ is simply connected and irreducible; note that $M_I \,=\, \pi^{-1}(I)$ admits a K\"ahler--Einstein metric \cite{yau} (Calabi's conjecture). Hence the vector bundle ${{}T}^{1,0}_{\pi}$ is fibrewise stable, and by the forward implication of Theorem \ref{thm:result}, we conclude that it is irreducible.

Assume that ${{}T}^{1,0}{\Tw(M)}$ is not stable. Let
$$
F\, \subsetneq\, {{}T}^{1,0}{\Tw(M)}
$$
be a nonzero subsheaf such that $({{}T}^{1,0}{\Tw(M)})/F$ is torsionfree and
$$
\mu(F)\, \geq\, \mu({{}T}^{1,0}{\Tw(M)})\, .
$$
Since ${{}T}^{1,0}_{\pi}$ is irreducible, from \eqref{z1} we now conclude the following:
\begin{enumerate}
\item either $F\,=\, {{}T}^{1,0}_{\pi}$, or

\item $F$ is a subsheaf of ${{}T}^{1,0}{\Tw(M)}$ of rank one such that the composition
$$
F\, \hooklongrightarrow\, {{}T}^{1,0}\Tw(M)
\, \stackrel{d\pi}{\longrightarrow}\, 
\pi^*{{}T}^{1,0}{{\mathbb C}{\mathbb P}^1}
$$
is not identically zero.
\end{enumerate}

Firstly observe that $\mu({{}T}^{1,0}_{\pi})\, <\, \mu({{}T}^{1,0}{\Tw(M)})$,
because the slope of the restriction of ${{}T}^{1,0}_{\pi}$ to a horizontal twistor
line is strictly less than the slope of the restriction of ${{}T}^{1,0}\Tw(M)$ to a horizontal twistor
line. Therefore, ${{}T}^{1,0}_{\pi}$ does not
destabilize ${{}T}^{1,0}{\Tw(M)}$.

Secondly, it can be shown that there is no rank one subsheaf
$$F'\, \subset\, {{}T}^{1,0}{\Tw(M)}$$ such that the composition
$$
F\, \hooklongrightarrow\, {{}T}^{1,0}\Tw(M)\, \stackrel{d\pi}{\longrightarrow}\, 
\pi^*{{}T}^{1,0}{{\mathbb C}{\mathbb P}^1}
$$
is not identically zero. To prove this, restrict the exact sequence in \eqref{z1}
to $M_I = \pi^{-1}(I)$. This produces the short exact sequence
$$
0\, \longrightarrow\, {{}T}^{1,0} M_I
\, \longrightarrow\, ({{}T}^{1,0}{\Tw(M)})\vert_{M_I} \, \stackrel{d\pi}{\longrightarrow}\,
M_I\times T^{1,0}_I{\mathbb C}{\mathbb P}^1 \, \longrightarrow\, 0\, ,
$$
where $M_I\times T^{1,0}_I{\mathbb C}{\mathbb P}^1 \, \longrightarrow\,
M_I$ is the trivial holomorphic line bundle with fibre $T^{1,0}_I{\mathbb C}{\mathbb P}^1$.
This short exact sequence does not split holomorphically. Indeed, the obstruction class
to its splitting, which lies in $\text{Hom}(T_I{\mathbb C}{\mathbb P}^1,\,
{{}H}^1(M_I,\, TM_I))$, is the Kodaira--Spencer homomorphism for the family $\Tw(M)$.
Consequently, there is no
rank one subsheaf
$$F'\, \subset\, {{}T}^{1,0}{\Tw(M)}$$ such that the composition
$$
F\, \hooklongrightarrow\, {{}T}^{1,0}\Tw(M)\, \stackrel{d\pi}{\longrightarrow}\, 
\pi^*{{}T}^{1,0}{{\mathbb C}{\mathbb P}^1}
$$
is not identically zero. This completes the proof.
\end{proof}

\begin{remark}
Note that the restriction of $T^{1,0}\Tw(M)$ to a twistor line decomposes as
${\mathcal O}_{{\mathbb C}{\mathbb P}^1}(2)\bigoplus
{\mathcal O}_{{\mathbb C}{\mathbb P}^1}(1)^{\oplus 2d}$. Also, \cite{tomberg2} gives
an example of a stable rank 2 bundle on $\Tw(M)$ for $M$ a K3 surface whose restriction
to a twistor line is not semistable.
Therefore, the converse of the first part of Theorem \ref{thm1} does not hold.
\end{remark}

\subsection{Holomorphic connections}

Let $E$ be a holomorphic vector bundle on a complex manifold $Z$. A \textit{holomorphic connection}
on $E$ is a holomorphic differential operator
$$
D\, :\, E\, \longrightarrow\, E\otimes\Omega^{1,0}_Z
$$
of order one satisfying the Leibniz identity which says that
$$
D(fs)\,=\, f\cdot D(s)+ s\otimes df\, ,
$$
where $s$ is any locally defined holomorphic section of $E$ and $f$ is any locally defined 
holomorphic function on $Z$ \cite{At}. Let $\overline{\partial}_E\, :\, E\, \longrightarrow\, 
E\bigotimes\Omega^{0,1}_Z$ be the Dolbeault operator defining the holomorphic structure on $E$. 
Then for any holomorphic connection $D$ on $E$, the differential operator 
$D+\overline{\partial}_E$ is a usual connection on the holomorphic vector bundle $E$. Since the 
differential operator $D$ is holomorphic, the curvature $(D+\overline{\partial}_E)^2$ of the 
connection $D+\overline{\partial}_E$ is a holomorphic section of 
$\text{End}(E)\bigotimes\Omega^{2,0}_Z$.

As before, let $M$ be a compact hyperk\"ahler manifold and $\Tw(M)$ the corresponding
twistor space. For the following proposition we do not assume that $M$ is simply connected.

\begin{proposition}\label{prop1}
Let $E$ be a holomorphic vector bundle on $\Tw(M)$ equipped with a holomorphic
connection $D$. Then the curvature of $D$ vanishes identically.
\end{proposition}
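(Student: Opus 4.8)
The plan is to show that the curvature $\Theta\,=\,(D+\overline{\partial}_E)^2$, which by the discussion preceding the statement is a holomorphic section of $\End(E)\otimes\Omega^{2,0}_{\Tw(M)}$, vanishes identically. The first thing I would record is that the mere existence of a holomorphic connection forces the Atiyah class of $E$ to vanish, and hence all Chern classes $c_i(E)$ vanish in ${{}H}^{2i}(\Tw(M),\,\mathbb{C})$; in particular the Chern--Weil forms $\Tr(\Theta^{\wedge k})\,\in\, {{}H}^0(\Tw(M),\,\Omega^{2k,0}_{\Tw(M)})$, being holomorphic representatives of (multiples of) $\mathrm{ch}_k(E)$, represent zero in de Rham cohomology.

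The next step is a vanishing statement for global holomorphic forms on $\Tw(M)$. Dualizing the exact sequence \eqref{z1} gives
\[
0\,\longrightarrow\,\pi^*\Omega^{1}_{\mathbb{CP}^1}\otimes\Omega^1_\pi\,\longrightarrow\,\Omega^{2,0}_{\Tw(M)}\,\longrightarrow\,\Omega^2_\pi\,\longrightarrow\,0\,,
\]
where $\Omega^1_\pi\,=\,(T^{1,0}_\pi)^*$ and the vanishing $\wedge^2\pi^*\Omega^1_{\mathbb{CP}^1}\,=\,0$ has been used. Pushing forward by $\pi$, I would invoke two inputs: the fibrewise holomorphic symplectic forms of the $M_I$ assemble into a nowhere-degenerate section of $\Omega^2_\pi\otimes\pi^*\mathcal{O}_{\mathbb{CP}^1}(2)$, so that $\pi_*\Omega^2_\pi\,\cong\,\mathcal{O}_{\mathbb{CP}^1}(-2)$ has no sections; and the ${\rm SU}(2)$-action of Lemma \ref{thm:invariant} controls $H^{1,0}(M_I)$, making the contribution of $\pi_*\Omega^1_\pi$, twisted by $\mathcal{O}_{\mathbb{CP}^1}(-2)$, have no sections either. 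This yields $H^0(\Tw(M),\,\Omega^{p,0}_{\Tw(M)})\,=\,0$ for all $p\,\ge\,1$, so every Chern--Weil form $\Tr(\Theta^{\wedge k})$ vanishes as a form; in particular $\Tr\Theta\,=\,0$.

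The main obstacle is that vanishing of all the trace polynomials $\Tr(\Theta^{\wedge k})$ only shows that $\Theta$ is pointwise nilpotent: on $M$ itself (e.g.\ a flat complex torus, which is hyperk\"ahler) there genuinely exist holomorphic connections with nonzero nilpotent curvature, so one cannot conclude $\Theta\,=\,0$ from the trace identities alone. To close this gap I would exploit the full twistor geometry rather than just the invariants. Restricting $D+\overline{\partial}_E$ to any twistor line $s\,\in\,\Sec(\pi)$ produces a holomorphic connection on $s^*E$ over $\mathbb{CP}^1$, which is automatically flat and hence, by simple-connectivity of $\mathbb{CP}^1$, exhibits $s^*E$ as trivial with trivial connection; and restricting to a fibre $M_I$ decomposes $\Theta\vert_{M_I}$ into the curvature of the induced holomorphic connection on $E_I$ (the $\Omega^2_{M_I}$-part) and a mixed part in $\End(E_I)\otimes\Omega^1_{M_I}\otimes T^*_I\mathbb{CP}^1$. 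The plan is to combine flatness along the $4d$-dimensional family of twistor lines (whose normal bundle $\mathcal{O}_{\mathbb{CP}^1}(1)^{\oplus 2d}$, as in \eqref{dt}, makes the evaluation map onto $\Tw(M)$ surjective on tangent vectors in all directions) with the holomorphicity of $\Theta$ over the base $\mathbb{CP}^1$, whose only global functions are constants, so as to rigidify the family and force both the nilpotent relative part and the mixed part to vanish. I expect this rigidity step---promoting fibrewise and along-line information to the vanishing of the operator-valued form $\Theta$ itself---to be the crux of the argument.
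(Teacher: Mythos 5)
Your proposal does not close the argument: the step you yourself flag as ``the crux''---promoting the fibrewise and along-line information to the vanishing of the full operator-valued form $\Theta$---is exactly the part that is missing, and the preceding two steps (vanishing of the Chern--Weil forms, hence pointwise nilpotency of $\Theta$) do not substitute for it, as you correctly observe. Flatness of $s^*D$ along a twistor line is automatic for dimension reasons and by itself says nothing about the components of $\Theta$ in the directions normal to the line; and the surjectivity of the evaluation map for the normal bundle only tells you that the twistor lines move in all directions, not that $\Theta$ vanishes on them. So as written the proof stops exactly where the real work begins.

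The mechanism that finishes the proof is much more direct and uses an ingredient you already have in hand but point in the wrong direction. Fix a horizontal twistor line $s$ and pull back ${\mathcal K}(D)$ not as a differential form restricted to $s(\mathbb{CP}^1)$ but as a section of the bundle $\End(s^*E)\otimes\bigwedge^2 s^*\Omega^{1,0}_{\Tw(M)}$; this retains all components of the curvature, normal directions included. You have already shown that $s^*E$ is holomorphically trivial (via flatness of $s^*D$ and simple connectivity of $\mathbb{CP}^1$), so $s^*{\mathcal K}(D)$ becomes an $r^2$-tuple of sections of $\bigwedge^2 s^*\Omega^{1,0}_{\Tw(M)}$. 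Since the normal bundle of the line is ${\mathcal O}_{\mathbb{CP}^1}(1)^{\oplus 2d}$ and $T^{1,0}\mathbb{CP}^1\cong{\mathcal O}_{\mathbb{CP}^1}(2)$, one has $s^*\Omega^{1,0}_{\Tw(M)}\cong{\mathcal O}_{\mathbb{CP}^1}(-1)^{\oplus 2d}\oplus{\mathcal O}_{\mathbb{CP}^1}(-2)$, whose second exterior power is a direct sum of line bundles of negative degree and hence has no nonzero global sections. Therefore $s^*{\mathcal K}(D)=0$, and since horizontal twistor lines pass through every point of $\Tw(M)$, the curvature vanishes identically. With this in place your Steps 1 and 2 (Atiyah class, Chern--Weil forms, vanishing of global holomorphic forms on $\Tw(M)$) are not needed at all.
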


\begin{proof}
Let
$$
{\mathcal K}(D)\,:=\, (D+\overline{\partial}_E)^2\, \in\, {{}H}^0(\Tw(M),\, \text{End}(E)\otimes\Omega^{2,0}_{\Tw(M)})
$$
be the curvature of the connection $D+\overline{\partial}_E$. To show that
${\mathcal K}(D)$ vanishes identically,
let
$$s \, :\, {\mathbb C}{\mathbb P}^1\, \longhookrightarrow\, \Tw(M)$$
be a horizontal twistor line. The holomorphic
vector bundle $s^* \Omega^{1,0}_{\Tw(M)}\, \longrightarrow\, {\mathbb C}{\mathbb P}^1$ will be denoted by
$\mathbb V$. We note that
\begin{equation}\label{e1}
s^*{\mathcal K}(D)\, \in\, {{}H}^0({\mathbb C}{\mathbb P}^1,\, \text{End}(s^* E)\otimes
\bigwedge\nolimits^2 \mathbb V)\,=\, {{}H}^0({\mathbb C}{\mathbb P}^1,\, (s^* \text{End}(E))\otimes
\bigwedge\nolimits^2 \mathbb V)\, ;
\end{equation}
to clarify, $s^*{\mathcal K}(D)$ is the pullback of the section ${\mathcal K}(D)$
and not the restriction of the differential form.

Now, $s^*D$ is a holomorphic connection on the holomorphic vector bundle $s^*E\, \longrightarrow\,
{\mathbb C}{\mathbb P}^1$. But any holomorphic connection on a Riemann surface $Y$ is flat (curvature vanishes
identically) because $\Omega^{2,0}_Y\,=\, 0$. Therefore, $(s^*E,\, s^*D)$ is given by a representation
of the fundamental group. Since ${\mathbb C}{\mathbb P}^1$ is simply connected, we conclude that
the holomorphic vector bundle $s^*E$ is holomorphically trivial. Fix a holomorphic trivialization
$${\mathcal O}_{\mathbb{CP}^1}^{\oplus r}\, \stackrel{\sim}{\longrightarrow}\, s^*E\, ,$$
where $r\, =\,\text{rank}(E)$. Using this trivialization,
$s^*{\mathcal K}(D)$ in \eqref{e1} is a holomorphic section
\begin{equation}\label{e2}
s^*{\mathcal K}(D)\, \in\, {{}H}^0({\mathbb C}{\mathbb P}^1,\, \bigwedge\nolimits^2 {\mathbb V})^{\oplus r^2}\, .
\end{equation}

As before, let $\mathcal N$ denote the normal bundle of $s({\mathbb C}{\mathbb P}^1)\, \subset\,\Tw(M)$. Recall that ${\mathcal N}\,=\, {\mathcal O}_{{\mathbb C}{\mathbb P}^1}(1)^{\oplus 2d}$, where
$4d$ is the real dimension of $M$. From this it follows immediately that
$$
{\mathbb V}^*\, =\,{\mathcal O}_{{\mathbb C}{\mathbb P}^1}(1)^{\oplus 2d}\oplus
{\mathcal O}_{{\mathbb C}{\mathbb P}^1}(2)\, .
$$
Therefore, we have
$$
{{}H}^0({\mathbb C}{\mathbb P}^1,\, \bigwedge\nolimits^2 {\mathbb V})\,=\, 0\, .
$$
Hence from \eqref{e2} it follows that $s^*{\mathcal K}(D)\,=\, 0$. This implies that
the curvature ${\mathcal K}(D)$ vanishes identically.
\end{proof}

\begin{corollary}\label{cor1}
Let $M$ be a simply connected compact hyperk\"ahler manifold. Let $(E,\, D)$ be a holomorphic bundle,
on the corresponding twistor space $\Tw(M)$, equipped with a holomorphic connection.
Then the vector bundle $E$ is holomorphically trivial and $D$ is the trivial connection on it.
\end{corollary}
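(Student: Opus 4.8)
The plan is to deduce everything from Proposition \ref{prop1}, which already supplies the essential input. Set $\nabla\,:=\,D+\overline{\partial}_E$, the $C^\infty$ connection on $E$ associated to the holomorphic connection $D$. By Proposition \ref{prop1} its curvature $\mathcal{K}(D)\,=\,\nabla^2$ vanishes identically, so $\nabla$ is a \emph{flat} connection on $E$.

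The next step is to record that $\Tw(M)$ is simply connected. Indeed, $\Tw(M)$ is diffeomorphic to the product $M\times S^2\,=\,M\times\mathbb{CP}^1$, and since $M$ is simply connected by hypothesis and $S^2\,\cong\,\mathbb{CP}^1$ is simply connected, we obtain $\pi_1(\Tw(M))\,\cong\,\pi_1(M)\times\pi_1(\mathbb{CP}^1)\,=\,1$. A flat connection over a simply connected manifold has trivial monodromy, so by the holonomy principle parallel transport is path-independent and produces a global frame $s_1,\,\ldots,\,s_r$ of $\nabla$-parallel sections, where $r\,=\,\rk(E)$; thus $\nabla s_i\,=\,0$ for every $i$.

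Finally I would split $\nabla s_i\,=\,Ds_i+\overline{\partial}_E s_i$ into its $(1,0)$- and $(0,1)$-parts. Since $\nabla s_i\,=\,0$, both summands vanish, so $\overline{\partial}_E s_i\,=\,0$ and $Ds_i\,=\,0$. The first identity says that each $s_i$ is a global holomorphic section of $E$, so $\{s_1,\,\ldots,\,s_r\}$ is a holomorphic frame and $E\,\cong\,\mathcal{O}_{\Tw(M)}^{\oplus r}$ holomorphically. The second identity says that $D$ annihilates this frame, so in the resulting trivialization $D$ is the trivial connection, which is exactly the assertion of Corollary \ref{cor1}.

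Since the hard content is packaged into Proposition \ref{prop1}, there is no genuine obstacle remaining; the only point requiring care is the simple connectivity of $\Tw(M)$, which I would justify via the diffeomorphism $\Tw(M)\,\approx\,M\times\mathbb{CP}^1$ recorded in the preliminaries, together with the hypothesis that $M$ is simply connected.
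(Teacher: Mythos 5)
Your argument is correct and is exactly the route the paper takes: its proof of Corollary \ref{cor1} consists of the single remark that, since $\Tw(M)$ is simply connected, the conclusion follows from Proposition \ref{prop1}. You have merely made explicit the standard details (global parallel frame from flatness and simple connectivity, then splitting $\nabla s_i\,=\,0$ into types) that the paper leaves to the reader.
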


\begin{proof}
Since $\Tw(M)$ is simply connected, this follows from Proposition \ref{prop1}.
\end{proof}

\section{Finite base extensions of the twistor projection} \label{sect:twmx}

Let $M$ be a compact simply connected hyperk\"ahler manifold, $\Tw(M)$ its twistor space and $\pi \,:\, \Tw(M)\,
\longrightarrow\,\mathbb{CP}^1$ the natural holomorphic twistor projection. In this section, we will examine
the fibre product
\begin{equation} \label{diagramma}
\xymatrix{\Tw(M)_X \ar[r]^-{\varphi} \ar[d]_{\pi_X} & \Tw(M) \ar[d]^\pi \\ X \ar[r]_f & \mathbb{CP}^1,}
\end{equation}
where $f \,:\, X \,\longrightarrow\, \mathbb{CP}^1$ is an arbitrary holomorphic branched cover of 
$\mathbb{CP}^1$ by a smooth projective curve $X$. Observe that we have $\pi_* \mathcal{O}_{\Tw(M)} \,=\, 
\mathcal{O}_{\mathbb{CP}^1}$, since the fibres $M_I$ of the map $\pi$ are connected, and for a
similar reason there is an isomorphism
 ${\pi_X}_* \mathcal{O}_{\Tw(M)_X} \,=\, \mathcal{O}_{X}$. Also, the maps $\pi$ and 
$\pi_X$ both induce embeddings of the corresponding Picard groups since they both admit
holomorphic sections.

We would like to relate the Picard group of $\Tw(M)_X$ to the Picard group of $\Tw(M)$. In general, Picard groups 
of fibred products cannot be described in a nice way in terms of the Picard groups of the factors, but in our 
particular case we do have such a description. We first describe $\Pic \Tw(M)$.

\begin{proposition}\label{thm:picard}
The following isomorphism
\[
\Pic \Tw(M) \cong \Pic \mathbb{CP}^1 \oplus {{}H}^2(M,\, \mathbb{Z})_{\mathrm{inv}}
\]
holds, where $${{}H}^2(M,\, \mathbb{Z})_{\mathrm{inv}}\,\subseteq\, {{}H}^2(M,\, \mathbb{Z})$$
is the subgroup of ${\rm SU}(2)$-invariant cohomology classes. More precisely, $\Pic \Tw(M)$ is
the direct sum of its subgroup $\pi^*\left(\Pic \mathbb{CP}^1\right)$ and the subgroup of
hyperholomorphic line bundles on $\Tw(M)$.
\end{proposition}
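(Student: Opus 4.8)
The plan is to read off $\Pic\Tw(M)$ from the first Chern class map via the exponential sheaf sequence, and then to identify its image using the smooth product structure of $\Tw(M)$ together with the hyperk\"ahler dictionary recalled in Section~\ref{sect:preliminaries}. The first step is to show that
\[
c_1\colon \Pic\Tw(M)\,\longrightarrow\, H^2(\Tw(M),\,\mathbb{Z})
\]
is injective, which by the exponential sequence reduces to proving $H^1(\Tw(M),\,\mathcal{O}_{\Tw(M)})=0$. I would obtain this from the Leray spectral sequence for $\pi$: one has $\pi_*\mathcal{O}_{\Tw(M)}=\mathcal{O}_{\mathbb{CP}^1}$ (as noted at the start of Section~\ref{sect:twmx}), while each fibre $M_I$ is compact K\"ahler with $b_1(M)=0$ (since $M$ is simply connected), so $H^1(M_I,\,\mathcal{O}_{M_I})=h^{0,1}(M_I)=0$ and hence $R^1\pi_*\mathcal{O}_{\Tw(M)}=0$ by base change. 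Thus $H^1(\Tw(M),\,\mathcal{O}_{\Tw(M)})=H^1(\mathbb{CP}^1,\,\mathcal{O}_{\mathbb{CP}^1})=0$. Since both $M$ and $\mathbb{CP}^1$ are simply connected, the K\"unneth theorem then gives a clean splitting $H^2(\Tw(M),\,\mathbb{Z})=H^2(M,\,\mathbb{Z})\oplus H^2(\mathbb{CP}^1,\,\mathbb{Z})$, in which the first summand is recovered as the restriction $\iota_I^*$ to any fibre $M_I$.

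Next I would pin down the image of $c_1$ inside this splitting. For a line bundle $L$ on $\Tw(M)$, its $H^2(M,\,\mathbb{Z})$-component equals $c_1(L_I)$ for every $I$, where $L_I=L|_{M_I}$; being the Chern class of a holomorphic line bundle on the K\"ahler manifold $M_I$, it is of type $(1,1)$ with respect to $I$. As this is the same class for all $I$, its unique $g$-harmonic representative is of type $(1,1)$ for every induced complex structure, hence ${\rm SU}(2)$-invariant by Lemma~\ref{thm:invariant}; so the image of $c_1$ lies in $H^2(M,\,\mathbb{Z})_{\mathrm{inv}}\oplus H^2(\mathbb{CP}^1,\,\mathbb{Z})$. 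For the reverse inclusion I would exhibit line bundles realizing each factor. The bundle $\pi^*\mathcal{O}_{\mathbb{CP}^1}(1)$ has $c_1=(0,\,\text{generator})$, its $M$-component vanishing because it restricts trivially to each fibre. Given $\alpha\in H^2(M,\,\mathbb{Z})_{\mathrm{inv}}$, I would build a hyperholomorphic line bundle with $M$-component $\alpha$: take a smooth line bundle $B$ on $M$ with $c_1(B)=\alpha$; since the ${\rm SU}(2)$-action preserves harmonic forms, $\alpha$ has an ${\rm SU}(2)$-invariant harmonic representative, which by Lemma~\ref{thm:invariant} is of type $(1,1)$ for all $I$. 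A Chern--Weil argument then produces a unitary connection $\nabla$ on $B$ whose curvature is a multiple of that invariant form, so $\nabla$ is hyperholomorphic, and the twistor construction of Section~\ref{sect:preliminaries} yields a line bundle $E$ on $\Tw(M)$. Restricting $E$ to a horizontal twistor line, along which the structure map $\sigma$ is constant, shows $E$ is topologically trivial there, so the $H^2(\mathbb{CP}^1,\,\mathbb{Z})$-component of $c_1(E)$ vanishes while its $M$-component is $\alpha$.

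Finally I would assemble the decomposition. Combining the two families, $c_1$ is an isomorphism onto $H^2(M,\,\mathbb{Z})_{\mathrm{inv}}\oplus H^2(\mathbb{CP}^1,\,\mathbb{Z})\cong H^2(M,\,\mathbb{Z})_{\mathrm{inv}}\oplus\Pic\mathbb{CP}^1$, which gives the stated isomorphism. For the sharper statement, $\pi^*(\Pic\mathbb{CP}^1)$ is carried isomorphically onto $\{0\}\oplus H^2(\mathbb{CP}^1,\,\mathbb{Z})$ (the map $\pi^*$ being injective because $\pi$ admits a section), while the hyperholomorphic line bundles, which form a subgroup under tensor product, are carried isomorphically onto $H^2(M,\,\mathbb{Z})_{\mathrm{inv}}\oplus\{0\}$; since $c_1$ is injective, these two subgroups meet trivially and together span $\Pic\Tw(M)$, yielding the internal direct sum. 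I expect the step requiring the most care to be the construction of a hyperholomorphic line bundle with prescribed invariant Chern class, namely realizing an invariant harmonic $(1,1)$-form as a curvature form and checking that the resulting bundle is trivial along the horizontal lines; the remainder is bookkeeping with the exponential sequence and the K\"unneth splitting.
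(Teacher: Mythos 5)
Your argument follows the paper's proof essentially step for step: vanishing of $H^1(\Tw(M),\,\mathcal{O}_{\Tw(M)})$ via Grauert's theorem and the Leray spectral sequence for $\pi$, the topological K\"unneth splitting of $H^2(\Tw(M),\,\mathbb{Z})$, ${\rm SU}(2)$-invariance of the fibrewise component of $c_1$ via Lemma~\ref{thm:invariant}, and realization of the two summands by $\pi^*(\Pic\mathbb{CP}^1)$ and by hyperholomorphic line bundles. The only cosmetic difference is that you produce the hyperholomorphic connection with prescribed invariant Chern class by a direct Chern--Weil argument, where the paper invokes Theorem~\ref{thm:c1c2} (for line bundles the two amount to the same thing); both are correct.
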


\begin{proof}
First, we will show that $\Pic \Tw(M)$ is discrete.

To prove this, note that since $M$ is simply connected, we have
$${{}H}^1(M,\, \mathbb{C}) \,=\, 0\, ,$$ and applying Hodge theory, for any induced complex structure
$I \,\in\, \mathbb{CP}^1$, we conclude that $${{}H}^{0,1}(M_I)\, =\, {{}H}^1(M_I, \,\mathcal{O}_{M_I})
\,=\, 0\, .$$
By Grauert's theorem (Theorem 10.5.5 in \cite{grauert-remmert}), it follows from this that ${{}R}^1\pi_*\mathcal{O}_{\Tw(M)} = 0$ and, as mentioned above, $\pi_* \mathcal{O}_{\Tw(M)} = \mathcal{O}_{\mathbb{CP}^1}$. So we have
\[
{{}H}^0(\mathbb{CP}^1,\, {{}R}^1\pi_*\mathcal{O}_{\Tw(M)}) \,=\,
{{}H}^1(\mathbb{CP}^1,\, \pi_*\mathcal{O}_{\Tw(M)}) \,=\, 0\, .
\]
Examining the Leray spectral sequence of the twistor projection $\pi \,:\, \Tw(M)
\,\longrightarrow\,
 \mathbb{CP}^1$ for the sheaf $\mathcal{O}_{\Tw(M)}$, we see that
\begin{equation}\label{cz}
{{}H}^1(\Tw(M), \, \mathcal{O}_{\Tw(M)}) \,= \,0\, .
\end{equation}
Next consider the exponential sequence of sheaves on $\Tw(M)$
$$
0\, \longrightarrow\, 2\pi\sqrt{-1}{\mathbb Z} \, \longrightarrow\, {\mathcal O}_{\Tw(M)}
\, \stackrel{\exp}{\longrightarrow}\, {\mathcal O}^*_{\Tw(M)} \, \longrightarrow\, 0\, .
$$
Let
$$
H^1(\Tw(M),\, {\mathcal O}_{\Tw(M)}) \, \longrightarrow\, \Pic \Tw(M)\,=\, H^1(\Tw(M),\, {\mathcal O}^*_{\Tw(M)})
$$
$$
\, \longrightarrow\, H^2(\Tw(M),\,2\pi\sqrt{-1}{\mathbb Z})
$$
be the corresponding long exact sequence of cohomologies. Using \eqref{cz} in this exact sequence we conclude
that the above homomorphism
$$
\Pic \Tw(M)\,=\, H^1(\Tw(M),\, {\mathcal O}^*_{\Tw(M)})
\, \longrightarrow\, H^2(\Tw(M),\,2\pi\sqrt{-1}{\mathbb Z})
$$
is injective. It follows from this that $\Pic \Tw(M)$ is discrete. More
precisely, the holomorphic structure of a holomorphic line bundle on $\Tw(M)$ is completely determined
by its underlying topological structure.

Since $\Tw(M)$ is topologically the product of $\mathbb{CP}^1$ and $M$, we have
\begin{equation} \label{razlozhenie}
{{}H}^2(\Tw(M),\, \mathbb{Z}) \,=\, {{}H}^2(\mathbb{CP}^1,\, \mathbb{Z})\oplus {{}H}^2(M,
\,\mathbb{Z})\, .
\end{equation}
As noted above, the group homomorphism $\pi^* \,:\, \Pic \mathbb{CP}^1 \,\longrightarrow\,
\Pic \Tw(M)$ is injective, and so we can think of $\Pic \mathbb{CP}^1 \,\cong\, \mathbb{Z}$ as a
subgroup of $\Pic \Tw(M)$; it corresponds to the first summand ${{}H}^2(\mathbb{CP}^1,\,
\mathbb{Z})$ in \eqref{razlozhenie}. On the other hand, it follows from Lemma
\ref{thm:invariant}, Theorem \ref{thm:c1c2} and the simple connectedness of $M$
that the group of hyperholomorphic line bundles on $M$ is isomorphic to ${{}H}^2(M,\,
\mathbb{Z})_{\mathrm{inv}}$. The corresponding hyperholomorphic line bundles on $\Tw(M)$ can
thus be identified with the subgroup ${{}H}^2(M,\, \mathbb{Z})_{\mathrm{inv}} \,\subseteq\,
{{}H}^2(M,\, \mathbb{Z})$ of the second summand in \eqref{razlozhenie}.

It only remains
to observe that for an arbitrary holomorphic line bundle $L$ on $\Tw(M)$, the second part
of $c_1(L)$ according to the decomposition \eqref{razlozhenie} lies in
${{}H}^2(M, \,\mathbb{Z})_{\mathrm{inv}}$. Indeed, observe that the restrictions $L_I$ of $L$
to the fibres $M_I \,=\, \pi^{-1}(I)$ of the projection $\pi \,:\, \Tw(M) \,
\longrightarrow\, \mathbb{CP}^1$ are all isomorphic topologically, and $c_1(L_I) \,\in\,
{{}H}^{1,1}(M_I) \cap {{}H}^2(M, \,\mathbb{Z})$, so $c_1(L_I)$ must be ${\rm SU}(2)$-invariant,
as a consequence of Lemma \ref{thm:invariant}.
\end{proof}

The maps $f$, $\pi$ in the diagram \eqref{diagramma} induce group homomorphisms from $\Pic 
\mathbb{CP}^1$ to $\Pic X$, $\Pic \Tw(M)$, respectively, and both of these are injective. Taking 
the product of these monomorphisms $\Pic \mathbb{CP}^1 \,\longrightarrow\, \Pic X \bigoplus \Pic 
\Tw(M)$, $L\, \longmapsto\, (f^*L,\, \pi^*L)$, we can thus think of $\Pic \mathbb{CP}^1$ 
as a subgroup of $\Pic X \bigoplus \Pic \Tw(M)$.

\begin{proposition}\label{thm:picard-x}
In the diagram \eqref{diagramma},
\[
\Pic \Tw(M)_X \,\cong\, \bigslant{(\Pic X \oplus \Pic \Tw(M))}{\Pic \mathbb{CP}^1}\, .
\]
\end{proposition}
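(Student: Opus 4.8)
The plan is to realize the claimed quotient as the cokernel of an explicit, natural homomorphism built from the pullback maps in the diagram \eqref{diagramma}. Concretely, I would define
\[
\Phi\, :\, \Pic X \oplus \Pic \Tw(M)\, \longrightarrow\, \Pic \Tw(M)_X,\qquad
(L_1,\, L_2)\, \longmapsto\, \pi_X^* L_1 \otimes (\varphi^* L_2)^{-1},
\]
and prove that $\Phi$ is surjective with kernel equal to the image of $\Pic \mathbb{CP}^1$ under $L\,\longmapsto\,(f^*L,\, \pi^* L)$; the proposition then follows from the first isomorphism theorem. That this image is contained in $\ker \Phi$ is immediate from the commutativity of \eqref{diagramma}, since $\varphi^* \pi^* L \,=\, \pi_X^* f^* L$.

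For the reverse inclusion I would use Proposition \ref{thm:picard} to write any $L_2 \in \Pic \Tw(M)$ as $\pi^* B \otimes L_\gamma$, with $B \in \Pic \mathbb{CP}^1$ and $L_\gamma$ hyperholomorphic of class $c_1(L_\gamma) \,=\, \gamma \in H^2(M,\,\mathbb{Z})_{\mathrm{inv}}$. If $(L_1,\, L_2) \in \ker\Phi$, i.e.\ $\pi_X^* L_1 \cong \varphi^* L_2$, then I would compare first Chern classes under the topological product decomposition $\Tw(M)_X \cong M \times X$ (valid because $\pi$ is topologically trivial): via Künneth and $H^1(M,\,\mathbb{Z}) = 0$ one has $H^2(\Tw(M)_X,\,\mathbb{Z}) = H^2(M,\,\mathbb{Z}) \oplus H^2(X,\,\mathbb{Z})$, and under $\varphi^*$ the $H^2(M)$-summand is preserved while $\pi^*\Pic\mathbb{CP}^1$ lands in the $H^2(X)$-summand. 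Hence $c_1(\pi_X^* L_1)$ has trivial $H^2(M)$-component whereas that of $c_1(\varphi^* L_2)$ is $\gamma$, forcing $\gamma = 0$. Since hyperholomorphic line bundles are determined by their first Chern class, $L_\gamma$ is trivial and $L_2 = \pi^* B$; then $\pi_X^* L_1 = \varphi^* \pi^* B = \pi_X^* f^* B$, and injectivity of $\pi_X^*$ (valid because $\pi_X$ admits a holomorphic section, e.g.\ $x \mapsto (x,\, s(f(x)))$ for a twistor line $s$) gives $L_1 = f^* B$, as required.

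The heart of the argument is the surjectivity of $\Phi$, which I would establish by reducing modulo the image of $\Phi$ to the identity component. Given $N \in \Pic \Tw(M)_X$, restriction of $N$ to a fibre $\pi_X^{-1}(x) = M_{f(x)}$ shows that the $H^2(M)$-component $\alpha$ of $c_1(N)$ lies in $H^{1,1}(M_I)$ for every $I = f(x)$; since $f$ is surjective this holds for all $I \in \mathbb{CP}^1$, so $\alpha$ is $\mathrm{SU}(2)$-invariant by Lemma \ref{thm:invariant}. Thus $\alpha$ is realized by a hyperholomorphic bundle $L_\alpha$ on $\Tw(M)$, and since $\Pic X \to H^2(X,\,\mathbb{Z})$ is surjective, after twisting $N$ by $\varphi^* L_\alpha$ and a suitable $\pi_X^* B_1$ I may assume $c_1(N) = 0$, i.e.\ $N \in \Pic^0 \Tw(M)_X$.

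It then remains to identify $\Pic^0 \Tw(M)_X$ with $\pi_X^*(\Pic^0 X)$, and this is the main obstacle — precisely because, unlike in Proposition \ref{thm:picard} where $\Pic \Tw(M)$ was discrete, here the base $X$ has positive genus and $\Pic \Tw(M)_X$ carries a nontrivial continuous part that must be controlled separately. For this I would run the Leray spectral sequence of $\pi_X$ for $\mathcal{O}_{\Tw(M)_X}$: since the fibres satisfy $H^1(M_I,\, \mathcal{O}_{M_I}) = 0$, Grauert's theorem gives $R^1 {\pi_X}_* \mathcal{O}_{\Tw(M)_X} = 0$, which together with ${\pi_X}_* \mathcal{O}_{\Tw(M)_X} = \mathcal{O}_X$ yields $H^1(\Tw(M)_X,\, \mathcal{O}) = H^1(X,\, \mathcal{O}_X)$ via $\pi_X^*$; comparing with the Künneth isomorphism $H^1(\Tw(M)_X,\, \mathbb{Z}) = H^1(X,\, \mathbb{Z})$ shows that $\pi_X^* : \Pic^0 X \to \Pic^0 \Tw(M)_X$ is an isomorphism. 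The residual class $N \in \Pic^0 \Tw(M)_X$ is therefore $\pi_X^* B_0$ for some $B_0 \in \Pic^0 X$, and collecting the twists exhibits $N = \Phi(B_0 \otimes B_1,\, L_\alpha^{-1})$ in the image of $\Phi$, completing the proof.
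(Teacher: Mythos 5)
Your proof is correct, and its skeleton is identical to the paper's: you define the same homomorphism $(L_1,\,L_2)\,\longmapsto\,\pi_X^*L_1\otimes(\varphi^*L_2)^{-1}$, identify its kernel with $\Pic\mathbb{CP}^1$, and prove surjectivity. The tactics differ in both halves, most substantially in the surjectivity step. For the kernel, the paper argues geometrically: $\pi_X^*L_1\cong\varphi^*L_2$ forces $L_2$ to restrict trivially to every fibre of $\pi$, whence $L_2=\pi^*L'$ by Proposition \ref{thm:picard}; you reach the same conclusion by splitting $c_1$ along the K\"unneth decomposition of $H^2(M\times X,\,\mathbb{Z})$ and killing the hyperholomorphic component $\gamma$. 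For surjectivity, after the common first move (twisting by a hyperholomorphic $\varphi^*\tilde L$ so that the restriction to each fibre of $\pi_X$ becomes topologically, hence holomorphically, trivial), the paper descends directly: Grauert's theorem makes ${\pi_X}_*L'$ a line bundle and the counit $\pi_X^*({\pi_X}_*L')\to L'$ is checked to be a fibrewise, hence global, isomorphism. You instead normalize $c_1(N)$ to zero by a further twist from $\Pic X$ and then identify $\Pic^0\Tw(M)_X$ with $\Pic^0 X$ by comparing the exponential sequences via the Leray isomorphism $H^1(\Tw(M)_X,\,\mathcal{O})\cong H^1(X,\,\mathcal{O}_X)$ and the K\"unneth isomorphism on $H^1(\cdot,\,\mathbb{Z})$. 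Both routes rest on the same vanishing $R^1{\pi_X}_*\mathcal{O}_{\Tw(M)_X}=0$; the paper's descent argument is slightly more economical in that it never needs to isolate the continuous part of the Picard group, while yours has the merit of making that continuous part (nontrivial exactly when $X$ has positive genus) completely explicit, which is a useful sanity check on why the quotient in the statement is the right answer for an arbitrary curve $X$ and not just for $X=\mathbb{CP}^1$.
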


\begin{proof}
There is a natural homomorphism
\begin{equation} \label{pic-iso}
\begin{array}{ccc}
\Pic X \oplus \Pic \Tw(M) & \longrightarrow & \Pic \Tw(M)_X \\
(L_1,\, L_2) & \longmapsto & \pi_X^*L_1 \otimes \varphi^*L_2^*
\end{array}
\end{equation}
We first show that the kernel is $\Pic \mathbb{CP}^1$. Suppose $(L_1,\, L_2) \,\in\,
\Pic X\bigoplus \Pic \Tw(M)$ is such that $\pi_X^*L_1 \,\cong\, \varphi^*L_2$ on $\Tw(M)_X$. This
means that the restriction of $\varphi^*L_2$ on $\Tw(M)_X$ to any fibre of the morphism $\pi_X$
is trivial, hence the same can be said about the restriction of $L_2$ on $\Tw(M)$ to
any fibre of $\pi$. By Proposition \ref{thm:picard}, the line bundle $L_2$ must be of the form
$\pi^* L'$ for some line bundle $L'$ on $\mathbb{CP}^1$. Then on $\Tw(M)_X$ we have
\[
\pi_X^*L_1 \cong \varphi^*L_2 \cong \varphi^*(\pi^* L') = \pi_X^*(f^* L').
\]
But, as noted previously, $\pi_X^* \,:\, \Pic X \,\longrightarrow\,
 \Pic \Tw(M)_X$ is injective, hence $L_1 \cong f^*L'$ on $X$. So $(L_1,\, L_2)
\,=\, (f^*L',\, \pi^*L')$ lies in the subgroup $\Pic \mathbb{CP}^1 \,\subseteq\, \Pic X \bigoplus \Pic \Tw(M)$.

We now show that the map in \eqref{pic-iso} is surjective. Let $L$ be an arbitrary holomorphic line bundle on 
$\Tw(M)_X$. Note that, for any point $P \,\in\, X$, the fibre $\pi_X^{-1}(P)$ is just the manifold $M_{f(P)}$, 
where $f(P) \in \mathbb{CP}^1$ is the corresponding induced complex structure on $M$. It follows that, when we take 
the restriction $L_P$ of $L$ to the fibre $\pi_X^{-1}(P)$, the first Chern class $c_1(L_P)\,=\, \eta\,\in\, {{}H}^2(M,\, 
\mathbb{Z})$ (which is the same for all $P$) must be an element of ${{}H}^2(M,\, \mathbb{Z})_{\mathrm{inv}}$. By 
Proposition \ref{thm:picard}, there exists a hyperholomorphic line bundle $\widetilde{L}$ on $\Tw(M)$ corresponding to 
$\eta\,\in\, {{}H}^2(M,\, \mathbb{Z})_{\mathrm{inv}}$, and taking its pullback to $\Tw(M)_X$, we have that 
$L\bigotimes \varphi^* \widetilde{L}^*$ restricts trivially to all fibres of $\pi_X$. It remains to show that the line 
bundle $L' \,:= \,L \otimes \varphi^* \widetilde{L}^*$ on $\Tw(M)_X$ comes from $X$. For any $P \,\in \,X$, we have
\[
{{}H}^0\left(\pi_X^{-1}(P), \,L'_P\right) \,=\, {{}H}^0\left(M_{f(P)},\, \mathcal{O}_{M_{f(P)}}\right) = \mathbb{C}.
\]
By Grauert's theorem (Theorem 10.5.5 in \cite{grauert-remmert}), it follows that ${\pi_X}_* L'$ is a line bundle
on $X$ and its fibre over $P \in X$ is isomorphic to the above.
Taking the pullback of ${\pi_X}_* L'$ back to $\Tw(M)_X$, we have a natural morphism of line bundles
\[
\pi_X^*({\pi_X}_* L') \longrightarrow L',
\]
and it is easy to see that it is an isomorphism over every fibre of $\pi_X$. Hence it is an isomorphism everywhere on $\Tw(M)_X$, and we are done.
\end{proof}

\section{Irreducible bundles on $\mathrm{Tw}(M)$ of prime rank} \label{sect:prime}

In this section, we shall extend Theorem \ref{thm:result} by showing that, for a compact simply connected 
hyperk\"ahler manifold $M$, any irreducible bundle of prime rank on the twistor space $\Tw(M)$ is generically 
fibrewise stable with respect to the twistor projection $\pi \,:\, \Tw(M) \,\longrightarrow\, \mathbb{CP}^1$. The 
strategy of proof consists of showing that any such bundle is generically fibrewise simple, thus reducing to the 
case already covered by Theorem \ref{thm:result}.

Let $E$ be a holomorphic vector bundle on the twistor space $\Tw(M)$. In this section we will be concerned with 
morphisms of the form $F : E \,\longrightarrow\, E(D)$, where $D$ is a divisor on $\Tw(M)$. For any divisor $D' 
\,\ge\, D$, the composition of $F$ with the natural monomorphism $E(D) \,\longrightarrow\, E(D')$ will be denoted 
by the same letter $F : E \,\longrightarrow\, E(D')$, and we will think of it as essentially the same morphism. 
Using this idea, given two morphisms $F_1 \,:\, E \,\longrightarrow\, E(D_1)$, $F_2 \,:\, E \,\longrightarrow\, 
E(D_2)$, we can think of their sum $F_1 + F_2$ as a morphism of the form $E \,\longrightarrow\, E(D')$, where $D'$ 
is any divisor containing both $D_1$ and $D_2$.

Recall from Proposition \ref{thm:divisors} that the twistor projection $\pi \,:\, \Tw(M)\,\longrightarrow\, 
\mathbb{CP}^1$ identifies divisors on $\Tw(M)$ with divisors on $\mathbb{CP}^1$, and thus in what follows we will 
denote the corresponding divisors by the same letter. In particular, the field of meromorphic functions on $\Tw(M)$ 
can be identified with $K(\mathbb{CP}^1)$, the function field of $\mathbb{CP}^1$. Thus, given an element $\eta 
\,\in\, K(\mathbb{CP}^1)$ with divisor of poles $D'$, we can think of it as a holomorphic section of the line bundle 
$\mathcal{O}_{\Tw(M)}(D')$ on $\Tw(M)$, and vice versa. In this manner, given a morphism $F\,:\, E \,\longrightarrow\, 
E(D)$ on $\Tw(M)$, we can think of the product $\eta \cdot F$ as a morphism $\eta \cdot F \,:\, E \,\longrightarrow\, 
E(D + D')$.

Now let $E$ be an irreducible bundle on $\Tw(M)$. Taking the pushforward of the endomorphism bundle $\End E\,=\, E^* 
\bigotimes E$ by the twistor projection, $\pi_*(\End E)$ is a vector bundle, being a torsionfree sheaf on 
$\mathbb{CP}^1$, and hence holomorphically decomposes as a sum of line bundles by the Birkhoff-Grothendieck
theorem. Note that $E$ is simple bundle on $\Tw(M)$, in the sense that
\[
\Hom_{\Tw(M)}(E,\, E) \,=\, \mathbb{C}\,.
\]
This is because the irreducibility of $E$ clearly implies that it is stable, and stable bundles are
simple (see Theorem 1.2.9 in Chapter 2 of \cite{okonek}). Hence we have
\[
{{}H}^0(\mathbb{CP}^1,\, \pi_*(\End E)) \,=\, {{}H}^0(\Tw(M),\, \End E)
\,=\, \Hom_{\Tw(M)}(E,\, E) \,=\, \mathbb{C}\, .
\]
It follows that in the direct sum decomposition of $\pi_*(\End E)$, there is exactly one summand of the form 
$\mathcal{O}_{\mathbb{CP}^1}$, while all other summands (if any) are negative line bundles. It's not hard to see 
that $\pi_*(\End E) \,=\, \mathcal{O}_{\mathbb{CP}^1}$ occurs precisely when $E$ is generically fibrewise simple, while 
if it is not, $\pi_*(\End E)$ also has negative summands.

As noted above, the only endomorphisms $E\,\longrightarrow\,
E$ of an irreducible $E$ on $\Tw(M)$ are homotheties, but if we look at morphisms $E\,\longrightarrow\,
E(D)$ for various divisors $D$, we get more possibilities. Using the projection formula,
\[
\Hom_{\Tw(M)}\big(E,\, E(D)\big)\,=\,{{}H}^0\big(\Tw(M),\, (\End E)(D)\big)
\,=\, {{}H}^0\big(\mathbb{CP}^1,\, \left[\pi_*(\End E)\right](D)\big)\, .
\]
With the description of $\pi_*(\End E)$ given in the previous paragraph, we see that if $E$ is generically 
fibrewise simple, the only morphisms $E \,\longrightarrow\, E(D)$ are multiplications of $\textrm{Id}_E$ by 
meromorphic functions from $K(\mathbb{CP}^1)$, while if it is not, we can always find a morphism $F \,:\, E 
\,\longrightarrow\, E(D)$ for some $D\,>\, 0$ coming from a negative summand of $\pi_*(\End E)$, which
will not be a multiplication by an element of $K(\mathbb{CP}^1)$.

\begin{definition}
For any morphism $F \,:\, E \,\longrightarrow\, E(D)$ on $\Tw(M)$, the \emph{trace} of $F$, 
denoted $\Tr F$, is a global section of the line bundle $\mathcal{O}_{\Tw(M)}(D)$ determined by the composition
\[
\mathcal{O}_{\Tw(M)} \longrightarrow \End E \longrightarrow \mathcal{O}_{\Tw(M)}(D),
\]
where the first map is induced by the identity morphism $\textrm{Id}_E \,:\, E \,\longrightarrow\,
E$, while the second map is induced by $F$. The \emph{characteristic polynomial} of $F$, denoted
$\charpoly_F$, is a polynomial over the field $K(\mathbb{CP}^1)$, which takes $t \,\in\, K(\mathbb{CP}^1)$ to
\[
\charpoly_F(t) \,= \,\det\left(t\cdot \textrm{Id}_E - F\right) \,=\, \sum_{i = 0}^r (-1)^i\Tr({{}\Lambda}^i F)
t^{r-i}\, ,
\]
where we view $\Tr({{}\Lambda}^i F) \,\in\, {{}H}^0\left(\Tw(M), \,\mathcal{O}_{\Tw(M)}(iD)\right)$
as an element of $K(\mathbb{CP}^1)$. A root $\eta \,\in\, K(\mathbb{CP}^1)$ of $\charpoly_F(t)$ (if it exists) is
called an \emph{eigenvalue} of $F$.
\end{definition}

Since $\charpoly_F$ is a polynomial with meromorphic functions as coefficients, evaluating these coefficients at 
any point $x \in \Tw(M) \setminus \Supp D$ gives a polynomial $\left.\charpoly_F\right|_x$ over the field 
$\mathbb{C}$. It's not hard to see that $\left.\charpoly_F\right|_x$ is simply the characteristic polynomial of the 
linear map $F_x \,:\, E_x \,\longrightarrow\, E(D)_x\,\cong\, E_x$. Similarly, if $\eta \,\in\, K(\mathbb{CP}^1)$ is an 
eigenvalue of $\charpoly_F(t)$ as in the above definition, then at any $x \,\in\, \Tw(M)$ outside $\Supp D$ where 
$\eta$ is defined, the evaluation $\eta(x)$ is an eigenvalue of $F_x$.

The main result of this section follows.

\begin{theorem}\label{thm:prime}
Let $M$ be a compact simply connected hyperk\"ahler manifold, and let $E$ be a holomorphic vector bundle on its 
twistor space $\Tw(M)$. If $E$ is irreducible and the rank of $E$ is prime, then $E$ is generically fibrewise 
stable.
\end{theorem}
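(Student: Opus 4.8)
The plan is to show that an irreducible $E$ of prime rank $r$ is generically fibrewise simple; the desired conclusion then follows from the third assertion of Theorem \ref{thm:result}. I argue by contradiction. If $E$ is not generically fibrewise simple, then, as explained in the discussion preceding the theorem, $\pi_*(\End E)$ has a negative direct summand and hence there is a morphism $F\,:\,E\,\longrightarrow\,E(D)$ which is not multiplication by any element of $K(\mathbb{CP}^1)$, i.e.\ not a homothety. I assemble all such morphisms into the $K(\mathbb{CP}^1)$-algebra
\[
A\,:=\,\varinjlim_{D'}\Hom_{\Tw(M)}\big(E,\,E(D')\big)\,=\,\pi_*(\End E)\otimes_{\mathcal{O}_{\mathbb{CP}^1}}K(\mathbb{CP}^1)\,,
\]
a finite-dimensional algebra over $K:=K(\mathbb{CP}^1)$ containing $F\notin K$. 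The first point is that $A$ is a division algebra: any nonzero $a\in A$ is a nonzero morphism $E\,\longrightarrow\,E(D')$, and irreducibility of $E$ forces $\Ker a=0$ (a nonzero kernel would be a proper subsheaf of rank between $1$ and $r-1$), so $a$ is injective of full generic rank, hence invertible in $A$ via $a^{-1}=\operatorname{adj}(a)/\det(a)$.

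Next I pin down the characteristic polynomial of $F$. Since $A$ is a division algebra, the commutative subalgebra $L:=K[F]$ is a field, so the minimal polynomial $m_F\in K[t]$ is irreducible and $L\cong K[t]/(m_F)$; set $d:=[L:K]=\deg m_F\ge 2$. To compute $d$ I use that the coefficients of $\charpoly_F$ are functions of the base point, so the eigenvalues of $F$ (the roots of $m_F$, distinct since $m_F$ is separable) are constant along each fibre $M_I$ and trace out the sheets of a spectral cover of $\mathbb{CP}^1$. Because $m_F$ is irreducible these roots are algebraically conjugate over $K$, so the corresponding generic eigenspaces of $F$ all have the same dimension $m$, whence $r=dm$. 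As $r$ is prime and $d\ge 2$, this forces $d=r$ and $m=1$; thus $\charpoly_F=m_F$ is irreducible of degree $r$ with simple roots.

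Now I pass to a branched cover to make a single eigenvalue appear. Let $f\,:\,X\,\longrightarrow\,\mathbb{CP}^1$ be the smooth projective curve with $K(X)=L$, a degree-$r$ cover, and form the fibred product $\Tw(M)_X$ of Section \ref{sect:twmx}, with its maps $\varphi$ and $\pi_X$ as in \eqref{diagramma}. On $\Tw(M)_X$ the tautological root $\theta\in L=K(X)$ of $m_F$ becomes a meromorphic function, and $\varphi^*F-\theta\cdot\mathrm{Id}$ has vanishing determinant $\charpoly_F(\theta)=0$; since $\theta$ is a simple root, $\mathcal{S}:=\Ker(\varphi^*F-\theta\cdot\mathrm{Id})\subset\varphi^*E$ has rank $1$. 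Saturating $\mathcal{S}$ gives a line bundle $\mathcal{L}\subset\varphi^*E$ on the smooth manifold $\Tw(M)_X$. By Proposition \ref{thm:picard-x} we may write $\mathcal{L}\cong\pi_X^*L_1\otimes\varphi^*L_2$ with $L_1\in\Pic X$, $L_2\in\Pic\Tw(M)$, and then flat base change ($\varphi_*\pi_X^*=\pi^*f_*$), the projection formula, and the Birkhoff--Grothendieck splitting $f_*L_1\cong\bigoplus_{i=1}^r\mathcal{O}_{\mathbb{CP}^1}(a_i)$ give
\[
\varphi_*\mathcal{L}\,\cong\,\pi^*(f_*L_1)\otimes L_2\,\cong\,\bigoplus_{i=1}^r L_2(a_i)\,.
\]

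Finally I descend to a contradiction. Composing the inclusion $\varphi_*\mathcal{L}\,\hooklongrightarrow\,\varphi_*\varphi^*E$ with the trace map $\varphi_*\varphi^*E\,\longrightarrow\,E$ yields a morphism $\Phi\,:\,\varphi_*\mathcal{L}\,\longrightarrow\,E$. Over a general $I\in\mathbb{CP}^1$ the cover is unramified, $\varphi^{-1}(M_I)$ is $r$ disjoint copies of $M_I$, and on each copy $\mathcal{L}$ restricts to one eigen-line-bundle of $F_I$ inside $E_I$; since over such a fibre the trace is summation over the sheets, $\Phi|_{M_I}$ is exactly the eigenspace decomposition isomorphism $\bigoplus_j\Ker(F_I-\lambda_j)\xrightarrow{\ \sim\ }E_I$. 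In particular $\Phi$ is nonzero on each summand, so restricting it to one summand produces a nonzero morphism $L_2(a_1)\,\longrightarrow\,E$ from a line bundle, whose image is a rank-one subsheaf of $E$. As $r\ge 2$, this contradicts the irreducibility of $E$, completing the argument. I expect the main obstacle to lie in the last two steps: arranging that the rank-one eigen-object on the auxiliary space $\Tw(M)_X$ descends to a genuine nonzero morphism into $E$ on $\Tw(M)$ (rather than a merely fibrewise isomorphism), for which the explicit description of $\Pic\Tw(M)_X$ and the behaviour of $\varphi$ over a general fibre are essential.
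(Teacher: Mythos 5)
Your proposal is correct and follows essentially the same route as the paper: reduce to showing $E$ is generically fibrewise simple, take a non-scalar $F\,:\,E\,\longrightarrow\,E(D)$, use irreducibility of $E$ and the primality of $\rk E$ to force $\charpoly_F$ to be irreducible of degree $r$, pass to the associated branched cover, extract the rank-one eigen-sheaf of $\varphi^*F$, and descend via Proposition \ref{thm:picard-x} to produce a rank-one subsheaf of (a twist of) $E$, contradicting irreducibility. Your packaging of the polynomial analysis as a division-algebra argument, your use of the degree-$r$ cover $K[t]/(m_F)$ instead of the full splitting field, and your use of the trace map in place of the paper's component-by-component analysis of the pushed-forward monomorphism are only cosmetic variations on the paper's argument.
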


\begin{proof}
Let $E$ be an irreducible bundle on $\Tw(M)$ and suppose $\rk E$ is a prime number. If $E$ is generically fibrewise 
simple, then an application of Theorem \ref{thm:result} gives that $E$ is generically fibrewise stable.

To prove by contradiction, we assume that $E$ is not generically fibrewise simple.

As discussed in the beginning of this section, this implies that 
there exists a divisor $D$ and a morphism $F \,:\, E \,\longrightarrow\, E(D)$, which is not a multiplication by a 
meromorphic function. Consider the characteristic polynomial of the morphism $F$,
\[
\charpoly_F(t) \,=\, t^r + c_1t^{r-1} + \ldots + c_{r-1} t + c_r\, .
\]
We can write
\begin{equation} \label{char-decompo}
\charpoly_F(t) \,=\, p_1(t)^{n_1} \cdots \ p_s(t)^{n_s}\, ,
\end{equation}
where $p_1(t),\, \cdots,\, p_s(t)$ are distinct irreducible polynomials over the function field
$K(\mathbb{CP}^1)$, and $n_1,\, \cdots,\, n_s$ are positive integers. Plugging $F$ into $\charpoly_F(t)$, we get that
\[
\charpoly_F(F) \,=\, F^r + c_1F^{r-1} + \ldots + c_{r-1} F + c_r\, .
\]
Here, the powers $F^i$ are morphisms $E \,\longrightarrow\, E(iD)$; for example, $F^2$ is the composition
\[
E \,\stackrel{F}{\longrightarrow}\, E(D) \,\stackrel{F(D)}{\longrightarrow}\, E(2D)\, ,
\]
and similarly for higher powers. Recalling from the definition of $\charpoly_F$ that the coefficients $c_{r-i}$ are 
global sections of $\mathcal{O}_{\Tw(M)}\left([r-i]D\right)$, we see that $\charpoly_F(F)$ is a well-defined 
morphism $E\,\longrightarrow\, E(rD)$. Over any point $x \in \Tw(M) \setminus \Supp D$,
\[
\left.\charpoly_F(F)\right|_x \,=\, \charpoly_{F_x}(F_x)\, ,
\]
which is zero by the Cayley-Hamilton theorem, and since $E(rD)$ has no torsion, we conclude that $\charpoly_F(F) \,:\, E \,\longrightarrow\,
 E(rD)$ is zero globally as well.

Recalling the decomposition \eqref{char-decompo}, we can write the morphism $\charpoly_F(F) \,:\,
E \,\longrightarrow\,
 E(rD)$ as the composition
\[
E \xrightarrow{p_1(F)^{n_1}} E(D_1) \xrightarrow{p_2(F)^{n_2}} E(D_2) \longrightarrow \ldots \longrightarrow E(D_{s-1}) \xrightarrow{p_2(F)^{n_s}} E(D_s),
\]
where $D_1,\, \cdots,\, D_s$ are some divisors. As noted above, this composition is zero. But since $E$ is irreducible, the only possible morphisms from $E$ to any torsionfree sheaf on $\Tw(M)$ are monomorphisms and the zero morphism, and the same can be said about the vector bundles $E(D_1), \, \cdots,\, E(D_s)$. So one of the morphisms in the above composition must be zero. Rearranging the polynomials $p_i$ if necessary, we can assume that $p_1(F)^{n_1} = 0$. Writing $p_1(F)^{n_1}$ as the composition of the morphism $p_1(F)$ with itself $n_1$ times, and repeating the exact same argument, we conclude that $p_1(F) = 0$.

We now claim that if $p_1(F) \,=\, 0$ then $p_1(t)$ is the only irreducible polynomial in the decomposition 
\eqref{char-decompo}, in other words, $s \,=\, 1$.

To prove the above claim, let $N \,\supseteq\, K(\mathbb{CP}^1)$ be a splitting field of 
$\charpoly_F(t)$. Note that
\begin{enumerate}
\item[(i)] each $p_i(t)$ splits into distinct linear factors over $N$, since it's separable as we're working in 
characteristic 0;

\item[(ii)] since $p_1(t),\, \cdots,\, p_s(t)$ are all distinct and irreducible, no two of them have a common 
linear factor over $N$.
\end{enumerate}
What this means geometrically is that for any point $x \,\in\, \Tw(M) \setminus \Supp D$ outside a divisor, each 
restriction $\left.p_1\right|_x(t),\, \cdots, \,\left.p_s\right|_x(t)$ has no repeated roots as a polynomial over 
$\mathbb{C}$, and no two of them have a common root. Since each root of each $\left.p_i\right|_x(t)$ is an 
eigenvalue of $$F_x \,:\, E_x \,\longrightarrow\, E(D)_x \,\cong\, E_x\, ,$$ the fact that
\[
\left.p_1\right|_x(F_x) \,=\, \left.p_1(F)\right|_x \,=\, 0
\]
implies that $F_x$ has no eigenvalues other than the roots of $\left.p_1\right|_x(t)$.
This means that there can be no $p_i$ other than $p_1$. This completes the proof of the claim.

Relabeling, the decomposition in \eqref{char-decompo} can be written as
\begin{equation}\label{char-decompo-new}
\charpoly_F(t) \,=\, p(t)^n\, ,
\end{equation}
where $p(t)$ is an irreducible polynomial over the field $K(\mathbb{CP}^1)$, and $n$ is a positive integer.

We now use the fact that the rank of $E$ is prime. Note that $\deg \charpoly_F(t)\, = \,\rk E$, so we have
\[
\rk E\, =\, n \cdot \deg p(t)\, .
\]
There are two cases, which we consider separately.

\subsection*{Case $n = \rk E$} In other words, $p(t)$ is a linear polynomial in \eqref{char-decompo-new} of the form
\[
p(t) \,=\, t - \eta\, ,
\]
where $\eta \in K(\mathbb{CP}^1)$ is some meromorphic function. But we know that $p(F)\, =\, 0$, so in this case
$F\, =\,\eta$. This contradicts the fact that $F$ was chosen not to be a multiplication by a meromorphic function.

\subsection*{Case $n = 1$} In other words, $\charpoly_F(t)\,=\, p(t)$ is an irreducible polynomial
in \eqref{char-decompo-new}. We pass to the splitting field $N \,\supseteq\, K(\mathbb{CP}^1)$ of
$\charpoly_F(t)$. As noted previously, over $N$ the polynomial $\charpoly_F(t)$ splits into distinct linear factors:
\begin{equation} \label{on-twmx}
\charpoly_F(t) \,=\, (t - \eta_1)(t - \eta_2)\ldots(t - \eta_r), \ \ \eta_i \in N\, .
\end{equation}
Let $f \,:\, X \,\longrightarrow\,
 \mathbb{CP}^1$ be the unique branched cover corresponding to the field extension $K(\mathbb{CP}^1) \subseteq N$, where $X$ is a smooth curve. Consider the fibred product
\begin{equation} \label{diagramma2}
\xymatrix{\Tw(M)_X \ar[r]^-{\varphi} \ar[d]_{\pi_X} & \Tw(M) \ar[d]^\pi \\ X \ar[r]_f & \mathbb{CP}^1}
\end{equation}
Pulling back $E$ and the morphism $F \,: \,E \,\longrightarrow\,
 E(D)$ by $\varphi$, we get a morphism $\varphi^*F \,:\, \varphi^*E\,\longrightarrow\,
 \varphi^*E\left(\varphi^*D\right)$ on $\Tw(M)_X$. It's not hard to see that the characteristic polynomial of $\varphi^* F$ is given by \eqref{on-twmx}, where this time we think of $\eta_1,\, \cdots,\, \eta_r$ as meromorphic functions on $\Tw(M)_X$ coming from $X$. Look at the morphism
\[
\varphi^* E \xrightarrow{\ \varphi^*F - \eta_1 \ } \varphi^* E(\widetilde{D})
\]
where $\widetilde{D}$ denotes a divisor on $\Tw(M)_X$ containing $\varphi^*D$ and the poles of $\eta_1$. Since 
$\eta_1, \, \cdots,\, \eta_r$ are all distinct, they generically have distinct values on $\Tw(M)_X$, and so for all 
$y$ in $\Tw(M)_X$ outside a divisor, the eigenspace of the linear map
\[
\left(\varphi^*F\right)_y \,: \,\left(\varphi^* E\right)_y \,\longrightarrow\,
 \left(\varphi^* E(\widetilde{D})\right)_y \,\cong\, \left(\varphi^* E\right)_y
\]
corresponding to the eigenvalue $\eta_1(y)$ has dimension one. It follows from this that the kernel of the
above morphism $\varphi^*F - \eta_1$ is a sheaf of rank one. It is clearly torsionfree and since its cokernel
is also torsionfree, it is normal (by Lemma 1.1.16 of Chapter 2 in \cite{okonek}), and hence it is a line
bundle (see Lemma 1.1.12 and Lemma 1.1.15 of Chapter 2 in \cite{okonek}). In short, we have a line subsheaf
\[
L \,\longhookrightarrow\, \varphi^*E
\]
on $\Tw(M)_X$.

We now use the results obtained in Section \ref{sect:twmx}. By Proposition \ref{thm:picard-x}, we can write
\[
L \,\cong\, \varphi^* L' \otimes \pi_X^* L''\, ,
\]
where $L'$ is some line bundle on $\Tw(M)$ and $L''$ is some line bundle on $X$. Taking the pushforward of the 
sheaf monomorphism $L\,\cong\, \varphi^* L' \bigotimes \pi_X^* L'' \,\longhookrightarrow\, \varphi^*E$ by $\varphi$, we 
have a sheaf monomorphism
\[
\varphi_*\left(\varphi^* L' \otimes \pi_X^* L''\right)\,\longhookrightarrow\, \varphi_*\left(\varphi^*E\right)
\]
on $\Tw(M)$. Applying the projection formula to the two sides, this morphism can be expressed in the following form:
\begin{equation} \label{pushforward}
L' \otimes \varphi_*\left(\pi_X^* L''\right)\,\longhookrightarrow\,
E \otimes \varphi_*\left(\mathcal{O}_{\Tw(M)_X}\right)\, .
\end{equation}
In the diagram \eqref{diagramma2} we have an isomorphism
\[
\pi^*(f_*(\mathcal{F})) \,\cong\, \varphi_*(\pi_X^*(\mathcal{F}))
\]
for any torsionfree sheaf on $X$ (see Theorem III.3.4 and its corollaries in \cite{banica}). Taking $\mathcal{F}$ 
to be $L''$ and $\mathcal{O}_X$, and using the Birkhoff-Grothendieck theorem, we can write
\[
\varphi_*(\pi_X^*L'') \,\cong\, \pi^*(f_*(L'')) \,\cong\,
\pi^*\left(\bigoplus_{l=1}^d \mathcal{O}_{\mathbb{CP}^1}(A_l)\right) \,=\,
\bigoplus_{l=1}^d \mathcal{O}_{\Tw(M)}(A_l)\, ,
\]
\[
\varphi_*(\mathcal{O}_{\Tw(M)_X}) \,= \,\varphi_*(\pi_X^*(\mathcal{O}_X))
\,\cong \,\pi^*(f_*(\mathcal{O}_X)) \,\cong\, \pi^*\left(\bigoplus_{l=1}^d \mathcal{O}_{\mathbb{CP}^1}(B_l)\right)
\,=\, \bigoplus_{l=1}^d \mathcal{O}_{\Tw(M)}(B_l)\, ,
\]
where $A_1, \,\cdots,\, A_d,\, B_1,\, \cdots,\, B_d$ are some divisors on $\mathbb{CP}^1$. In view of this, we can 
write the morphism \eqref{pushforward} as
\[
L'(A_1) \oplus \cdots \oplus L'(A_d) \,\longhookrightarrow\, E(B_1) \oplus \cdots\oplus E(B_d)\, ,
\]
which we can think of as a $d \times d$ matrix of morphisms. Since this is a monomorphism, there must be some $1 
\,\le\, j \,\le \,d$, $1 \,\le\, k \,\le\, d$, such that the $(j,\, k)$-th constituent morphism
\[
L'(A_k) \,\longrightarrow\, E(B_j)
\]
is nonzero, which contradicts the irreducibility of $E$.

In this way, both possible cases lead to contradictions. This means that we could not have chosen the morphism $F 
\,:\, E \,\longrightarrow\, E(D)$ in the first place to be anything other than a multiplication by a meromorphic 
function. Hence $E$ must be generically fibrewise simple, and an application of Theorem \ref{thm:result} completes 
the argument.
\end{proof}

\section{The moduli space of fibrewise stable bundles} \label{sect:moduli}

The moduli space of fibrewise stable bundles on the twistor space $\Tw(M)$ of a hyperk\"ahler manifold $M$ can be 
interpreted in terms of rational curves in the twistor space of a certain dual variety $\widehat{M}$. This 
identification of moduli spaces is due to Kaledin and Verbitsky \cite{kaled-verbit}. We present it here, and 
slightly extend it with a technical result which will be used in the next section. Given any complex analytic space 
$X$ with a morphism $X \,\longrightarrow\, \mathbb{CP}^1$, the fibred product of $X \,\longrightarrow\, 
\mathbb{CP}^1$ with the twistor projection $\pi \,:\, \Tw(M) \,\longrightarrow\, \mathbb{CP}^1$ will again be 
denoted by $\Tw(M)_X$, as in the diagram
\[
\xymatrix{\Tw(M)_X \ar[r]^-\varphi \ar[d]_{\pi_X} & \Tw(M) \ar[d]^\pi \\ 
X \ar[r]^f & \mathbb{CP}^1}
\]
By analogy with $\Tw(M)$, we will call a holomorphic bundle on $\Tw(M)_X$ fibrewise stable if all its restrictions 
to the fibres of $\pi_X \,:\, \Tw(M)_X \,\longrightarrow\, X$ are stable. For the rest of this section, we fix a 
topological complex vector bundle $B$ on $M$ whose first two Chern classes $c_1(B)$, $c_2(B)$ are ${\rm 
SU}(2)$-invariant. Recalling that the twistor space $\Tw(M)$ comes equipped with a nonholomorphic projection 
$\sigma\, :\, \Tw(M) \,\longrightarrow\, M$, we can take the pullback bundle $\sigma^*(B)$ on $\Tw(M)$. Any 
holomorphic vector bundle on $\Tw(M)$ that we consider in this section will be assumed to have underlying 
topological structure $\sigma^*(B)$. Similarly, holomorphic bundles on $\Tw(M)_X$ will be assumed to have 
underlying topological structure $\varphi^*\left(\sigma^*(B)\right)$.

Let $I\,\in\, \mathbb{CP}^1$ be any induced complex structure on $M$, and let $\widehat{M}$ denote the moduli space 
of stable holomorphic bundles on $M_I$ with underlying topological structure $B$. Although $\widehat{M}$ need not 
be smooth or reduced, the complex analytic space structure on $\widehat{M}$ induces an almost complex structure on 
the real Zariski tangent spaces of points of $\widehat{M}$, which we denote by $\widehat{I}$.

Now let $J\,\in\,\mathbb{CP}^1$ be any other induced complex structure on $M$. By Theorem \ref{thm:c1c2}, any
stable bundle on $M_I$ with underlying topological structure $B$ is induced by a unique hyperholomorphic
connection on $B$. In turn, such a connection uniquely induces a holomorphic bundle on $M_J$, which
is stable since hyperholomorphic connections are Yang-Mills (Theorem 2.3 in \cite{verbit1}). It follows from
this that the underlying set of the moduli space of stable bundles on $M_J$ can be identified
with $\widehat{M}$, and we denote the corresponding almost complex structure on $\widehat{M}$
by $\widehat{J}$. In \cite{verbit1}, it is shown that if $(I,\, J,\, K\, =\, IJ)$ is a quaternionic triple on $M$,
\[
\left(\widehat{I},\, \widehat{J},\, \widehat{K} \,= \,\widehat{IJ} \,= \,\widehat{I}\widehat{J}\right)
\]
will be a quaternionic triple on $\widehat{M}$, and moreover there exists a metric on the real Zariski tangent 
space of $\widehat{M}$, compatible with this quaternionic structure, which gives $\widehat{M}$ the structure of a 
\emph{singular hyperk\"ahler variety}; see \cite{verbit1} for the precise definition and proof. Following 
\cite{kaled-verbit}, we will call $\widehat{M}$ the \emph{Mukai dual} of $M$. In case $M$ is a hyperk\"ahler 
surface, it follows from the work of Mukai \cite{mukai} that $\widehat{M}$ is actually smooth.

Although in general the Mukai dual $\widehat{M}$ is singular (and noncompact), one can still construct its twistor 
space, in the same way that we did for $M$ in Section \ref{sect:preliminaries}. The twistor space 
$\Tw(\widehat{M})$ is a complex analytic space parametrizing the induced complex structures at points of 
$\widehat{M}$; it is singular if $\widehat{M}$ is. To ease notation, in the rest of this section we will denote the 
Mukai dual twistor space $\Tw(\widehat{M})$ by $\widehat{Z}$. Just like the usual twistor space, $\widehat{Z}$ 
comes equipped with a holomorphic twistor projection
\[
\widehat{\pi} \,:\, \widehat{Z} \,\longrightarrow \,\mathbb{CP}^1,
\]
whose holomorphic sections will be called twistor lines. The set $\Sec(\widehat{\pi})$ of such twistor lines has the structure of a complex analytic space as a subset of the Douady space of rational curves in $\widehat{Z}$.

Now let $\mathcal{M}^s_{\mathrm{fib}}$ denote the set of fibrewise stable bundles on the original twistor space $\Tw(M)$. Since any such bundle is irreducible by Theorem \ref{thm:result}, it is in particular stable, so we have a set-theoretic inclusion
\[
\mathcal{M}^s_{\mathrm{fib}}\,\subset\, \mathcal{M}^s\, ,
\]
where $\mathcal{M}^s$ denotes the moduli space of stable holomorphic bundles on $\Tw(M)$. In fact, since stability 
is an open property, we know that $\mathcal{M}^s_{\mathrm{fib}}$ is an open subset of $\mathcal{M}^s$, and thus it 
inherits from $\mathcal{M}^s$ the structure of a complex analytic space. Let $E$ be any element of 
$\mathcal{M}^s_{\mathrm{fib}}$. From the discussion above, for any $I \in \mathbb{CP}^1$, the moduli space of 
stable bundles on $M_I$ has underlying set $\widehat{M}$. In this way, $E$ defines a (set-theoretic) map
\[
\begin{array}{ccc}
\mathbb{CP}^1 & \longrightarrow & \widehat{M} \\
I & \longmapsto & E_I,
\end{array}
\]
where $E_I$ is the restriction of $E$ to the fibre $\pi^{-1}(I)\,=\,M_I$, and this map in turn defines a (set-theoretic)
section of the Mukai dual twistor projection $\widehat{\pi}\,:\, \widehat{Z}\,\longrightarrow\,
\mathbb{CP}^1$. For example, it's not hard to see that if $E$ is hyperholomorphic, the resulting
section is just a horizontal twistor line. In general, the section of $\widehat{\pi}$ induced by
$E$ will be holomorphic, and will thus be a twistor line. Furthermore, any twistor line can be obtained in this way from
a unique $E$, as the next result shows.

\begin{theorem}\label{thm:fib-sec}
Any fibrewise stable bundle on $\Tw(M)$ induces in a natural way a twistor line in the Mukai dual twistor space $\widehat{Z} = \Tw(\widehat{M})$, and the resulting map
\[
\mathcal{M}^s_{\mathrm{fib}} \,\stackrel{\cong}{\longrightarrow} \,\Sec(\widehat{\pi})
\]
is an isomorphism of complex analytic spaces. Moreover, there exists an open cover $\left\{U_\alpha\right\}$ of $\widehat{Z}$, together with holomorphic vector bundles $\mathcal{E}_\alpha$ on the corresponding open neighborhoods $\pi_{\widehat{Z}}^{-1}(U_\alpha) \subseteq \Tw(M)_{\widehat{Z}}$, with the following property: for any complex analytic space $X$, morphism $X \,\longrightarrow\,
\mathbb{CP}^1$ and fibrewise stable bundle $\mathcal{F}$ on $\Tw(M)_X$, there exists a unique map $g \,:\, X \,\longrightarrow\,
\widehat{Z}$ over $\mathbb{CP}^1$, such that, for every index $\alpha$,
\[
\mathcal{F} \,\cong\, \psi^* \mathcal{E}_\alpha\ \textrm{ over }\ \pi_X^{-1}(g^{-1}(U_\alpha))\, ,
\]
where the map $\psi \,:\, \Tw(M)_X \,\longrightarrow\, \Tw(M)_{\widehat{Z}}$ is induced by $g$, as in the diagram
\begin{equation} \label{fib-sec-univ}
\xymatrix{\Tw(M)_X \ar@{-->}[r]^-\psi \ar[d]_{\pi_X} & \Tw(M)_{\widehat{Z}} \ar[r] \ar[d]^{\pi_{\widehat{Z}}} & \Tw(M) \ar[d]^\pi \\ X \ar@{-->}[r]^-g & \widehat{Z} \ar[r]^{\widehat{\pi}} & \mathbb{CP}^1}
\end{equation}
\end{theorem}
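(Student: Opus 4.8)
The plan is to realize $\widehat{Z}$, equipped with the promised local bundles $\{\mathcal{E}_\alpha\}$, as a relative moduli space of fibrewise stable bundles over $\mathbb{CP}^1$ that is fine up to the open cover, and then to read off both the isomorphism $\mathcal{M}^s_{\mathrm{fib}} \cong \Sec(\widehat{\pi})$ and the universal property from this single structure. First I would recall, following Kaledin and Verbitsky, that the fibre $\widehat{\pi}^{-1}(I)$ is exactly the moduli space of stable bundles on $M_I$ with topological type $B$, so that a fibrewise stable $E$ restricts on each fibre $M_I = \pi^{-1}(I)$ to a stable, hence simple, bundle $E_I$ whose isomorphism class is a point of $\widehat{\pi}^{-1}(I)$; letting $I$ vary yields the desired set-theoretic section of $\widehat{\pi}$. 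The substance of the theorem lies in upgrading this pointwise recipe to a statement about families, and for that I would construct the local universal bundles: the standard deformation-theoretic (Kuranishi) description of the analytic moduli space supplies, on a suitable open cover $\{U_\alpha\}$ of $\widehat{Z}$, holomorphic bundles $\mathcal{E}_\alpha$ on $\pi_{\widehat{Z}}^{-1}(U_\alpha) \subseteq \Tw(M)_{\widehat{Z}}$ whose restriction to the fibre over each point of $U_\alpha$ is the stable bundle parametrized by that point.

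Next I would prove the universal property. Given a fibrewise stable $\mathcal{F}$ on $\Tw(M)_X$, define $g \,:\, X \,\longrightarrow\, \widehat{Z}$ pointwise by sending $x$, lying over $I = f(x)$, to the class of $\mathcal{F}\vert_{\pi_X^{-1}(x)}$ in $\widehat{\pi}^{-1}(I)$. That $g$ is a morphism of analytic spaces over $\mathbb{CP}^1$ is exactly the classifying property of the local Kuranishi families $\mathcal{E}_\alpha$. Over the locus $\pi_X^{-1}(g^{-1}(U_\alpha))$ the bundles $\mathcal{F}$ and $\psi^*\mathcal{E}_\alpha$ restrict to isomorphic stable, hence simple, bundles on every fibre of $\pi_X$; Grauert's theorem then shows that the relative sheaf $(\pi_X)_* \mathcal{H}om(\mathcal{F},\, \psi^*\mathcal{E}_\alpha)$ is a line bundle on $g^{-1}(U_\alpha)$, and trivializing it, after absorbing a line bundle pulled back from the base into the normalization of $\mathcal{E}_\alpha$, produces the required genuine isomorphism $\mathcal{F} \cong \psi^*\mathcal{E}_\alpha$. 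Uniqueness of $g$ follows because distinct stable bundles on a fibre determine distinct points of $\widehat{Z}$.

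To obtain the isomorphism of analytic spaces I would apply this universal property in two ways. Taking $X = \mathbb{CP}^1$ with $f = \mathrm{id}$ and $\mathcal{F} = E$, a fibrewise stable bundle on $\Tw(M) = \Tw(M)_{\mathbb{CP}^1}$ produces a unique section $g = s_E$ of $\widehat{\pi}$, that is, a twistor line; this is the map $\mathcal{M}^s_{\mathrm{fib}} \,\longrightarrow\, \Sec(\widehat{\pi})$. Conversely, a twistor line $s$ induces a map $\Tw(M) \,\longrightarrow\, \Tw(M)_{\widehat{Z}}$, along which the pulled-back local bundles $s^*\mathcal{E}_\alpha$ agree on overlaps up to line bundles pulled back from $\mathbb{CP}^1$; using the description of $\Pic \Tw(M)$ in Proposition~\ref{thm:picard} to normalize these discrepancies, I would glue them into a single fibrewise stable bundle $E_s$ on $\Tw(M)$, giving the inverse map. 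That the two constructions are mutually inverse, and that both are morphisms of analytic spaces, follows by feeding the tautological families on $X = \mathcal{M}^s_{\mathrm{fib}}$ and on $X = \Sec(\widehat{\pi})$ (the latter sitting inside the Douady space) into the universal property and invoking its uniqueness clause.

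I expect the main obstacle to be the absence of a global universal bundle on $\widehat{Z}$: the moduli space carries only a twisted, gerbe-like universal family, which is precisely why the theorem is phrased with an open cover $\{U_\alpha\}$ and local bundles $\mathcal{E}_\alpha$ rather than a single Poincar\'e bundle. Controlling the line-bundle discrepancies between the $\mathcal{E}_\alpha$ on overlaps, and ensuring that the comparison isomorphisms $\mathcal{F} \cong \psi^*\mathcal{E}_\alpha$ are genuine rather than merely fibrewise, is the technical heart of the argument, and is exactly where the computation of $\Pic \Tw(M)_X$ in Proposition~\ref{thm:picard-x} is needed. A secondary difficulty is that $\widehat{M}$, and hence $\widehat{Z}$, may be singular and non-reduced, so that ``universal bundle'' and ``classifying map'' must be handled through the local analytic deformation theory and Grauert's theorem for the relative $\mathcal{H}om$, rather than through a smooth quotient construction.
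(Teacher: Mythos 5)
The paper gives no proof of this theorem beyond the citation to Section~7 of Kaledin--Verbitsky, and your proposal is a faithful reconstruction of the argument given there: the identification of the fibres of $\widehat{\pi}$ with moduli of stable bundles on the $M_I$, local universal (Kuranishi) families $\mathcal{E}_\alpha$, Grauert's theorem together with fibrewise simplicity to compare $\mathcal{F}$ with $\psi^*\mathcal{E}_\alpha$, and the two-way application of the universal property to obtain $\mathcal{M}^s_{\mathrm{fib}}\cong\Sec(\widehat{\pi})$. One caveat: since the cover $\{U_\alpha\}$ is fixed before $X$ is given, your Grauert argument really yields $\mathcal{F}\cong\psi^*\mathcal{E}_\alpha\otimes\pi_X^*L$ for a line bundle $L$ on $g^{-1}(U_\alpha)$ that cannot be absorbed into $\mathcal{E}_\alpha$ once and for all, so the comparison isomorphism should be read as holding only locally on $X$; this is exactly the twisted-sheaf ambiguity you flag, and it is all that is used downstream in Lemma~\ref{thm:multi-fib}, where ${{}H}^2(X,\mathcal{O}_X^*)=0$ kills the resulting obstruction.
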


\begin{proof}
See Section 7 of \cite{kaled-verbit}.
\end{proof}

In other words, $\widehat{Z}$ is a coarse moduli space of fibrewise stable bundles on $\Tw(M)$ which locally admits 
a universal family $\mathcal{E}_\alpha$, but these local universal families $\left\{\mathcal{E}_\alpha\right\}$ 
need not come from a single global universal family.

Now look at the special case that $f \,:\, X \,\longrightarrow\,
\mathbb{CP}^1$ is a branched cover of $\mathbb{CP}^1$ by a smooth curve $X$. By Theorem \ref{thm:fib-sec} above, a fibrewise stable bundle on the fibred product $\Tw(M)_X$ gives rise to a \emph{multisection} of the Mukai dual twistor projection $\widehat{\pi} \,:\, \widehat{Z}\,\longrightarrow\,
\mathbb{CP}^1$ over $X$, i.e., a morphism
\[
\xymatrix{ & \widehat{Z} \ar[d]^{\widehat{\pi}} \\ X \ar@{-->}[ur] \ar[r]_f & \mathbb{CP}^1 }
\]
We thus get a map
\begin{equation} \label{multi-fib-sec}
{\mathcal{M}^s_{\mathrm{fib}}}_X \longrightarrow \Sec_X(\widehat{\pi}),
\end{equation}
where ${\mathcal{M}^s_{\mathrm{fib}}}_X$ denotes the moduli space of fibrewise stable bundles on $\Tw(M)_X$, and $\Sec_X(\widehat{\pi})$ the space of multisections of $\widehat{\pi}$ over $X$. In contrast to the result of Theorem \ref{thm:fib-sec}, this map need not be injective. However, it is surjective, as the following lemma shows.

\begin{lemma}\label{thm:multi-fib}
Let $g \,:\, X \,\longrightarrow\,
\widehat{Z}$ be a multisection of the Mukai dual twistor projection $\widehat{\pi} \,:\,
\widehat{Z} \,\longrightarrow\,
\mathbb{CP}^1$. There exists a fibrewise stable bundle on $\Tw(M)_X$ which gets mapped to $g$ via the map \eqref{multi-fib-sec}.
\end{lemma}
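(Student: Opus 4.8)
The plan is to build the required bundle on $\Tw(M)_X$ by pulling back the local universal families $\mathcal{E}_\alpha$ provided by Theorem \ref{thm:fib-sec} along the map $\psi$ induced by $g$, and then gluing them together; the only obstruction to gluing will be a cohomology class on $X$ that vanishes precisely because $X$ is a curve. Concretely, the cover $\{U_\alpha\}$ of $\widehat{Z}$ pulls back to the cover $\{V_\alpha := \pi_X^{-1}(g^{-1}(U_\alpha))\}$ of $\Tw(M)_X$. Since $\widehat{\pi}\circ g = f$, the map $\psi \,:\, \Tw(M)_X \to \Tw(M)_{\widehat{Z}}$ carries $V_\alpha$ into $\pi_{\widehat{Z}}^{-1}(U_\alpha)$, so $\mathcal{F}_\alpha := \psi^*\mathcal{E}_\alpha$ is a well-defined holomorphic bundle on $V_\alpha$. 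Over the fibre $\pi_X^{-1}(x) = M_{f(x)}$ of a point $x \in g^{-1}(U_\alpha)$, the map $\psi$ restricts to an isomorphism onto the fibre of $\pi_{\widehat{Z}}$ over $g(x)$, so $\mathcal{F}_\alpha$ restricts there to the stable bundle on $M_{f(x)}$ represented by the moduli point $g(x)\in\widehat{M}$. In particular each $\mathcal{F}_\alpha$ is fibrewise stable, and on each fibre the bundles $\mathcal{F}_\alpha$ and $\mathcal{F}_\beta$ coincide over $g^{-1}(U_\alpha\cap U_\beta)$.

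Next I would measure the failure of the $\mathcal{F}_\alpha$ to glue. Because fibrewise stable bundles are simple, $\dim\Hom_{M_{f(x)}}(\mathcal{F}_\alpha|_{\text{fibre}},\, \mathcal{F}_\beta|_{\text{fibre}}) = 1$ for every $x$ in the overlap, so by Grauert's theorem $M_{\alpha\beta} := {\pi_X}_*\,\mathcal{H}om(\mathcal{F}_\alpha,\,\mathcal{F}_\beta)$ is a line bundle on $g^{-1}(U_\alpha\cap U_\beta)\subseteq X$, whose pullback yields a canonical isomorphism $\mathcal{F}_\beta \cong \mathcal{F}_\alpha\otimes\pi_X^*M_{\alpha\beta}$. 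On triple overlaps these satisfy $M_{\alpha\gamma}\cong M_{\alpha\beta}\otimes M_{\beta\gamma}$, and the resulting collection is exactly the data of an $\mathcal{O}_X^*$-gerbe on $X$. Its class $[g]\in H^2(X,\,\mathcal{O}_X^*)$ is the obstruction to choosing line bundles $N_\alpha$ on $g^{-1}(U_\alpha)$ with $M_{\alpha\beta}\cong N_\alpha\otimes N_\beta^{-1}$ together with gluing isomorphisms of the twisted bundles $\mathcal{F}_\alpha\otimes\pi_X^*N_\alpha$ satisfying the cocycle condition; this is the usual obstruction coming from the absence of a single global universal family on the coarse moduli space $\widehat{Z}$, pulled back to $X$ via $g$.

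The crucial observation is that this obstruction vanishes on a curve. Since $X$ is a smooth projective curve, $H^2(X,\,\mathcal{O}_X)=0$ and $H^3(X,\,\mathbb{Z})=0$, so the exponential sequence forces $H^2(X,\,\mathcal{O}_X^*)=0$, whence $[g]=0$. Choosing the $N_\alpha$ and suitably normalized gluing isomorphisms accordingly, the twisted pieces $\mathcal{F}_\alpha\otimes\pi_X^*N_\alpha$ patch into a global holomorphic bundle $\mathcal{F}$ on $\Tw(M)_X$. Twisting by $\pi_X^*N_\alpha$ leaves the restriction to each fibre unchanged up to isomorphism, so $\mathcal{F}$ is fibrewise stable and restricts over each $x\in X$ to the stable bundle represented by $g(x)$. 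Finally I would invoke Theorem \ref{thm:fib-sec}: the map $X\to\widehat{Z}$ over $\mathbb{CP}^1$ classifying $\mathcal{F}$ is determined by the fibrewise isomorphism classes of $\mathcal{F}$, which agree with those prescribed by $g$; hence this classifying map is $g$ itself, and $\mathcal{F}$ is sent to $g$ under \eqref{multi-fib-sec}, as required.

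I expect the main obstacle to be the correct packaging of the overlap data $\{M_{\alpha\beta}\}$ as an $\mathcal{O}_X^*$-gerbe and the identification of its class with the pullback of the natural gerbe on $\widehat{Z}$; once this framework is in place, the vanishing $H^2(X,\,\mathcal{O}_X^*)=0$ for a curve does the essential work. A secondary technical point is verifying that the classifying map of the glued bundle $\mathcal{F}$ is insensitive to the line-bundle twists $N_\alpha$, which holds because these are pulled back from the base $X$ and therefore do not alter any fibrewise isomorphism class.
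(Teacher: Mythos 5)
Your proposal is correct and follows essentially the same route as the paper: pull back the local universal families $\mathcal{E}_\alpha$ of Theorem \ref{thm:fib-sec} along the map $\psi$ induced by $g$, observe (using simplicity of the stable fibrewise restrictions) that the obstruction to gluing is a class in ${{}H}^2(X,\,\mathcal{O}_X^*)$, and kill it via the exponential sequence because $X$ is a curve. The only cosmetic difference is that you package the overlap data as an $\mathcal{O}_X^*$-gerbe with possibly nontrivial twisting line bundles $M_{\alpha\beta}$, whereas the paper's universal property already gives genuine isomorphisms $h_{ij}$ on overlaps so that only the scalar \v{C}ech $2$-cocycle $\theta_{ijk}$ needs to be tracked.
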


\begin{proof}
The argument is the same as in the proof of Lemma 7.5 in \cite{kaled-verbit}. The map $g \,:\, X \,\longrightarrow\, \widehat{Z}$ induces a map $\psi \,:\, \Tw(M)_X
\,\longrightarrow\, \Tw(M)_{\widehat{Z}}$ of fibred products, as in the diagram \eqref{fib-sec-univ}. From Theorem \ref{thm:fib-sec}, we know that there are open sets $U_\alpha \subseteq \widehat{Z}$ covering $\widehat{Z}$, and universal bundles $\mathcal{E}_\alpha$ on the corresponding open neighborhoods $\pi_{\widehat{Z}}^{-1}\left(U_\alpha\right) \subseteq \Tw(M)_{\widehat{Z}}$. Passing from $\widehat{Z}$ to $X$, we have the open sets $g^{-1}(U_\alpha) \subseteq X$, and the pullbacks of the bundles $\mathcal{E}_\alpha$ by $\psi$ on the corresponding open neighborhoods $\pi_X^{-1}\left(g^{-1}(U_\alpha)\right) \subseteq \Tw(M)_X$. For simplicity, we will denote the preimage neighborhood $g^{-1}(U_\alpha)$ by $U_\alpha$ as well, and the pullback of the bundle $\mathcal{E}_\alpha$ again by $\mathcal{E}_\alpha$. If we can somehow glue these $\mathcal{E}_\alpha$ into a bundle on $\Tw(M)_X$, it's clear that it will be fibrewise stable, and that it will be mapped to $g$ via the map \eqref{multi-fib-sec}.

By compactness of $X$, we can choose a finite sub-cover $U_1, \,\cdots,\, U_n$ of $\left\{U_\alpha\right\}$. The corresponding bundles $\mathcal{E}_1, \,\cdots,\, \mathcal{E}_n$ are isomorphic on overlaps by their universal property. In other words, for any $1 \le i, j \le n$, we have isomorphisms
\[
h_{ij} : \left.\mathcal{E}_j\right|_{\pi_X^{-1}(U_i \cap U_j)} \stackrel{\cong}{\longrightarrow} \left.\mathcal{E}_i\right|_{\pi_X^{-1}(U_i \cap U_j)}.
\]
For any $i, j, k$, the composition
\[
h_{ij} \circ h_{jk} \circ h_{ki} \,:\,
\left.\mathcal{E}_i\right|_{\pi_X^{-1}(U_i \cap U_j \cap U_k)}
\,\stackrel{\cong}{\longrightarrow} \,\left.\mathcal{E}_i\right|_{\pi_X^{-1}(U_i \cap U_j \cap U_k)}
\]
need not equal the identity map. However, for any point $P \in U_i \cap U_j \cap U_k$, the restriction $\left.\mathcal{E}_i\right|_{\pi_X^{-1}(P)}$ is a stable bundle, hence in particular simple, i.e.,
\[
\Hom\left(\left.\mathcal{E}_i\right|_{\pi_X^{-1}(P)}, \left.\mathcal{E}_i\right|_{\pi_X^{-1}(P)}\right) = \mathbb{C}.
\]
It follows from this that we have
\[
h_{ij} \circ h_{jk} \circ h_{ki} \,=\, \theta_{ijk} \textrm{Id}_{\mathcal{E}_i}, \ 
\theta_{ijk} \,\in\, \mathcal{O}_X^*(U_i \cap U_j \cap U_k).
\]
The collection $\left\{\theta_{ijk}\right\}$ is a \v{C}ech 2-cocycle defining an element of the cohomology group ${{}H}^2(X,\, \mathcal{O}^*_X)$. Thus the bundles $\mathcal{E}_i$ together with the isomorphisms $h_{ij}$ define a \emph{twisted sheaf} on $\Tw(M)_X$ in the sense of C\u{a}ld\u{a}raru \cite{caldararu}, and it's not hard to verify that the $\mathcal{E}_i$ glue into an actual sheaf if and only if the element of ${{}H}^2(X, \mathcal{O}^*_X)$ defined by the collection $\left\{\theta_{ijk}\right\}$ is zero. But since $X$ is a curve, ${{}H}^2(X, \mathcal{O}^*_X) = 0$, as can be seen from the following portion of the long exact cohomology sequence of the exponential sheaf sequence of $X$:
\[
\ldots \,\longrightarrow\, {{}H}^2(X, \,\mathcal{O}_X) \,\longrightarrow\,
{{}H}^2(X,\, \mathcal{O}^*_X) \,\longrightarrow\, {{}H}^3(X, \,\mathbb{Z}) \,\longrightarrow\, \ldots
\]
This completes the proof.
\end{proof}

\section{Irreducible bundles on $\mathrm{Tw}(M)$ of composite rank} \label{sect:composite}

We note that Theorem \ref{thm:prime} gives some hope that the full converse of Theorem \ref{thm:result} might be 
true, in other words, that an arbitrary irreducible bundle $E$ on the twistor space $\Tw(M)$ of a compact simply 
connected hyperk\"ahler manifold $M$ is generically fibrewise stable. This, however, turns out not to be true, and 
in this section we will construct examples of irreducible but nowhere fibrewise stable bundles on $\Tw(M)$ of any 
composite rank. We will carry out the construction on the twistor space of a K3 surface.

Recall that a \emph{K3 surface} is a compact simply connected complex surface $M$ with trivial canonical bundle. A 
nonzero holomorphic section of the canonical bundle of $M$ is a holomorphic symplectic form, making it into a K\"ahler 
holomorphic symplectic manifold. As a consequence of Yau's theorem proving Calabi's conjecture, \cite{yau}, the
manifold $M$ admits a hyperk\"ahler structure.

The main result of this section follows.

\begin{theorem}\label{thm:composite}
Let $M$ be a K3 surface. There exist examples of irreducible but nowhere fibrewise stable bundles of any composite 
rank on its twistor space $\Tw(M)$.
\end{theorem}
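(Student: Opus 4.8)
The plan is to construct these examples by reversing the correspondence of Theorem \ref{thm:fib-sec} and Lemma \ref{thm:multi-fib}: rather than trying to build an irreducible bundle on $\Tw(M)$ directly, I would build a fibrewise stable bundle on a fibred product $\Tw(M)_X$ over a branched cover $f\,:\,X\,\longrightarrow\,\mathbb{CP}^1$, and then push it forward along $\varphi\,:\,\Tw(M)_X\,\longrightarrow\,\Tw(M)$ to obtain a bundle $E\,:=\,\varphi_*\mathcal{F}$ on $\Tw(M)$. If $f$ has degree $k$ (a chosen divisor of the desired composite rank), the pushforward of a rank-$m$ fibrewise stable bundle will have rank $km$, so by choosing $k$ and $m$ appropriately we can target any prescribed composite rank. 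The whole point is that $E$ should be irreducible (so that it illustrates the failure of the converse of Theorem \ref{thm:result}), but fibrewise non-stable everywhere: its restriction to each fibre $M_I$ will be a direct sum (over the points of $f^{-1}(I)$) of the fibrewise stable pieces, hence manifestly non-stable.

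The key steps, in order, are as follows. First I would fix a Mukai-dual twistor space $\widehat{Z}=\Tw(\widehat M)$ and exhibit a genuine \emph{multisection} $g\,:\,X\,\longrightarrow\,\widehat Z$ over $X$ that is not a union of honest sections and does not descend to a single twistor line over $\mathbb{CP}^1$; for a K3 surface $M$ the Mukai dual $\widehat M$ is smooth, which makes $\widehat Z$ well-behaved. By Lemma \ref{thm:multi-fib}, this multisection is realized by an actual fibrewise stable bundle $\mathcal{F}$ on $\Tw(M)_X$. Second, I would form $E=\varphi_*\mathcal{F}$ and compute its restriction to a generic fibre $M_I$: since $\varphi$ is a finite flat map of degree $k$ and $f^{-1}(I)=\{P_1,\dots,P_k\}$ generically consists of $k$ distinct points, the restriction $E_I$ splits as $\bigoplus_{j=1}^k \mathcal{F}_{P_j}$, a direct sum of the (stable, equal-rank) fibrewise pieces; each summand is a proper subsheaf, so $E_I$ is never stable. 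Third, and this is the crucial point, I would prove that $E$ is nonetheless \emph{irreducible} on $\Tw(M)$.

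The hard part will be establishing irreducibility of $E=\varphi_*\mathcal{F}$. The strategy I would pursue is to show that any nonzero proper subsheaf $\mathcal{S}\subset E$ of smaller rank would, after pulling back along $\varphi$ and using the adjunction $\Hom_{\Tw(M)}(\mathcal{S},\varphi_*\mathcal{F})=\Hom_{\Tw(M)_X}(\varphi^*\mathcal{S},\mathcal{F})$, force the sheaf-theoretic ``monodromy'' of the multisection $g$ to break up — i.e. it would correspond to $g$ factoring through a proper subcover of $X$ over $\mathbb{CP}^1$, or to the fibrewise pieces $\mathcal{F}_{P_j}$ being permuted reducibly by the deck/monodromy action of the cover $f$. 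I would therefore choose $f\,:\,X\,\longrightarrow\,\mathbb{CP}^1$ and the multisection $g$ so that the relevant monodromy group acts \emph{transitively} (ideally the full symmetric or a primitive permutation group) on the sheets, which blocks any nontrivial invariant sub-object and hence any destabilizing subsheaf of $E$. Concretely this means one must arrange that $g$ does not factor through any intermediate cover and that $X$ itself is connected with irreducible monodromy; verifying that no rank-lowering subsheaf survives this transitivity — translating the permutation-group condition into the non-existence of the subsheaf $\mathcal{S}$ via the Picard-group description of Proposition \ref{thm:picard-x} and the pushforward formula \eqref{pushforward} — is where the real work lies. Once irreducibility is secured, combining it with the everywhere-non-stable fibre behaviour from the second step gives exactly an irreducible, nowhere fibrewise stable bundle of the prescribed composite rank, completing the construction.
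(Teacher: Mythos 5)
Your plan is the paper's construction: push forward a fibrewise stable bundle from a degree-$d$ base change $\Tw(M)_X$, realized via a multisection of the Mukai dual twistor projection, and your three steps are the paper's three steps. However, the two steps that carry the actual content are left at the level of intention, and in the second one the mechanism you propose is not the one that works.

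First, you never produce the multisection $g\,:\,X\,\longrightarrow\,\widehat{Z}$ that does not come from a twistor line; you only say you would ``exhibit'' one. The paper obtains it by a deformation argument: the normal bundle of a twistor line in $\widehat{Z}$ is $\mathcal{O}_{\mathbb{CP}^1}(1)^{\oplus n}$, so $s^*{{}T}^{1,0}_{\widehat{\pi}}\,\cong\,\mathcal{O}_{\mathbb{CP}^1}(1)^{\oplus n}$, and pulling back along $f(z)=z^d$ gives $\mathcal{O}_{\mathbb{CP}^1}(d)^{\oplus n}$; since ${{}H}^0(\mathcal{O}_{\mathbb{CP}^1}(1))\longrightarrow {{}H}^0(\mathcal{O}_{\mathbb{CP}^1}(d))$ is not surjective for $d>1$, the map $\Sec(\widehat{\pi})\longrightarrow\Sec_X(\widehat{\pi})$ fails to be surjective on Zariski tangent spaces, and $s\circ f$ deforms to a multisection not factoring through $f$. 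Without some such argument the construction does not start. (You also omit the preliminary check that $\widehat{M}\neq\emptyset$, i.e.\ that $M_I$ carries stable rank-$r$ bundles with ${\rm SU}(2)$-invariant Chern classes; the paper uses Serre's construction together with Theorem \ref{thm:c1c2}.)

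Second, your irreducibility criterion --- choose $f$ and $g$ so that the monodromy acts transitively on the sheets, which ``blocks any invariant sub-object'' --- is not the right condition and, taken literally, is false. The cover $z\longmapsto z^d$ always has transitive (cyclic) monodromy, yet $\varphi_*\varphi^*V\cong V\otimes\varphi_*\mathcal{O}_{\Tw(M)_X}$ is always reducible, since $V$ splits off via the trace. What matters is that $g$ be \emph{injective}, so that over generic $I$ the restriction $E_I=E_1\oplus\cdots\oplus E_d$ is a sum of \emph{pairwise non-isomorphic} stable bundles of slope zero; it is this, via a degree/slope argument on the generic fibre, that forces any lower-rank subsheaf $G\subset E$ to restrict to a sub-sum $E_{i_1}\oplus\cdots\oplus E_{i_t}$ for a uniquely determined subset of sheets, and then the local constancy of that subset over $\mathbb{CP}^1$ minus the branch locus contradicts the connectedness of $X$. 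Injectivity of $g$ is also where primality of $d$ enters: a degree-$d$ multisection with $d$ prime either is injective or factors through a twistor line, so ``does not factor'' suffices, whereas for composite degree you would have to exclude intermediate factorizations, which the deformation of $s\circ f$ does not obviously do. You do list ``does not factor through any intermediate cover'' among your hypotheses, but you never draw from it the non-isomorphism of the fibre pieces, and that inference --- not transitivity of the deck action --- is the load-bearing step.
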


Before going ahead with the proof, we give a concise overview of the argument. The construction will be carried out 
in the following three steps:

\begin{enumerate}
\item[1.] Given any composite number, we write it as a product $dr$, where $d$ is prime. We choose a topological 
complex vector bundle $B$ on $M$ of rank $r$ that admits stable structures in every induced complex structure of 
$M$, so that the corresponding Mukai dual variety $\widehat{M}$ is nonempty.

\item[2.] We choose a branched cover of $\mathbb{CP}^1$ by a smooth curve $X$, $f \,:\, X \,\longrightarrow\, 
\mathbb{CP}^1$, in such a way that the Mukai dual twistor projection $\widehat{\pi} \,:\, \widehat{Z} \,=\, 
\Tw(\widehat{M}) \,\longrightarrow\,
 \mathbb{CP}^1$ admits a multisection over $X$ which does not come from a twistor line in $\widehat{Z}$. Applying Lemma \ref{thm:multi-fib}, this gives rise to a fibrewise stable bundle $F$ on the fibred product $\Tw(M)_X$, as in the diagram
\[
\xymatrix{\Tw(M)_X \ar[r]^-\varphi \ar[d]_{\pi_X} & \Tw(M) \ar[d]^\pi \\ X \ar[r]^f & \mathbb{CP}^1}
\]
The bundle $F$ will have the property that for generic $I\,\in\, \mathbb{CP}^1$, the restrictions of $F$ to the fibres 
of $\pi_X$ corresponding to distinct elements in $f^{-1}(I)$ are nonisomorphic as bundles on $M_I$.

\item[3.] Taking the pushforward $E \,:= \,\varphi_* F$ by the map $\varphi$ in the diagram above, we show that $E$ 
is an irreducible bundle on $\Tw(M)$ of rank $dr$ which is nowhere fibrewise stable.
\end{enumerate}

\begin{proof}[Proof of Theorem \ref{thm:composite}]
Take any composite number, and write it as a product $dr$, where $d$ is some prime number. We fix once and for the rest of the proof a branched cover $f \,:\, X \,\longrightarrow\,
 \mathbb{CP}^1$ as follows. Let $X \,=\, \mathbb{CP}^1$, and choose any local coordinate $z$ about any point in $\mathbb{CP}^1$. The map $f \,:\, X \,=\, \mathbb{CP}^1 \,\longrightarrow\,
 \mathbb{CP}^1$ is given by $f(z) \,= \,z^d$.

\subsection*{Step 1} We first choose a topological complex vector bundle on $M$ of rank $r$ that admits stable 
holomorphic structures in every induced complex structure of $M$. Fix $I\,\in\, \mathbb{CP}^1$. Using Serre's 
construction, one can show that there exists a stable bundle of rank $r$ on $M_I$ with first Chern class zero (see 
Theorem 5.1.6 in \cite{huyb-lehn}). By Theorem \ref{thm:c1c2}, we know that such bundle comes from a unique 
hyperholomorphic connection $\nabla$ on its underlying topological bundle $B$. Since hyperholomorphic connections 
are Yang-Mills (Theorem 2.3 in \cite{verbit1}), $\nabla$ gives rise to stable holomorphic structures on $B$ in all 
other induced complex structures of $M$. It follows from this that the Mukai dual variety $\widehat{M}$ associated 
to $B$ is nonempty. As was mentioned in the previous section, $\widehat{M}$ is smooth since $M$ is a surface, and 
is thus a (noncompact) hyperk\"ahler manifold. As in the previous section, let $\widehat{Z}\,=\, \Tw(\widehat{M})$ 
denote the twistor space of $\widehat{M}$, and $\widehat{\pi} \,:\, \widehat{Z} \,\longrightarrow\, \mathbb{CP}^1$ 
its holomorphic twistor projection.

\subsection*{Step 2} Recall that sections of the twistor projection $\widehat{\pi}\, :\, \widehat{Z}
\,\longrightarrow\,
 \mathbb{CP}^1$ are called twistor lines, and the set of twistor lines in $\widehat{Z}$ is denoted by
$\Sec(\widehat{\pi})$. We will also be interested in the multisections of $\widehat{\pi} \,:\,
\widehat{Z}\,\longrightarrow\,
\mathbb{CP}^1$ over $f \,:\, X\,\longrightarrow\,
\mathbb{CP}^1$, and the set of such multisections will be denoted by $\Sec_X(\widehat{\pi})$. Viewed as Douady
spaces of morphisms, both $\Sec(\widehat{\pi})$ and $\Sec_X(\widehat{\pi})$ have a complex analytic structure, and
composition with $f \,:\, X\,\longrightarrow\, \mathbb{CP}^1$ induces an analytic map
\begin{equation} \label{secheniya}
\begin{array}{ccc}
\Sec(\widehat{\pi}) & \longrightarrow & \Sec_X(\widehat{\pi}) \\
\left[s\right] & \longmapsto & \left[s \circ f\right].
\end{array}
\end{equation}
We would like to show that this map is not surjective. We do this by examining the induced map of Zariski tangent spaces, which represent infinitesimal deformations of morphisms.

Fix a twistor line $s \,:\, \mathbb{CP}^1\,\longrightarrow\, \widehat{Z}$ (e.g. a horizontal twistor line). We have the following natural short exact sequence of holomorphic bundles on $\widehat{Z}$:
\[ 
0\, \longrightarrow\, {{}T}^{1,0}_{\widehat{\pi}}\, \longrightarrow\, {{}T}^{1,0}{ \widehat{Z}} \, \stackrel{d\widehat{\pi}}{\longrightarrow}\,
\widehat{\pi}^*{{}T}^{1,0}{{\mathbb C}{\mathbb P}^1} \, \longrightarrow\, 0\, .
\]
Here ${{}T}^{1,0}_{\widehat{\pi}}$ is the relative holomorphic tangent bundle for the projection
$\widehat{\pi}$, and $d\widehat{\pi}$ is the differential of $\widehat{\pi}$. Pulling this sequence back to
$\mathbb{CP}^1$ via $s \,:\, \mathbb{CP}^1\,\longrightarrow\, \widehat{Z}$, we get:
\begin{equation} \label{relative}
0\, \longrightarrow\, s^*{{}T}^{1,0}_{\widehat{\pi}}\, \longrightarrow\, s^*{{}T}^{1,0}{ \widehat{Z}} \, \stackrel{s^*d\widehat{\pi}}{\longrightarrow}\,
{{}T}^{1,0}{{\mathbb C}{\mathbb P}^1} \, \longrightarrow\, 0\, .
\end{equation}
The Zariski tangent space of $\Sec(\widehat{\pi})$ at $\left[s\right]$ can be identified with the space of global sections of the bundle $s^*{{}T}^{1,0}_{\widehat{\pi}}$,
\[
T_{\left[s\right]} \Sec(\widehat{\pi}) \,\cong\, {{}H}^0\left(\mathbb{CP}^1,\, s^*{{}T}^{1,0}_{\widehat{\pi}}\right)
\]
(see Section 2.3 in \cite{debarre}), and similarly,
\[
T_{\left[s \circ f\right]} \Sec_X(\widehat{\pi}) \,\cong\, {{}H}^0\left(X,\, f^*\left(s^*{{}T}^{1,0}_{\widehat{\pi}}\right)\right).
\]
We now describe the structure of the vector bundle $s^*{{}T}^{1,0}_{\widehat{\pi}}$ on $\mathbb{CP}^1$. Since $T\mathbb{CP}^1 \cong \mathcal{O}_{\mathbb{CP}^1}(2)$ and the normal bundle of the twistor line $s \,: \,\mathbb{CP}^1 \,\longrightarrow\,
 \widehat{Z}$ is isomorphic to $\mathcal{O}_{\mathbb{CP}^1}(1)^{\oplus n}$, where $n$ is the complex dimension of $\widehat{M}$ (see Theorem 1(2) in \cite{hitchin2}), the sequence \eqref{relative} has the form
\[
0\, \longrightarrow\, s^*{{}T}^{1,0}_{\widehat{\pi}}\, \longrightarrow\, \mathcal{O}_{\mathbb{CP}^1}(2) \oplus \mathcal{O}_{\mathbb{CP}^1}(1)^{\oplus n} \, \longrightarrow\,
\mathcal{O}_{\mathbb{CP}^1}(2) \, \longrightarrow\, 0\, .
\]
Here the $\mathcal{O}_{\mathbb{CP}^1}(2)$ term in the middle gets mapped identically to $\mathcal{O}_{\mathbb{CP}^1}(2)$ on the right. It follows from this that $s^*{{}T}^{1,0}_{\widehat{\pi}}
\,\cong\, \mathcal{O}_{\mathbb{CP}^1}(1)^{\oplus n}$, and recalling that $X \,=\, \mathbb{CP}^1$ and that $f \,:\, X
\,\longrightarrow\,
 \mathbb{CP}^1$ is a degree $d$ map, we have $f^*\left(s^*{{}T}^{1,0}_{\widehat{\pi}}\right)
\,\cong\, \mathcal{O}_{\mathbb{CP}^1}(d)^{\oplus n}$. In view of this, the map of Zariski tangent spaces
\[
T_{\left[s\right]} \Sec(\widehat{\pi})\,\longrightarrow\, T_{\left[s \circ f\right]} \Sec_X(\widehat{\pi})
\]
induced by the morphism \eqref{secheniya} takes the form
\[
{{}H}^0\left(\mathbb{CP}^1,\, \mathcal{O}_{\mathbb{CP}^1}(1)^{\oplus n}\right) \,\longrightarrow
\, {{}H}^0\left(X, \,\mathcal{O}_{\mathbb{CP}^1}(d)^{\oplus n}\right)\, .
\]
Since $d \,>\, 1$, we see that this map cannot be surjective.

It follows from this that the multisection $s \circ f \,:\, X\,\longrightarrow\,
 \widehat{Z}$ can be deformed to a multisection $g \,:\, X \,\longrightarrow\,
 \widehat{Z}$, as in the diagram
\[
\xymatrix{ & \widehat{Z} \ar[d]^{\widehat{\pi}} \\ X \ar[ur]^g \ar[r]_f & \mathbb{CP}^1, }
\]
that does not factor through $f$, that is, does not come from from a twistor line in $\widehat{Z}$. Since the degree of $f$ is a prime number $d$, it follows that the map $g$ must be injective. Applying Lemma \ref{thm:multi-fib} to $g$, we get a fibrewise stable bundle $F$ of rank $r$ on the fibred product $\Tw(M)_X$, as in the diagram
\begin{equation} \label{diagramma3}
\xymatrix{\Tw(M)_X \ar[r]^-\varphi \ar[d]_{\pi_X} & \Tw(M) \ar[d]^\pi \\ X \ar[r]^f & \mathbb{CP}^1.}
\end{equation}
By choice of $g$, the bundle $F$ will have the property that for generic $I \in \mathbb{CP}^1$, the restrictions of $F$ to the fibres of $\pi_X$ corresponding to distinct elements in $f^{-1}(I)$ are nonisomorphic stable bundles on $M_I$ of degree zero.

\subsection*{Step 3} Let $E \,=\, \varphi_*F$ be the pushforward of $F$ by $\varphi$ in the diagram 
\eqref{diagramma3}. First, observe that $E$ is locally free. Being a local statement on $\Tw(M)$, this follows from 
the fact that $\varphi_* \mathcal{O}_{\Tw(M)_X}$ is locally free, which itself follows from the fact that $\varphi$ 
is proper and finite. Thus, $E$ is a vector bundle, and it has rank $dr$, since the degree of the map $f \,:\, 
X\,\longrightarrow\,\mathbb{CP}^1$ is $d$, and the rank of $F$ is $r$. Second, the vector bundle $E$ on $\Tw(M)$ 
is nowhere fibrewise stable. Indeed, for any $I\,\in\,\mathbb{CP}^1$ outside the branch locus of $f$, we know from 
the construction of $F$ that the restriction $E_I \,=\, \left.E\right|_{\pi^{-1}(I)}$ decomposes as
\begin{equation} \label{razlozh}
E_I \,\cong\, E_1 \oplus \cdots\oplus E_d\, ,
\end{equation}
where the $E_i$ correspond to restrictions of $F$ over points in $f^{-1}(I)$, and are (nonisomorphic) stable bundles on $M_I$ of rank $r$ and degree zero. This shows that $E_I$ is non-stable for all $I$ outside a finite subset, and since non-stability is a closed property, it follows that the same must be true for all $I \in \mathbb{CP}^1$. We emphasize that, unlike $E_I$, the bundles $E_i$ in the decomposition \eqref{razlozh} live only on $M_I$ and cannot be extended to the whole $\Tw(M)$.

It remains to show that $E$ is irreducible as a bundle on $\Tw(M)$. Let $G\,\subset\, E$ be any subsheaf of lower 
rank; we can assume that $G$ is reflexive. Notice that the restriction $G_I$ of $G$ to any fibre $\pi^{-1}(I) 
\,\subset\, \Tw(M)$ has degree zero. This is certainly true for generic $I$ in the sense of Definition 
\ref{def:generic}, hence by continuity it is true for all $I\,\in\, \mathbb{CP}^1$. Recalling that $\dim_{\mathbb{C}} 
\Tw(M) \,=\, 3$ since $M$ is a surface, and that for reflexive sheaves the codimension of the
singular locus is at least three 
(Lemma 1.1.10 in Chapter 2 of \cite{okonek}), we see that $G$ is a vector bundle outside a finite subset of 
$\Tw(M)$. Let
\[
\Delta \,=\, \textrm{ Branch locus of } f\ \cup \textrm{ Singularity set of } G.
\]
Fix $I\,\in\, \mathbb{CP}^1 \setminus \Delta$. The restriction of the sheaf inclusion $G\,\subset
\,E$ to the fibre $\pi^{-1}(I)\, =\, M_I$ is a sheaf monomorphism
\begin{equation} \label{vlozh}
G_I \,\longhookrightarrow\, E_1 \oplus \cdots \oplus E_d\, ,
\end{equation}
where we have used the decomposition \eqref{razlozh}. Notice that both $G_I$ and the $E_i$ are vector bundles here.

We claim that there exists a choice of a subset $\left\{i_1,\, \cdots,\, i_t \right\}
\,\subseteq\, \left\{1,\, \cdots,\, d\right\}$ such that the composition of \eqref{vlozh} with the corresponding projection,
\[
G_I \,\longrightarrow\, E_1 \oplus \cdots \oplus E_d\, \longrightarrow \,E_{i_1} \oplus \cdots \oplus E_{i_t}\, ,
\]
is generically an isomorphism.

To prove the above claim, let $1\,\le\,j_1\,\le\,d$ be arbitrary, and look at the composition
\[
G_I \longrightarrow E_1 \oplus \cdots \oplus E_d \longrightarrow \bigoplus_{l \ne j_1} E_l
\,=\, E_1 \oplus \cdots \oplus \widehat{E_{j_1}} \oplus \cdots \oplus E_d.
\]
Let $K_{j_1}$ denote the kernel of this composition. We have the following diagram with exact rows:
\[
\xymatrix{0 \ar[r] & *+<1pc>{K_{j_1}} \ar[r] \ar@{-->}[d] & *+<1pc>{G_I} \ar[r] \ar@{^{(}->}[d]^{\gamma} & *+<1pc>{\bigoplus_{l \ne j_1} E_l} \ar@{=}[d] & \\ 0 \ar[r] & E_{j_1} \ar[r] & E_1 \oplus \cdots \oplus E_d \ar[r] & \bigoplus_{l \ne j_1} E_l \ar[r] &0}
\]
Just like $G_I$, the sheaf $K_{j_1}$ has degree zero. Indeed, if we had $\deg K_{j_1}\,>\, 0$, then the
composition $K_{j_1} \,\subset\, G_I \,\subset\, E_1\bigoplus\cdots\bigoplus E_d$ would destabilize
the polystable vector bundle $E_1 \,\bigoplus\, \cdots\, \bigoplus\, E_d$, while if $\deg K_{j_1} \,<\, 0$, the fact
that $\deg G_I\,=\, 0$ would imply that the image of $G_I$ under the rightmost map in the first row of the
above diagram would have positive degree, thus destabilizing $\bigoplus_{l \ne j_1} E_l$. Since $E_{j_1}$ is
stable and also has degree zero, the condition $\deg K_{j_1} \,=\, 0$ implies that the left-most vertical arrow
in the above diagram is either zero or generically an isomorphism. If the latter is true for every $j_1$ from 1 to
$d$, then $\rk G_I\,=\, \rk \left[E_1 \bigoplus \cdots \bigoplus E_d\right]$, but this cannot be as $G$
was chosen to be a subsheaf of $E$ on $\Tw(M)$ of lower rank. Fixing an index $j_1$ such that
$K_{j_1} \,=\, 0$, the composition
\[
G_I \,\longrightarrow\, E_1 \oplus \cdots \oplus E_d \,\longrightarrow\, \bigoplus_{l \ne j_1} E_l
\]
must be a monomorphism. If $\rk G_I\,=\,\rk \bigoplus_{l \ne j_1} E_l$, we stop here. If not, we repeat the argument above with $\left\{1, \,\cdots,\, d\right\}$ replaced by $\left\{1,\, \cdots,\, \widehat{j_1},\, \cdots,\, d\right\}$ to conclude the existence of an index $j_2 \in \left\{1, \,\cdots,\, \widehat{j_1}, \,\cdots,\, d\right\}$ such that the composition
\[
G_I \,\longrightarrow\, \bigoplus_{l \ne j_1} E_l \,\longrightarrow \,\bigoplus_{l \ne j_1, j_2} E_l
\]
is still a monomorphism. At a certain point, after having chosen some indices $j_1,\, j_2,\, \cdots,\, j_s$ in
this manner, and letting $i_1 , \,i_2,\, \cdots,\, i_t$ denote the other indices, we will arrive at a
monomorphism $G_I \,\longhookrightarrow\, E_{i_1} \bigoplus \cdots \bigoplus E_{i_t}$ with $\rk G_I \,=\,
\rk \left[E_{i_1} \bigoplus \cdots \bigoplus E_{i_t}\right]$.

If $t \,=\, 0$, then $G_I \,=\, 0$, and so $G\, =\, 0$. If this happens for an arbitrary choice of a subsheaf $G \subset 
E$, then clearly $E$ must be irreducible. Assume for contradiction that $t\,>\, 0$.

We thus have that for some subset $\left\{i_1,\, \cdots,\, i_t \right\} \,\subseteq \,
\left\{1,\, \cdots,\, d\right\}$, the map
\begin{equation} \label{vlozh-proj}
G_I\, \longrightarrow\, E_{i_1} \oplus \cdots \oplus E_{i_t}
\end{equation}
obtained as the composition of \eqref{vlozh} with the corresponding projection is a monomorphism
of sheaves with the property that $\rk G_I\, =\, \rk \left[E_{i_1} \bigoplus \cdots \bigoplus E_{i_t}\right]$. But
since also $\deg G_I \,=\, \deg \left[E_{i_1} \bigoplus \cdots \bigoplus E_{i_t}\right]\,=\, 0$, it must be that the
corresponding map of line bundles
\[
\det G_I\,\longrightarrow\, \det \left(E_{i_1} \oplus \cdots \oplus E_{i_t}\right)
\]
is an isomorphism. Recalling that both $G_I$ and $E_{i_1}\bigoplus\cdots\bigoplus E_{i_t}$ are vector bundles, it
follows that \eqref{vlozh-proj} must also be an epimorphism, and hence an isomorphism.
This proves the claim.

Identifying $G_I$ with $E_{i_1}
\bigoplus\cdots\bigoplus E_{i_t}$ on $M_I$ in the morphism \eqref{vlozh}, we also have that for any $j$ in $\left\{1, \,
\cdots,\, d\right\} \setminus \left\{i_1,\, \cdots,\, i_t\right\}$, the composition
\[
G_I\,\cong\, E_{i_1} \oplus \cdots \oplus E_{i_t}\,\longrightarrow\, E_1 \oplus \cdots \oplus E_d\,\longrightarrow\,E_j
\]
is zero. This follows from the fact that by construction, the bundles $E_1, \,\cdots,\, E_d$ are all stable
on $M_I$ of the same rank and degree, and are pairwise nonisomorphic.

It follows from all this that the choice of a subset $\left\{i_1,\,\cdots,\, i_t \right\}\,\subseteq\,
\left\{1,\, \cdots, \,d\right\}$ as described above is uniquely determined by the morphism \eqref{vlozh}. Moreover, it's not hard to see 
that in a neighborhood $U$ of $I$ in $\mathbb{CP}^1 \setminus \Delta$ which is evenly covered by the map $f \,:\, X 
\,\longrightarrow\,\mathbb{CP}^1$, carrying out the same procedure for every other point in $U$ yields the same 
choice of subset of $\left\{1,\,\cdots,\, d\right\}$. Thus, a nonzero subsheaf $G \subset E$ gives a consistent choice 
of $t$ sheets of the covering $f \,:\, X \,\longrightarrow\, \mathbb{CP}^1$ for every point in the nonempty Zariski 
open set $\mathbb{CP}^1 \setminus \Delta$, which contradicts the fact that $X\,=\, \mathbb{CP}^1$ is connected. We 
must have that the only subsheaf of $E$ of lower rank on $\Tw(M)$ is the zero subsheaf, i.e., $E$ is irreducible.
\end{proof}

\section*{Acknowledgements}

The authors would like to thank Ajneet Dhillon, Jacques Hurtubise and Misha Verbitsky for the many valuable 
discussions and suggestions. The study of the second author has been funded within the framework of the HSE 
University Basic Research Program and the Russian Academic Excellence Project '5-100'. The first author is 
supported by a J. C. Bose Fellowship.


\begin{thebibliography}{ZZZZZ}

\bibitem[ABT]{ABT} M. Aprodu, V. Br\^inz\u{a}nescu and M. Toma, Holomorphic vector bundles on primary
Kodaira surfaces, \emph{Math. Zeit.} \textbf{242} (2002), 63--73.

\bibitem[At]{At} M. F. Atiyah, Complex analytic connections in fibre bundles, {\it Trans.
Amer. Math. Soc.} {\bf 85} (1957), 181--207.

\bibitem[AHS]{AHS} M. F. Atiyah, N. Hitchin and I. Singer, Self-duality in four-dimensional 
Riemannian geometry, \emph{Proc. Roy. Soc. London Ser. A} \textbf{362} (1978), 425--461.

\bibitem[BS]{banica} C. B\u{a}nic\u{a}, O. St\u{a}n\u{a}\c{s}il\u{a}, \emph{Algebraic methods in the 
global theory of complex spaces}, John Wiley \& Sons (1976).

\bibitem[BM]{brin-mor} V. Br\^inz\u{a}nescu and R. Moraru, Holomorphic rank-2 vector bundles on 
non-K\"ahler elliptic surfaces, \emph{Ann. Inst. Fourier} \textbf{55} (2005),
1659--1683.

\bibitem[Ca]{caldararu} A. C\u{a}ld\u{a}raru, Derived categories of twisted sheaves on Calabi-Yau 
manifolds, PhD thesis, Cornell University (2000).

\bibitem[De]{debarre} O. Debarre, \emph{Higher-Dimensional Algebraic Geometry}, Springer Verlag, 
Universitext (2001).

\bibitem[GR]{grauert-remmert} H. Grauert and R. Remmert, \emph{Coherent Analytic Sheaves}, 
Springer Verlag, Berlin (1984).

\bibitem[Hit]{hitchin2} N. J. Hitchin, Hyperk\"ahler manifolds, \emph{Ast\'erisque}
\textbf{206} (1992), 137--166.

\bibitem[HL]{huyb-lehn} D. Huybrechts and M. Lehn, \emph{The geometry of moduli spaces of 
sheaves}, Cambridge University Press, (2010).

\bibitem[KV]{kaled-verbit} D. Kaledin and M. Verbitsky, Non-Hermitian Yang-Mills connections, 
\emph{Selecta Math.} \textbf{4} (1998), 279--320.

\bibitem[Ko]{Ko} S. Kobayashi, \textit{Differential geometry of complex vector bundles},
Publications of the Mathematical Society of Japan, {\bf 15}. Kan\^o Memorial Lectures, 5.
Princeton University Press, Princeton, NJ, 1987.

\bibitem[Ma]{Ma} M. Maruyama, Openness of a family of torsion free sheaves, {\it J. Math. Kyoto Univ.}
\textbf{16} (1976), 627--637.

\bibitem[Mu]{mukai} S. Mukai, Symplectic structure of the moduli space of sheaves on abelian or 
K3 surfaces, \emph{Invent. Math.} \textbf{77} (1984), 101--116.

\bibitem[OSS]{okonek} C. Okonek, M. Schneider and H. Spindler, \emph{Vector bundles on complex 
projective spaces}, Progress in Mathematics vol. 3, Birkh\"auser, (1980).

\bibitem[Pe]{penrose} R. Penrose, Twistor algebra, \emph{Jour. Math. Phys.} \textbf{8}
(1967), 345--366.

\bibitem[Sa]{sal} S. Salamon, Quaternionic K\"ahler manifolds, \emph{Inv. Math.} \textbf{67}
(1982), 143--171.

\bibitem[TT]{teleman-toma} A. Teleman and M. Toma, Holomorphic vector bundles on non-algebraic surfaces,
\emph{Com. Ren. Acad. Sci. Paris} \textbf{334} (2002), 1--6.

\bibitem[Toma]{toma} M. Toma, Stable bundle with small $c_2$ over 2-dimensional
complex tori, \emph{Math. Zeit.} \textbf{232} (1999), 511--525.

\bibitem[Tomb1]{tomberg1} A. Tomberg, On the Geometry of Twistor Spaces of Hypercomplex and Hyperk\"ahler Manifolds, PhD thesis, McGill University (2018).

\bibitem[Tomb2]{tomberg2} A. Tomberg, Example of a Stable but
Fiberwise Nonstable Bundle on the Twistor Space of a Hyper-K\"ahler
Manifold, \emph{Math. Notes} \textbf{105} no. 6 (2019), 941--945.

\bibitem[Tomb3]{tomberg3} A. Tomberg, Families of stable bundles on the fibres of the 
hyperk\"ahler twistor projection, arXiv:1908.05333 [math.AG] (2019).

\bibitem[Ve1]{verbit2} M. Verbitsky, Hyperk\"ahler embeddings and holomorphic symplectic
geometry II, \emph{Geom. Funct. Anal.} \textbf{5} (1995), 92--104.

\bibitem[Ve2]{verbit1} M. Verbitsky, Hyperholomorphic bundles over a hyperk\"ahler manifold, 
\emph{Joun. Alg. Geom.} \textbf{5} (1996), 633--669.

\bibitem[Ya]{yau} S.-T. Yau, On the Ricci curvature of a compact K\"ahler manifold and the 
complex Monge-Amp\`ere equation I, \emph{Comm. on Pure and Appl. Math.} \textbf{31} (1978), 
339--411.

\end{thebibliography}
\end{document}